\documentclass[12pt, twoside]{article}
\usepackage{amsmath,amsthm,amssymb}
\usepackage{times}
\usepackage{enumerate}
\usepackage{bm}
\usepackage[all]{xy}
\usepackage{mathrsfs}
\usepackage{amscd}
\usepackage{color}
\pagestyle{myheadings}
\def\titlerunning#1{\gdef\titrun{#1}}
\makeatletter
\def\author#1{\gdef\autrun{\def\and{\unskip, }#1}\gdef\@author{#1}}
\def\address#1{{\def\and{\\\hspace*{18pt}}\renewcommand{\thefootnote}{}%
\footnote {#1}}%
\markboth{\autrun}{\titrun}}
\makeatother
\def\email#1{e-mail: #1}

\newtheorem{theorem}{Theorem}[section]
\newtheorem{corollary}[theorem]{Corollary}
\newtheorem{lemma}[theorem]{Lemma}
\newtheorem{proposition}[theorem]{Proposition}
\theoremstyle{definition}
\newtheorem{definition}[theorem]{Definition}
\newtheorem{remark}[theorem]{Remark}
\numberwithin{equation}{section}

\frenchspacing

\textwidth=17.6cm
\textheight=23cm
\parindent=16pt
\oddsidemargin=-0.5cm
\evensidemargin=-0.5cm
\topmargin=-0.5cm

\xyoption{all}

\def \N {\mathbb{N}}
\def \C {\mathbb{C}}

\def \a {\alpha }
\def \b {\beta}

\def \de {\delta}
\def \De {\Delta}
\def \la {\lambda}
\def \La {\Lambda}
\def\w {\omega}
\def\Om{\Omega}

\def\pa{\partial}
\def\na {\nabla}
\def\Ga{\Gamma}

\begin{document}
\baselineskip=17pt

\titlerunning{On Yang-Mills connections on compact K\"{a}hler surfaces}
\title{On Yang-Mills connections on compact K\"{a}hler surfaces}
\author{Teng Huang}
\date{}
\maketitle
\address{T. Huang: School of Mathematical Sciences, University of Science and Technology of China; Key Laboratory of Wu Wen-Tsun Mathematics, Chinese Academy of Sciences, Hefei, Anhui 230026, PR China;  \email{htmath@ustc.edu.cn; htustc@gmail.com}}

\begin{abstract}
We extend an $L^{2}$-energy gap of Yang-Mills connections on principal $G$-bundles $P$ over a compact Riemannian manfold with a $good$ Riemannian metric \cite{Feehan}  to the case of a compact K\"{a}hler surface with a $generic$ K\"{a}hler metric $g$, which guarantees that all ASD connections on the principal bundle $P$ over $X$ are irreducible.  
\end{abstract}
\section{Introduction}
Let $G$ be a compact, semisimple Lie group and $P$ be a principal $G$-bundle over a closed, smooth, Riemannian manifold with Riemannian metric $g$. Suppose that $A$ is a connection on $P$ and its curvature denote by $F_{A}\in\Om^{2}(X)\otimes\mathfrak{g}_{P}$. Here $\Om^{k}:=\Om^{k}(T^{\ast}X)$ and $\mathfrak{g}_{P}$ is the real vector bundle associated to $P$ by the adjoint representation of $G$ on its Lie algebra $\mathfrak{g}$. We define the Yang-Mills energy function
by $$YM(A):=\int_{X}|F_{A}|^{2}dvol_{g},$$
the fiber metric defined through the Killing form on $\mathfrak{g}$, see \cite[Section 2]{Feehan}. Then energy functional $YM(A)$ is gauge-invariant and thus descends to a function on the quotient space $\mathcal{B}(P,g):=\mathcal{A}_{P}/\mathcal{G}_{P}$, of the affine space $\mathcal{A}_{P}$ of connections on $P$ moduli the gauge transformation. A connection $A$ is called Yang-Mills connection when it gives a critical point of the Yang-Mills functional, that is, it satisfies the Yang-Mills equation
$$d^{\ast}_{A}F_{A}=0.$$
From the Bianchi identity $d_{A}F_{A}=0$, a Yang-Mills connection is nothing but a connection whose curvature is harmonic with respect to the covariant exterior derivative $d_{A}$. 

Over a $4$-dimensional Riemannnian manifold, $F_{A}$ is decomposed into its self-dual and anti-self-dual components,
$$ F_{A}=F^{+}_{A}+F^{-}_{A}$$
where $F^{\pm}_{A}$ denotes the projection onto the $\pm1$ eigenspace of the Hodge star operator. A connection is called self-dual (respectively anti-self-dual) if $F_{A}=F^{+}_{A}$ (respectively $F_{A}=F^{-}_{A}$). A connection is called an instanton if it is either self-dual or anti-self-dual. On compact oriented $4$-manifolds, an instanton is always an absolute minimizer of the Yang-Mills energy \cite{Taubes1982,Taubes1984}. Not all Yang-Mills connections are instantons, in \cite{SS,SSU}, the authors given some examples for the $SU(2)$ Yang-Mills connections on $S^{4}$ which are neither self-dual nor anti-self-dual. 

It is a natural question whether or not there is a positive uniform gap between the energy $YM(A)$ of points $[A]$ in the stratum $$M(P,g):=\{[A]\in\mathcal{B}(P,g): F_{A}^{+}=0\},$$ of absolute minimal of $YM(A)$ on $\mathcal{B}(P,g)$ and energies of points in the strata in $\mathcal{B}(P,g)$ of non-minimal critical points. 

In \cite{BL,BLS}, Bourguignon-Lawson proved that if $A$ is a Yang-Mills on a principal  $G$-bundle  over $S^{4}$ with its standard round metric of radius one such that $\|F_{A}^{+}\|_{L^{\infty}(X)}<\sqrt{3}$, then $A$ is anti-self-dual. The result was significantly improved by Min-Oo \cite{Mi} and Parker \cite{Parker}, by replacing the preceding $L^{\infty}$ condition with an $L^{2}$-energy condition, $\|F_{A}^{+}\|_{L^{2}(X)}\leq\varepsilon$, where $\varepsilon=\varepsilon(g)$ is a small enough constant and by assume $X$ to be a closed, smooth, four-dimensional manifold endowed with a $positive$ Riemannian metric $g$. In \cite{Feehan}, Feehan extend the  $L^{2}$-energy gap result  from the case of positive Riemannian metrics \cite{Mi,Parker} to the more general case of $good$ Riemannian metrics.  The key step in the proof of Feehan's \cite{Feehan} Theorem 1 is to derive an uniform positive lower bound for the lower eigenvalue of the operator $d_{A}^{+}d_{A}^{+,\ast}$ with respect to the connection $A$, the curvature $F_{A}$ obeying $\|F_{A}^{+}\|_{L^{2}(X)}\leq\varepsilon$ for a suitable small constant $\varepsilon=\varepsilon(g)$. In \cite{GKS}, the authors showed a sharp, conformally invariant improvement of these gap theorems which is nontrivial when the Yamabe invariant $Y([g])$ of $(X,g)$ is positive.

Denote by $\mathcal{A}_{YM}$ the space of Yang-Mills connections and $\mathcal{A}_{HYM}$ the space of connections whose curvature satisfies $$\sqrt{-1}\La_{\w}F_{A}=\la Id,$$
where $\la=\frac{2\pi deg P}{rank(P)vol(X)}$. There spaces are gauge invariant with respect to the group $\mathcal{G}_{P}$ of gauge transformations. In \cite{Huang}, the author proved that an open subset $$W=\{[A]:\|\sqrt{-1}\La_{\w}F_{A}-\la Id\|_{L^{2}(X)}<\de\}$$ in the orbit space $\mathcal{A}_{P}/\mathcal{G}_{P}$ of connections with property $\mathcal{A}_{HYM}/\mathcal{G}_{P}=W\cap\mathcal{A}_{YM}/\mathcal{G}_{P}$ under the scalar curvature $S$ of the metric is positive. 

Now if we suppose the base manifold $X$ is a K\"{a}hler surface, $P$ is a principal $SU(N)$-bundle over $X$. An ASD connection $A$ on $P$ naturally induces the Yang-Mills complex
$$\Om^{0}(X,\mathfrak{g}_{P})\xrightarrow{d^{0}=d_{A}}\Om^{1}(X,\mathfrak{g}_{P})\xrightarrow{d^{1}=d^{+}_{A}}{\Om^{2,+}(X,\mathfrak{g}_{P})}.$$
The $i$-th cohomology group $H^{i}_{A}:=Ker d^{i}/ Im d^{i-1}$ of this complex if finite dimensional and the index $d=h^{0}-h^{1}+h^{2}$ ($h^{i}=dim H^{i}_{A}$ )is given by
$c(G)\kappa(P)-dim G(1-b_{1}+b^{+})$. Here $c(G)$ is a normalising constan t, $\kappa(P)$ is a characteristic number of $P$ obtained by evaluating a $4$-dimensional characteristic class on the fundamental cycle $[X]$, $b_{1}$ is the first Betti number of $X$ and $b^{+}$ is the rank of a maximal positive subspace for the intersection form on $H^{2}(X)$. $H^{0}_{A}$ is the Lie algebra of the stabilizer $\Ga_{A}$, the group of gauge transformation of $P$ fixing by $A$. From \cite[Proposition 2.3]{Itoh}  or \cite[Chapter IV]{Friedman-Morgan}, the second cohomology $H^{2}_{A}$ is $\mathbf{R}$-isomorphic to $H^{0}_{A}\oplus\mathbf{H}$, where $\mathbf{H}:=\ker(\bar{\pa}_{A}^{\ast})|_{\Om^{0,2}(X,\mathfrak{g}_{P}^{\C})}$
It's difficult to addition certain mild conditions to ensure  $\mathbf{H}$ and $H^{0}_{A}$ vanish at some time. The K\"{a}hler metric $g$ often could not be  $good$. But one can see that $H^{0}_{A}=0$ is equivalent to the connection $A$ is irreducible.   

In \cite{Feehan}, the author shown that if the close $4$-manifold $X$ admits a $good$ metric $g$, then the connection $A\in\mathcal{B}_{\varepsilon}(P,g):=\{[A]\in\mathcal{B}(P,g):\|F_{A}^{+}\|_{L^{2}(X)}\leq\varepsilon \}$ such that the last eigenvalue of $d_{A}^{+}d_{A}^{+,\ast}$ on $L^{2}(\Om^{2,+}(X,\mathfrak{g}_{P}))$ has a lower positive bound, where $\varepsilon=\varepsilon(g)$ is  a suitable small positive constant. The purpose of this article is to introduce the definition of  $strong$ irreducible connection $A$ which only guarantees that $\la(A)$ has a low positive bound, See Definition \ref{D3}.

If the Riemannian metric is $good$, there is a well known gluing theorem for anti-self-dual connection which due to Taubes \cite{Taubes1982}. Following the idea of Taubes', if we suppose the connection $A\in\mathcal{A}_{P}$ which obeys $\|F_{A}^{+}\|_{L^{2}(X)}\leq \varepsilon$ for a suitable  small positive constant  and $\la(A)\geq\la_{0}>0$, then we could deform the connection $A$ to an other connection $A_{\infty}$ which satisfies $\La_{\w}F_{A_{\infty}}=0$, see Corollary \ref{C4}. The connection $A_{\infty}$ may be not an ASD connection, but the $(0,2)$-part $F_{A_{\infty}}^{0,2}$ of the curvature $F_{A_{\infty}}$ could estimated by $\La_{\w}F_{A}$. Following the priori estimate in Theorem \ref{T6} and the vanishing Theorem \ref{T2}, we have
\begin{theorem}\label{T1}
Let $X$ be a compact, simply-connected, K\"{a}hler surface with a K\"{a}hler metric $g$, $P$ be a principal $G$-bundle with $G$ being $SU(2)$ or $SO(3)$. Suppose that the connections $A\in M(P,g)$ are $strong$ irreducible ASD connections in the sense of Definition \ref{D3}, then there is a positive constant $\varepsilon=\varepsilon(g,P)$ with following significance. If the Yang-Mills connection $A$ on $P$ such that 
\begin{equation*}
\|F_{A}^{+}\|_{L^{2}(X)}\leq\varepsilon,
\end{equation*}
then $A$ is anti-self-dual with respect to $g$, i.e., $F_{A}^{+}=0$.
\end{theorem}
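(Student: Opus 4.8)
The plan is to follow the strategy of Feehan and Taubes-style gluing that the introduction has already outlined, adapting it to the K\"{a}hler-surface setting where ``good metric'' is replaced by ``strong irreducibility'' of the ASD connections in $M(P,g)$. Suppose, for contradiction, that the conclusion fails; then there is a sequence of Yang-Mills connections $A_{j}$ on $P$ with $\|F_{A_{j}}^{+}\|_{L^{2}(X)}\to 0$ but $F_{A_{j}}^{+}\not\equiv 0$ for every $j$. The first step is to invoke the deformation/gluing result, Corollary \ref{C4}: since $\|F_{A_{j}}^{+}\|_{L^{2}(X)}$ is small and (by the strong irreducibility hypothesis together with the priori estimate in Theorem \ref{T6}) the least eigenvalue $\lambda(A_{j})$ stays bounded below by a fixed $\lambda_{0}>0$, we can deform each $A_{j}$ to a nearby connection $A_{j,\infty}$ with $\Lambda_{\w}F_{A_{j,\infty}}=0$, and with the $(0,2)$-part $F_{A_{j,\infty}}^{0,2}$ controlled by $\|\Lambda_{\w}F_{A_{j}}\|$, hence by $\|F_{A_{j}}^{+}\|_{L^{2}(X)}$, which tends to $0$.

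The second step is a compactness/convergence argument on the deformed sequence $A_{j,\infty}$. Because $X$ is simply connected and $G=SU(2)$ or $SO(3)$, and because $\Lambda_{\w}F_{A_{j,\infty}}=0$ while $\|F_{A_{j,\infty}}^{0,2}\|\to 0$, the energies $\|F_{A_{j,\infty}}^{+}\|_{L^{2}(X)}\to 0$ as well (on a K\"{a}hler surface the self-dual part of the curvature is $F^{0,2}+\overline{F^{0,2}}+\tfrac{1}{?}(\Lambda_\w F)\,\w$, so both pieces are going to zero). One then argues that, after passing to a subsequence and applying gauge transformations, $A_{j,\infty}$ converges (modulo bubbling, handled by the topological constraint on $SU(2)$/$SO(3)$ bundles and the energy being pinned near a minimal value) to an ASD connection $A_{\infty}\in M(P,g)$. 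The strong irreducibility of the limit $A_{\infty}$ guarantees $H^{0}_{A_{\infty}}=0$, and via the identification of $H^{2}_{A_{\infty}}$ with $H^{0}_{A_{\infty}}\oplus\mathbf{H}$ recalled in the introduction (together with $\mathbf{H}=0$, which on a K\"{a}hler surface for these structure groups follows from the vanishing Theorem \ref{T2}), one gets that the deformation complex at $A_{\infty}$ is unobstructed, i.e.\ $\lambda(A_{\infty})>0$ and the moduli space is smooth near $[A_{\infty}]$.

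The third step is the contradiction proper: apply the vanishing Theorem \ref{T2} directly to $A_{j}$ for $j$ large. Once we know that $[A_{j}]$ lies in a small $L^{2}$-neighbourhood of the smooth, unobstructed point $[A_{\infty}]$ of $M(P,g)$ and that $\lambda(A_{j})\geq\lambda_{0}>0$, the standard Bochner--Weitzenb\"{o}ck argument for Yang-Mills connections on K\"{a}hler surfaces (the content of Theorem \ref{T2}, applied with the quantitative lower eigenvalue bound in place of a ``good metric'' hypothesis) forces $F_{A_{j}}^{+}=0$. This contradicts $F_{A_{j}}^{+}\not\equiv 0$ and proves the theorem; concretely, $\varepsilon=\varepsilon(g,P)$ is chosen smaller than the thresholds coming from Corollary \ref{C4}, Theorem \ref{T6}, and Theorem \ref{T2}.

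I expect the main obstacle to be the second step, namely showing that the deformed connections $A_{j,\infty}$ genuinely converge to an element of the given stratum $M(P,g)$ rather than escaping via energy concentration, and simultaneously keeping the eigenvalue bound $\lambda(A_{j})\geq\lambda_{0}$ uniform along the whole sequence. This is exactly the place where the hypothesis that \emph{all} ASD connections in $M(P,g)$ are strong irreducible is used in an essential way (it rules out a limit sitting at a reducible point where $\lambda$ could degenerate), and where the restriction to $G=SU(2),SO(3)$ and simply-connected $X$ enters to control the bubbling and the topology of nearby bundles. Making the passage from ``$\lambda(A_{\infty})>0$ for the limit'' to ``$\lambda(A_{j})\geq\lambda_{0}>0$ for all large $j$'' uniform — rather than merely for a subsequence — is the delicate analytic point, and it is precisely what Theorem \ref{T6} is designed to supply.
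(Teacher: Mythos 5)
Your opening step (deforming the Yang--Mills connection via Theorem \ref{T6}/Corollary \ref{C4} to a nearby connection with $\La_{\w}F_{A_{\infty}}=0$, using the strong irreducibility hypothesis to keep $\la(A)$ bounded below) does match the paper. But the two steps that actually produce $F_{A}^{+}=0$ are missing. First, the paper never argues through a limit point of the moduli space at the final stage; instead Lemma \ref{L5} shows that for the Yang--Mills connection $A$ \emph{itself} one has the exact vanishing $\La_{\w}F_{A}\equiv 0$ and $\bar{\pa}_{A}^{\ast}F_{A}^{0,2}=0$: one combines the comparison with $A_{\infty}$, the Weitzenb\"ock identity (\ref{E12}) and the Yang--Mills identities of Proposition \ref{P1} to get $\|\bar{\pa}_{A}\La_{\w}F_{A}\|_{L^{2}}^{2}\leq c\|\La_{\w}F_{A}\|_{L^{2}}^{2}\|F_{A}^{0,2}\|_{L^{2}}^{2}$, which forces $\La_{\w}F_{A}=0$ once $\|F_{A}^{0,2}\|_{L^{2}}$ is small. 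Your proposal only records that the deformed connections $A_{j,\infty}$ satisfy $\La_{\w}F=0$ with small $(0,2)$-part and never transfers any exact statement back to $A_{j}$; the sentence ``the standard Bochner--Weitzenb\"ock argument forces $F_{A_{j}}^{+}=0$'' asserts the conclusion rather than deriving it.

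Second, Theorem \ref{T2} cannot be ``applied directly to $A_{j}$'': its hypothesis is that the connection is an irreducible \emph{ASD} connection, which is exactly what is to be proved. The paper bridges this gap quantitatively with the eigenvalue $\mu(A)$ of $\bar{\pa}_{A}\bar{\pa}_{A}^{\ast}$ on the cone $\tilde{\Om}^{0,2}(X,\mathfrak{g}_{P}^{\C})$ (Definition \ref{D3.13}): positivity at irreducible ASD connections (Proposition \ref{P3.12}, where Theorem \ref{T2} is used), continuity under Uhlenbeck limits (Proposition \ref{P3.16}, Corollary \ref{C3.9}), and then the uniform bound $\mu(A)\geq\mu_{0}/2$ on $\mathcal{B}_{\varepsilon}(P,g)$ (Theorem \ref{T3.14}); applied to $v=F_{A}^{0,2}$ with $\bar{\pa}_{A}^{\ast}F_{A}^{0,2}=0$ this gives $F_{A}^{0,2}=0$. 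To invoke it one must also know $[F_{A}^{0,2},\ast F_{A}^{0,2}]=0$, which the paper deduces from harmonicity of the curvature and $G=SU(2)$ or $SO(3)$ (the rank-one proposition); without this, $F_{A}^{0,2}$ need not lie in $\tilde{\Om}^{0,2}$. Your proposal instead leans on the bound $\la(A_{j})\geq\la_{0}$ for $d_{A}^{\ast}d_{A}$ on $0$-forms, which is a different operator on a different space and does not control $F_{A}^{0,2}$. (A side remark: your claim that $\mathbf{H}=0$ follows from Theorem \ref{T2} is also unjustified, since a general element of $\mathbf{H}$ need not satisfy the bracket condition $[\phi,\ast\phi]=0$; in the paper that condition is obtained only for $\phi=F_{A}^{0,2}$.) So the final step, the only place where anti-self-duality is actually obtained, does not go through as written.
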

We may assume that any connection $A\in M(P,g)$ is $strong$ irreducible ASD connection in the sense of Definition \ref{D3} if the conditions in Theorem \ref{T4} are obeyed.
\begin{corollary}
Let $X$ be a compact, simply-connected, K\"{a}hler surface with a K\"{a}hler metric $g$, that is $generic$ in the sense of Definition \ref{D1}, $P$ be a $SO(3)$-bundle over $X$. Suppose that the second Stiefel-Whitney class, $\w_{2}(P)\in H^{2}(X;\mathbb{Z}/2\mathbb{Z})$, is non-trivial, then there is a positive constant $\varepsilon=\varepsilon(g,P)$ with following significance. If the curvature $F_{A}$ of a Yang-Mills connection $A$ on $P$ obeying
$$\|F_{A}^{+}\|_{L^{2}(X)}\leq\varepsilon,$$
then $A$ is anti-self-dual with respect to $g$, i.e., $F_{A}^{+}=0$.
\end{corollary}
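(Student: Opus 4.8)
The plan is to derive the Corollary from Theorem~\ref{T1} by verifying that the hypothesis ``every $A\in M(P,g)$ is a $strong$ irreducible ASD connection'' holds automatically under the stated assumptions. Since $X$ is simply-connected and $G=SO(3)$, the only obstruction to $M(P,g)$ consisting of $strong$ irreducible connections is the possible presence of reducible ASD connections (and, more quantitatively, a degeneration of the lowest eigenvalue $\la(A)$ along $M(P,g)$). First I would recall that a reducible $SO(3)$-connection on $P$ forces a splitting $\mathfrak{g}_P \cong \underline{\R}\oplus L$ for a real line bundle, equivalently a reduction of structure group, and that the existence of a reducible ASD connection pins down a cohomology class (a $(1,1)$-class of a definite type) obstructed by the intersection form; the non-triviality of $\w_2(P)$ together with the $generic$ choice of $g$ (Definition~\ref{D1}) is designed precisely to rule this out — this is the content that Theorem~\ref{T4} packages. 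So the first step is: invoke Theorem~\ref{T4} to conclude that, for $g$ generic and $\w_2(P)\neq 0$, every $[A]\in M(P,g)$ is $strong$ irreducible in the sense of Definition~\ref{D3}.

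Second, with that established, the moduli space $M(P,g)$ of ASD connections on $P$ is compact (Uhlenbeck compactness together with the absence of reducibles and, for $SU(2)$ or $SO(3)$ bundles of the relevant charge, no bubbling off into lower strata that would produce a reducible limit) and consists entirely of irreducible connections; hence the lowest eigenvalue $\la(A)$ of the relevant operator $d_A^+ d_A^{+,\ast}$ (restricted appropriately, as in Definition~\ref{D3}) is a positive continuous function on a compact set and therefore admits a uniform lower bound $\la_0>0$. This is exactly the input needed to feed into Theorem~\ref{T1}.

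Third, I would simply apply Theorem~\ref{T1}: taking $\varepsilon = \varepsilon(g,P)$ to be the constant produced there (which depends only on $g$ and $P$, via the uniform bound $\la_0$ just obtained), any Yang-Mills connection $A$ on $P$ with $\|F_A^+\|_{L^2(X)}\leq\varepsilon$ is anti-self-dual. Since $G=SO(3)$ is one of the two groups allowed in Theorem~\ref{T1}, nothing further is needed, and the Corollary follows.

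The main obstacle — and the step doing the real work — is the second one: ensuring that the hypotheses genuinely force a \emph{uniform} positive lower bound on $\la(A)$ over all of $M(P,g)$, not merely positivity at each point. This requires (i) the compactness of $M(P,g)$, which in turn needs the $generic$ metric to exclude reducible connections in all limits (here the non-triviality of $\w_2(P)$ is essential, since $w_2\neq 0$ forbids the flat/reducible connections that typically spoil compactness for $SO(3)$-bundles), and (ii) the lower-semicontinuity (indeed continuity) of $\la(\cdot)$ along sequences of ASD connections converging in the appropriate topology, so that the infimum over the compact moduli space is attained and strictly positive. Once this uniform $\la_0$ is in hand, the passage through Theorems~\ref{T6}, \ref{T2} and ultimately \ref{T1} is automatic.
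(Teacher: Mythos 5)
Your overall route is exactly the paper's: the Corollary is obtained by checking the hypothesis of Theorem~\ref{T1}, namely that every $[A]\in M(P,g)$ is \emph{strong} irreducible in the sense of Definition~\ref{D3}, and this is precisely what Theorem~\ref{T4} supplies under the assumptions that $g$ is \emph{generic} (Definition~\ref{D1}) and $w_{2}(P)\neq 0$; the paper itself states the Corollary as an immediate consequence of this observation plus Theorem~\ref{T1}, so steps 1 and 3 of your proposal are the intended argument.

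One caveat about your step 2, where you try to re-derive the uniform lower bound on $\lambda(A)$: the justification you give is not correct as stated. The moduli space $M(P,g)$ is in general \emph{not} compact, and the absence of reducibles does not prevent bubbling; Uhlenbeck limits of ASD connections with fixed charge can perfectly well concentrate curvature and converge to ASD connections on lower-charge bundles. The mechanism in the paper (and in Feehan's argument it follows) is different: one works with the compact Donaldson--Uhlenbeck compactification $\bar{M}(P,g)$, uses the continuity of $\lambda(\cdot)$ under Uhlenbeck convergence (Proposition~\ref{P2.6}), and uses the Lemma preceding Theorem~\ref{T4}, which says that the \emph{generic} metric excludes reducible ASD connections on every bundle $\tilde{P}$ with $0<c_{2}(\tilde{P})\leq c$ while $w_{2}(P)\neq 0$ excludes the product connection, so that $\lambda>0$ at every point of $\bar{M}(P,g)$, including the ideal boundary points arising from bubbling; compactness of $\bar{M}(P,g)$ then yields the uniform $\lambda_{0}>0$, and a further limiting argument extends the bound to $\mathcal{B}_{\varepsilon}(P,g)$. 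Since your step 1 already invokes Theorem~\ref{T4}, which packages all of this, your proof of the Corollary still goes through; but the parenthetical claim that $M(P,g)$ is compact because there is ``no bubbling off into lower strata'' would fail if it were actually relied upon, so the redundant step 2 should either be deleted or replaced by the $\bar{M}(P,g)$ argument just described.
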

This paper is organised as follows. In Section 2, we first establish our notation and recall basic definitions in gauge theory over K\"{a}hler manifolds required for the remainder of this article. Following the idea on \cite{Feehan}, we prove that  the least eigenvalue, $\la(A)$, of $d^{\ast}_{A}d_{A}$ has a positive lower bound $\la_{0}=\la_{0}(g,P)$ that is uniform with respect to $[A]\in\mathcal{B}(g,P)$ obeying $\|F^{+}_{A}\|_{L^{2}(X)}\leq\varepsilon$, for a small enough $\varepsilon=\varepsilon(g,P)\in (0, 1]$ and under the given sets of conditions on $g, G$. In Section 3, using the similar way of construct of ASD connection by Taubes \cite{Taubes1982}, we obtain that an approximate ASD connection $A\in\mathcal{A}_{P}$ could deform into an other approximate ASD connection $A_{\infty}$ which satisfies $\La_{\w}F_{A_{\infty}}=0$. Thus we can prove that if a Yang-Mills connection obeying $\|F^{+}_{A}\|_{L^{2}(X)}\leq\varepsilon$, then the curvature is harmonic and $\La_{\w}F_{A}=0$. In Section 4, we establish a vanishing theorem (Theorem \ref{T2}) on the space of $\tilde{\Om}^{0,2}(X,\mathfrak{g}_{P}^{\C})$ Equ. (\ref{E3.10}). Thus we also can prove that  the least eigenvalue, $\mu(A)$, of $\bar{\pa}_{A}\bar{\pa}^{\ast}_{A}$ on space $\tilde{\Om}^{0,2}(X,\mathfrak{g}_{P}^{\C})$ has a positive lower bound $\mu_{0}=\mu_{0}(g,P)$ that is uniform with respect to $[A]\in\mathcal{B}(g,P)$ obeying $\|F^{+}_{A}\|_{L^{2}(X)}\leq\varepsilon$. Combining the curvature is harmonic, then we complete the proof of Theorem \ref{T1}.
\section{Preliminaries}
\subsection{ Weitzenb\"{o}ck formula}
Let $X$ be a K\"{a}hler surface with K\"{a}hler form $\w$ and $P$ be a principal $G$-bundle over $X$. For any connection $A$ on $P$ we have the covariant exterior derivatives $d_{A}:\Om^{k}(X, \mathfrak{g}_{P})\rightarrow\Om^{k+1}(X,\mathfrak{g}_{P})$. Like the canonical splitting the exterior derivatives $d=\pa+\bar{\pa}$, decomposes over $X$ into $d_{A}=\pa_{A}+\bar{\pa}_{A}$. We denote also by $\Om^{p,q}(X,\mathfrak{g}_{P}^{\C})$ the space of $C^{\infty}$-$(p,q)$ forms on $\mathfrak{g}_{P}^{\C}:=\mathfrak{g}_{P}\otimes\C$. Denote by $L_{\w}$ the operator of exterior multiplication by the K\"{a}hler form $\w$:
$$L_{\w}\a=\w\wedge\a, \a\in\Om^{p,q}(X,\mathfrak{g}_{P}^{\C}),$$
and, as usual, let $\La_{\w}$ denote its pointwise adjoint, i.e.,
$$\langle\La_{\w}\a,\b\rangle=\langle\a,L_{\w}\b\rangle.$$
Then it is well known that $\La_{\w}=\ast^{-1}\circ L_{\w}\circ \ast$. We could decompose the curvature, $F_{A}$, as
$$F_{A}=F^{2,0}_{A}+F^{1,1}_{A0}+\frac{1}{2}\La_{\w}{F}_{A}\otimes\w+F^{0,2}_{A},$$
where  $F^{1,1}_{A0}=F^{1,1}_{A}-\frac{1}{2}\La_{\w}F_{A}\otimes\w$.\ We can write Yang-Mills functional as
\begin{equation}\nonumber
\begin{split}
YM(A)&=4\|F^{0,2}_{A}\|^{2}+\|\La_{\w} F_{A}\|^{2}+\int_{X}tr(F_{A}\wedge F_{A}).\\
\end{split}
\end{equation}
The energy functional $\|\La_{\w}F_{A}\|^{2}$ plays an important role in the study of Hermitian-Einstein connections, see \cite{DK,UY}. If the connection $A$ is an ASD connection, the Yang-Mills functional is minimum. We recall some identities on Yang-Mills connection over K\"{a}hler surface, see \cite[Proposition 3.1]{Itoh}  or \cite[Proposition 2.1]{Huang} .
\begin{proposition}\label{P1}
Let $A$ be a Yang-Mills connection on a principal $G$-bundle $P$ over a K\"{a}hler surface $X$, then we have following identities:
\begin{equation*}
\begin{split}
&(1)\ 2\bar{\pa}^{\ast}_{A}F^{0,2}_{A}=\sqrt{-1}\bar{\pa}_{A}\La_{\w}F_{A},\\
&(2)\ 2\pa_{A}^{\ast}F^{2,0}_{A}=-\sqrt{-1}\pa_{A}\La_{\w}F_{A}.\\
\end{split}
\end{equation*}
\end{proposition}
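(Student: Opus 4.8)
The plan is to derive both identities by decomposing the Bianchi identity $d_{A}F_{A}=0$ and the Yang-Mills equation $d_{A}^{\ast}F_{A}=0$ into their bidegree components and then feeding the results into the twisted K\"{a}hler identities. Recall that on a K\"{a}hler surface, for the unitary connection $A$ on the adjoint bundle, the Nakano-type identities
\begin{equation*}
[\La_{\w},\bar{\pa}_{A}]=-\sqrt{-1}\,\pa_{A}^{\ast},\qquad [\La_{\w},\pa_{A}]=\sqrt{-1}\,\bar{\pa}_{A}^{\ast}
\end{equation*}
hold; these are purely a consequence of the K\"{a}hler condition on $(X,g)$ together with the metric compatibility of $A$, and do not require that $A$ be integrable, so they apply verbatim to our $\mathfrak{g}_{P}^{\C}$-valued forms. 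I also record the elementary facts that, on a surface, $\La_{\w}$ annihilates forms of type $(2,0)$ and $(0,2)$, whence $\La_{\w}F_{A}=\La_{\w}F_{A}^{1,1}$, and that $\pa_{A}^{\ast}$ (respectively $\bar{\pa}_{A}^{\ast}$) lowers the holomorphic (respectively antiholomorphic) degree by one.

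First I would split the Bianchi identity. Since $d_{A}=\pa_{A}+\bar{\pa}_{A}$ and $F_{A}=F_{A}^{2,0}+F_{A}^{1,1}+F_{A}^{0,2}$, the vanishing of the $(2,1)$ and $(1,2)$ components of $d_{A}F_{A}$ gives
\begin{equation*}
\bar{\pa}_{A}F_{A}^{2,0}+\pa_{A}F_{A}^{1,1}=0,\qquad \bar{\pa}_{A}F_{A}^{1,1}+\pa_{A}F_{A}^{0,2}=0.
\end{equation*}
Likewise, splitting $d_{A}^{\ast}F_{A}=0$ into its $(1,0)$ and $(0,1)$ components yields
\begin{equation*}
\pa_{A}^{\ast}F_{A}^{2,0}+\bar{\pa}_{A}^{\ast}F_{A}^{1,1}=0,\qquad \pa_{A}^{\ast}F_{A}^{1,1}+\bar{\pa}_{A}^{\ast}F_{A}^{0,2}=0,
\end{equation*}
where several a priori terms drop out for degree reasons.

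To prove (1), I would apply $\pa_{A}^{\ast}=\sqrt{-1}[\La_{\w},\bar{\pa}_{A}]$ to $F_{A}^{1,1}$. Using the second Bianchi relation to replace $\bar{\pa}_{A}F_{A}^{1,1}$ by $-\pa_{A}F_{A}^{0,2}$, then the identity $[\La_{\w},\pa_{A}]=\sqrt{-1}\bar{\pa}_{A}^{\ast}$ together with $\La_{\w}F_{A}^{0,2}=0$ to evaluate $\La_{\w}\pa_{A}F_{A}^{0,2}=\sqrt{-1}\,\bar{\pa}_{A}^{\ast}F_{A}^{0,2}$, and $\La_{\w}F_{A}=\La_{\w}F_{A}^{1,1}$, I obtain
\begin{equation*}
\pa_{A}^{\ast}F_{A}^{1,1}=\bar{\pa}_{A}^{\ast}F_{A}^{0,2}-\sqrt{-1}\,\bar{\pa}_{A}\La_{\w}F_{A}.
\end{equation*}
Substituting the Yang-Mills relation $\pa_{A}^{\ast}F_{A}^{1,1}=-\bar{\pa}_{A}^{\ast}F_{A}^{0,2}$ produces the factor of two and gives exactly $2\bar{\pa}_{A}^{\ast}F_{A}^{0,2}=\sqrt{-1}\,\bar{\pa}_{A}\La_{\w}F_{A}$. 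Identity (2) follows from the mirror computation: apply $\bar{\pa}_{A}^{\ast}=-\sqrt{-1}[\La_{\w},\pa_{A}]$ to $F_{A}^{1,1}$, use the first Bianchi relation and $\La_{\w}F_{A}^{2,0}=0$, and substitute the other Yang-Mills relation; alternatively one may take the formal complex conjugate of (1), since $F_{A}$ is a real form.

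The computation is essentially routine, and the only genuine point requiring care is the bookkeeping of signs and factors of $\sqrt{-1}$ in the twisted K\"{a}hler identities and the bidegree splittings, since a single sign error propagates into the final constant. The one conceptual item worth checking explicitly is that the Nakano identities are valid for the adjoint connection $A$, which is unitary but need not be the Chern connection of a holomorphic structure; this is harmless because the proof of those identities uses only the K\"{a}hler form and the Leibniz rule, with the curvature entering only at the level of the Bochner-Kodaira commutators, which are never invoked here.
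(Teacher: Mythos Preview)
Your argument is correct: the combination of the twisted K\"{a}hler identities with the bidegree components of the Bianchi identity and the Yang-Mills equation yields both formulas, and your sign/factor bookkeeping checks out line by line. The paper does not actually supply a proof of this proposition; it simply cites \cite[Proposition 3.1]{Itoh} and \cite[Proposition 2.1]{Huang}, and the argument you have written is precisely the standard one given in those references.
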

We define a Hermitian inner product $\langle\cdot,\cdot\rangle$ on $\Om^{p,q}(X,\mathfrak{g}_{P}^{\C})$ by
$$\langle\a,\b\rangle_{L^{2}(X)}=\int_{X}\langle\a,\b\rangle(x)dvol_{g},$$
$$\langle\a,\b\rangle(x)dvol_{g}=\langle\a\wedge\ast\bar{\b} \rangle,$$
where $\ast$ is the $\C$-linearly extend Hodge operator over complex forms and $\bar{}$ is the conjugation on the bundle
$\mathfrak{g}_{P}^{\C}$-forms which is defined naturally. One also can see \cite[Page 99]{Itoh2} or \cite{Huy}. We recall a Weitzenb\"{o}ck formula for Lie algebra-valued $(0,2)$-forms, see \cite[Proposition 2.3]{Itoh}, a self-dual operator denote by $\De_{\bar{\pa}_{A}}=\bar{\pa}_{A}\bar{\pa}^{\ast}_{A}+\bar{\pa}_{A}^{\ast}\bar{\pa}_{A}$.
\begin{proposition}\label{P3}
Let $X$ be a smooth K\"{a}hler surface with a K\"{a}hler metric $g$, $A$ be a connection on a principal $G$-bundle $P$ over $X$. For any $\phi\in\Om^{0,2}(X,\mathfrak{g}_{P}^{\C})$,
\begin{equation}\label{12}
\De_{\bar{\pa}_{A}}\phi=\na^{\ast}_{A}\na_{A}\phi+[\sqrt{-1}\La_{\w}F_{A},\phi]+2S\phi
\end{equation}
where $S$ is the scalar curvature of the metric $g$.
\end{proposition}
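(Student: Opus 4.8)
The plan is to prove this as a twisted Bochner--Kodaira type identity, exploiting in an essential way that $X$ has complex dimension two, so that $\Om^{0,2}$ sits in the top anti-holomorphic degree. First I would reduce $\De_{\bar\pa_A}\phi$ to a second-order operator of very simple shape. Since $\Om^{0,3}(X,\mathfrak{g}_{P}^{\C})=0$ on a surface, $\bar\pa_A\phi=0$, so $\De_{\bar\pa_A}\phi=\bar\pa_A\bar\pa_A^{\ast}\phi$. I would then invoke the K\"ahler identities for the twisted operators, $[\La_{\w},\pa_A]=\sqrt{-1}\,\bar\pa_A^{\ast}$ and $[\La_{\w},\bar\pa_A]=-\sqrt{-1}\,\pa_A^{\ast}$ (signs in the standard convention), which hold for \emph{any} unitary connection on a Hermitian bundle over a K\"ahler manifold --- their proof reduces, via K\"ahler normal coordinates and a suitable local frame, to the flat model, in which the connection is immaterial. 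Since $\La_{\w}$ annihilates $(0,2)$-forms on a surface, the first identity gives $\bar\pa_A^{\ast}\phi=-\sqrt{-1}\,\La_{\w}\pa_A\phi$; commuting $\bar\pa_A$ past $\La_{\w}$ with the second identity and using $\bar\pa_A\pa_A\phi\in\Om^{1,3}(X,\mathfrak{g}_{P}^{\C})=0$, one lands on $\De_{\bar\pa_A}\phi=\pa_A^{\ast}\pa_A\phi$. (The same bidegree count gives $\De_{\pa_A}\phi=\pa_A^{\ast}\pa_A\phi$ as well, so all three Laplacians coincide on $\Om^{0,2}$.)

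Next I would establish a Bochner formula for $\pa_A^{\ast}\pa_A$ on $\Om^{0,2}(X,\mathfrak{g}_{P}^{\C})$ by a local computation in a unitary coframe together with K\"ahler normal coordinates. Writing $\phi$ in such a frame and expanding $\pa_A^{\ast}\pa_A\phi$, the pure second-order part reassembles into the rough Laplacian $\na_A^{\ast}\na_A\phi$, while commuting covariant derivatives generates two zeroth-order terms: the frame-trace $\sum_{a}[F_A(e_a,\ov{e}_a),\phi]$ of the curvature of $\mathfrak{g}_{P}^{\C}$, which is, up to a universal constant, $[\sqrt{-1}\La_{\w}F_A,\phi]$; and the curvature of the $\La^{0,2}T^{\ast}X$-factor applied to $\phi$, which, since $\La^{0,2}T^{\ast}X\cong K_X^{-1}$ is a line bundle, is a scalar multiple of $\phi$ --- and on a K\"ahler surface that scalar is a universal multiple of $S$, as the curvature of $K_X^{-1}$ is the Ricci form $\rho$ and $\La_{\w}\rho$ is proportional to $S$. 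Collecting the three contributions and fixing normalizations yields (\ref{12}).

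The step I expect to be the main obstacle is precisely this last piece: pinning down the universal constants --- the numerical relations among $\De_{\bar\pa_A}$, $\pa_A^{\ast}\pa_A$ and $\na_A^{\ast}\na_A$, the sign and factor in $\sum_{a}F_A(e_a,\ov{e}_a)=\sqrt{-1}\La_{\w}F_A$, and the exact multiple of $S$ coming from the curvature of $K_X^{-1}$ --- so that the three coefficients come out as the stated $1$, $1$ and $2S$ in the normalization used here. A cleaner but equivalent route, trading this bookkeeping for a standard formula, is to identify $\Om^{0,\bullet}(X,\mathfrak{g}_{P}^{\C})$ with the twisted spinor bundle of the canonical $\mathrm{Spin}^{c}$ structure (determinant line $K_X^{-1}$) and apply the Lichnerowicz--Weitzenb\"ock formula, which already packages the Riemann-curvature and $F_A$ contributions together; restricting to the $\Om^{0,2}$-summand, where the Clifford action simplifies, again yields (\ref{12}). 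One could instead route through the Hodge Laplacian $\De_{d_A}$ on $\mathfrak{g}_{P}$-valued $2$-forms together with the classical de Rham--Weitzenb\"ock formula, at the price of checking that on a surface the mixed terms and the $F_A^{2,0}$, $F_A^{0,2}$ contributions all land outside bidegree $(0,2)$.
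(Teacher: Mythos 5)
The paper offers no proof of this proposition at all --- it is quoted verbatim from \cite[Proposition 2.3]{Itoh} --- so your argument can only be measured against the standard derivation, and its first stage is exactly right and efficiently organized: on a surface $\Om^{0,3}=0$ gives $\De_{\bar{\pa}_{A}}\phi=\bar{\pa}_{A}\bar{\pa}_{A}^{\ast}\phi$; the twisted K\"ahler identities do hold for an arbitrary unitary (not necessarily integrable) connection, being first-order identities verified pointwise in a frame that is covariant-constant at a point; and the bidegree count $\La_{\w}\phi=0$, $\bar{\pa}_{A}\pa_{A}\phi\in\Om^{1,3}=0$ correctly yields $\De_{\bar{\pa}_{A}}\phi=\pa_{A}^{\ast}\pa_{A}\phi$ on $\Om^{0,2}$.

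The genuine gap is in the Bochner step, and it is more than the constant bookkeeping you flag. You assert that the pure second-order part of $\pa_{A}^{\ast}\pa_{A}$ reassembles into the full rough Laplacian $\na_{A}^{\ast}\na_{A}\phi$. It cannot: the principal symbol of $\pa_{A}^{\ast}\pa_{A}$ on $\Om^{0,2}$ is $|\xi^{1,0}|^{2}=\frac{1}{2}|\xi|^{2}$, half that of $\na_{A}^{\ast}\na_{A}$. What the local computation actually produces is the $(1,0)$-part Laplacian $(\na^{1,0})^{\ast}\na^{1,0}$ plus zeroth-order curvature, and converting this into $\na_{A}^{\ast}\na_{A}$ requires one further commutation of $(1,0)$- against $(0,1)$-derivatives, which simultaneously halves the leading coefficient and contributes an additional $\La_{\w}F_{A}$-type term. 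Carried out with the conventions you set up (the same Hermitian $L^{2}$ pairing the paper states), the computation therefore terminates at an identity of the shape $\De_{\bar{\pa}_{A}}\phi=\frac{1}{2}\na_{A}^{\ast}\na_{A}\phi+c_{1}[\sqrt{-1}\La_{\w}F_{A},\phi]+c_{2}S\phi$, not at (\ref{12}) with coefficient $1$ on $\na_{A}^{\ast}\na_{A}$; the flat model ($X=\C^{2}$, trivial flat bundle, $F_{A}=0$, $S=0$), where $\De_{d}=2\De_{\bar{\pa}}$ forces $\De_{\bar{\pa}}\phi=\frac{1}{2}\na^{\ast}\na\phi$, is a decisive sanity check, and no constant rescaling of the fibre metric on $\La^{0,2}T^{\ast}X\otimes\mathfrak{g}_{P}^{\C}$ changes either side. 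So to finish you must either prove the half-rough-Laplacian version and then pin down the normalization conventions of \cite{Itoh} under which the statement (\ref{12}) is the same identity, or work in those conventions from the start; your alternative $\mathrm{Spin}^{c}$/Lichnerowicz route meets the identical issue, since on a K\"ahler surface $D^{2}=2\De_{\bar{\pa}_{A}}$. The remaining ingredients --- identifying $\sum_{a}[F_{A}(e_{a},\ov{e}_{a}),\phi]$ with a multiple of $[\sqrt{-1}\La_{\w}F_{A},\phi]$, and the $\w$-trace of the curvature of $\La^{0,2}T^{\ast}X\cong K_{X}^{-1}$ with a multiple of $S$ --- are sound modulo the same constant-tracking.
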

Combining Weitzenb\"{o}ck formula on Proposition \ref{P3} with the identities on Proposition \ref{P1}, we have an identity for Yang-Mills connection.
\begin{proposition}\label{P5}
Let $X$ be a smooth K\"{a}hler surface with a K\"{a}hler metric $g$, $A$ be a Yang-Mills connection on a principal $G$-bundle $P$ over $X$. Then we have
\begin{equation}\label{E14}
\na_{A}^{\ast}\na_{A}F^{0,2}_{A}+\frac{3}{2}[\sqrt{-1}\La_{\w}F_{A},F^{0,2}_{A}]+2SF^{0,2}_{A}=0
\end{equation}
Furthermore, if $X$ is compact,
\begin{equation}\label{E12}
\frac{3}{4}\|\bar{\pa}_{A}\La_{\w}F_{A}\|^{2}_{L^{2}(X)}=\|\na_{A}F^{0,2}_{A}\|^{2}_{L^{2}(X)}+\int_{X}2S|F_{A}^{0,2}|^{2}dvol_{g}.
\end{equation}
\end{proposition}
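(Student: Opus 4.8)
The plan is to substitute $\phi=F^{0,2}_{A}$ into the Weitzenb\"{o}ck formula (\ref{12}) of Proposition \ref{P3} and then replace the Laplacian term $\De_{\bar{\pa}_{A}}F^{0,2}_{A}$ by an explicit zeroth-order expression extracted from the Yang-Mills identities of Proposition \ref{P1}. First I would observe that on a K\"{a}hler \emph{surface} every $(0,3)$-form vanishes, so $\bar{\pa}_{A}F^{0,2}_{A}=0$ and hence $\De_{\bar{\pa}_{A}}F^{0,2}_{A}=\bar{\pa}_{A}\bar{\pa}^{\ast}_{A}F^{0,2}_{A}$. By Proposition \ref{P1}(1) one has $\bar{\pa}^{\ast}_{A}F^{0,2}_{A}=\tfrac{1}{2}\sqrt{-1}\,\bar{\pa}_{A}\La_{\w}F_{A}$, so $\De_{\bar{\pa}_{A}}F^{0,2}_{A}=\tfrac{1}{2}\sqrt{-1}\,\bar{\pa}_{A}^{2}(\La_{\w}F_{A})$.

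Next, since $\La_{\w}F_{A}$ is a section of $\mathfrak{g}_{P}^{\C}$, the curvature identity $d_{A}^{2}=[F_{A},\,\cdot\,]$, decomposed into bidegrees, gives $\bar{\pa}_{A}^{2}(\La_{\w}F_{A})=[F^{0,2}_{A},\La_{\w}F_{A}]=-[\La_{\w}F_{A},F^{0,2}_{A}]$. Combining the last two facts yields $\De_{\bar{\pa}_{A}}F^{0,2}_{A}=-\tfrac{1}{2}[\sqrt{-1}\La_{\w}F_{A},F^{0,2}_{A}]$. Substituting this into (\ref{12}) and collecting the two commutator terms produces
\[
\na^{\ast}_{A}\na_{A}F^{0,2}_{A}+\tfrac{3}{2}[\sqrt{-1}\La_{\w}F_{A},F^{0,2}_{A}]+2SF^{0,2}_{A}=0,
\]
which is (\ref{E14}).

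For the integral identity (\ref{E12}), assuming $X$ compact, I would take the $L^{2}(X)$-pairing of (\ref{E14}) with $F^{0,2}_{A}$ and integrate by parts on the first term to obtain $\|\na_{A}F^{0,2}_{A}\|^{2}_{L^{2}(X)}$, while the zeroth-order term contributes $2\int_{X}S|F^{0,2}_{A}|^{2}dvol_{g}$ since $S$ is real. Running the computation of the previous paragraphs backwards gives $[\sqrt{-1}\La_{\w}F_{A},F^{0,2}_{A}]=-2\bar{\pa}_{A}\bar{\pa}^{\ast}_{A}F^{0,2}_{A}$, so its pairing with $F^{0,2}_{A}$ equals $-2\|\bar{\pa}^{\ast}_{A}F^{0,2}_{A}\|^{2}_{L^{2}(X)}$, and Proposition \ref{P1}(1) rewrites $\|\bar{\pa}^{\ast}_{A}F^{0,2}_{A}\|^{2}_{L^{2}(X)}=\tfrac{1}{4}\|\bar{\pa}_{A}\La_{\w}F_{A}\|^{2}_{L^{2}(X)}$. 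Plugging in and rearranging gives exactly (\ref{E12}).

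The argument is essentially bookkeeping, so I do not anticipate a genuine obstacle; the only points that demand care are the tracking of the scalar factors $\sqrt{-1}$ and $\tfrac{1}{2}$ and the sign conventions in the wedge-bracket on $\mathfrak{g}_{P}^{\C}$-valued forms (in particular $[\alpha,\beta]=-[\beta,\alpha]$ when $\beta$ is a $0$-form), together with the remark that $F^{2,0}_{A}=\overline{F^{0,2}_{A}}$ so that $|F^{0,2}_{A}|^{2}$ is the honest pointwise norm appearing under the integral in (\ref{E12}). Both (\ref{E14}) and (\ref{E12}) then follow once Propositions \ref{P3} and \ref{P1} are granted.
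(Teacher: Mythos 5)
Your proposal is correct and takes essentially the same route as the paper: substitute $\phi=F^{0,2}_{A}$ into Proposition \ref{P3}, use Proposition \ref{P1}(1) together with $\bar{\pa}_{A}^{2}=[F^{0,2}_{A},\cdot\,]$ and $\bar{\pa}_{A}F^{0,2}_{A}=0$ to identify $\De_{\bar{\pa}_{A}}F^{0,2}_{A}=-\tfrac{1}{2}[\sqrt{-1}\La_{\w}F_{A},F^{0,2}_{A}]$, which gives (\ref{E14}), and then pair with $F^{0,2}_{A}$ and integrate by parts to get (\ref{E12}). The only cosmetic difference is how the commutator pairing is evaluated: the paper starts from $\|\bar{\pa}_{A}\La_{\w}F_{A}\|^{2}_{L^{2}(X)}=\langle\bar{\pa}^{\ast}_{A}\bar{\pa}_{A}\La_{\w}F_{A},\La_{\w}F_{A}\rangle_{L^{2}(X)}$ and applies $\bar{\pa}^{\ast}_{A}$ twice to $F^{0,2}_{A}$, while you use adjointness of $\bar{\pa}_{A}\bar{\pa}^{\ast}_{A}$ and $\|\bar{\pa}^{\ast}_{A}F^{0,2}_{A}\|^{2}_{L^{2}(X)}=\tfrac{1}{4}\|\bar{\pa}_{A}\La_{\w}F_{A}\|^{2}_{L^{2}(X)}$ — the same bookkeeping with the same inputs.
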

\begin{proof}
Following Proposition \ref{P1}, we obtain that
\begin{equation*}
\begin{split}
\|\bar{\pa}_{A}\La_{\w}F_{A}\|_{L^{2}(X)}^{2}&=\langle\bar{\pa}^{\ast}_{A}\bar{\pa}_{A}\La_{\w}F_{A},\La_{\w}F_{A}\rangle_{L^{2}(X)}\\&=-\langle2\sqrt{-1}\ast [F^{0,2}_{A},\ast F_{A}^{0,2}],\La_{\w}F_{A}\rangle_{L^{2}(X)}\\
&=-\langle2\sqrt{-1} [\La_{\w}F_{A}, F^{0,2}_{A}], F_{A}^{0,2}\rangle_{L^{2}(X)}\\
\end{split}
\end{equation*}
The Weizenb\"{o}ck formula for $F^{0,2}_{A}$ yields
$$\na_{A}^{\ast}\na_{A}F_{A}^{0,2}+2SF_{A}^{0,2}+\frac{3}{2}[\sqrt{-1}\La_{\w}F_{A},F_{A}^{0,2}]=0.$$
If $X$ is closed, taking  the $L^{2}$-inner product of above identity with $F_{A}^{0,2}$ and integrating by parts, we then obtain (\ref{E12}).
\end{proof}

\subsection{Irreducible connections}
In this section, we first recall a definition of irreducible connection on a principal $G$-bundle $P$, where $G$ being a compact, semisimple Lie group. Given a connection $A$ on a principal $G$-bundle $P$ over $X$. We can define the stabilizer $\Ga_{A}$ of $A$ in the gauge group $\mathcal{G}_{P}$ by
$$\Ga_{A}:=\{g\in\mathcal{G}_{P}|g(A)=A\},$$
one also can see \cite{DK}. A connection $A$ called reducible if the connection $A$ whose stabilizer $\Ga_{A}$ is larger than the centre $C(G)$ of $G$. Otherwise, the connections are irreducible, they satisfy $\Ga_{A}\cong C(G)$. It's easy to see that a connection $A$ is irreducible when it admits no nontrivial covariantly constant Lie algebra-value $0$-form, i.e., $\ker d_{A}|_{\Om^{0}(X,\mathfrak{g}_{P})}=0$. Taubes introduced the number in \cite[Equation 6.3]{Taubes1988} to measure the irreducibility of $A$. We can defined the least eigenvalue $\la(A)$ of $d^{\ast}_{A}d_{A}$ as follow.
\begin{definition}\label{D2}
The least eigenvalue of $d_{A}^{\ast}d_{A}$ on $L^{2}$ section of $\Ga(\mathfrak{g}_{P})$ is 
\begin{equation}\label{E3}
\la(A):=\inf_{v\in\Ga(\mathfrak{g}_{P})\backslash\{0\}}\frac{\|d_{A}v\|^{2}}{\|v\|^{2}}.
 \end{equation}
\end{definition}
It is easy to see that the function $\la(A)$ depends only on  the connection $A$. We introduce the definition of $strong$ irreducible connection on a principal $G$-bundle $P$. 
\begin{definition}\label{D3}
We call $A$ a smooth $strong$ irreducible connection on $G$-bundle $P$ over a smooth $n$-dimensional, Riemannian manifold $X$, ($n\geq2$), if the least eigenvalue of $d_{A}^{\ast}d_{A}$ on $L^{2}$ section of $\Ga(\mathfrak{g}_{P})$ has a positive lower bound, i.e, there is a constant $\la_{0}=\la_{0}(P,g)\in(0,\infty)$ such that $\la(A)\geq\la_{0}$.
\end{definition}

The Sobolev norms $L^{p}_{k,A}$,\ where $1\leq p<\infty$ and $k$ is an integer, with respect to the connections defined as:
\begin{equation}\nonumber
\|u\|_{L^{p}_{k,A}(X)}:=\big{(}\sum_{j=0}^{k}\int_{X}|\na^{j}_{A}u|^{p}dvol_{g}\big{)}^{1/p},
\end{equation}
where $\na_{A}$ is the covariant derivative induced by the connection $A$ on $P$ and the Levi-Civita connection defined by the Riemannian metric $g$ on $T^{\ast}X$ and $\na^{j}_{A}:=\na_{A}\circ\ldots\circ\na_{A}$ (repeated $j$ times for $j\geq0$). 
\begin{remark}\label{R2.5}
Let $A$ be a irreducible connection on the principal $G$-bundle over a compact manifold $X$, i.e.,  $\ker d_{A}|_{\Om^{0}(X,\mathfrak{g}_{P})}=0$. Then we can assume that $\ker d_{A}|_{\Om^{0}(X,\mathfrak{g}_{P}^{\C})}=0$, where $\mathfrak{g}_{P}^{\C}:=\mathfrak{g}_{P}\otimes\C$. We denote $s$ by a section of $\Ga(\mathfrak{g}_{P}^{\C})$, i.e., $s$ can be seen as a function over $X$ which takes value in the Lie algebra $\mathfrak{g}\otimes\C$.  Here $\mathfrak{g}\otimes\C$ is the complexification of Lie algebra $\mathfrak{g}$, See \cite[Pages 11--12]{Samelson} . Thus in a local coordinate, there exists two $s_{1},s_{2}\in\mathfrak{g}$ such that $s=s_{1}+\rm{i}s_{2}$. Then  we have two identities,
$|\na_{A}s|^{2}=|\na_{A}s_{1}|^{2}+|\na_{A}s_{2}|^{2}$ and $|s|^{2}=|s_{1}|^{2}+|s_{2}|^{2}$. Since $A$ is irreducible,  $\|\na_{A}s\|^{2}_{L^{2}(X)}\geq\la\|s\|^{2}_{L^{2}(X)}$.
\end{remark}
By the similar method of the proof of \cite[Proposition A.3]{Feehan} or \cite[Lemma 7.1.24]{DK}, the least eigenvalue $\la(A)$ of $d^{\ast}_{A}d_{A}$  with respect to connection $A$ is continuous in $L^{p}_{loc}$-topology for $2\leq p<4$. Due to a result of Sedlacek \cite[Theorem 4.3]{Sedlacek}, we then have
\begin{proposition}\label{P2.6}
Let $G$ be a compact,semisimple Lie group and P be a principal $G$-bundle over a closed, smooth, oriented, four-dimensional Riemannian manifold $X$ with a Riemannian metric $g$. If $\{A_{i}\}_{i\in\mathbb{N}}$ is a sequence $C^{\infty}$ connection on $P$ and the curvatures $\|F^{+}_{A_{i}}\|_{L^{2}(X)}$ converge to zero as $i\rightarrow\infty$, then there are a finite set of points, $\Sigma=\{x_{1},\ldots,x_{L}\}\subset X$ and a subsequence $\Xi\subset\N$, we also denote by $\{A_{i}\}$, a sequence gauge transformatin $\{g_{i}\}_{i\in\Xi}$ such that, $g_{i}(A_{i})\rightarrow A_{\infty}$, a anti-self-dual connection on $P\upharpoonright_{X\backslash\Sigma}$ in the $L^{2}_{1}$-topology over $X\backslash\Sigma$. Furthermore, we have
$$\lim_{i\rightarrow\infty}\la(A_{i})=\la(A_{\infty}),$$
where $\la(A)$ is as in Definition \ref{D2}.
\end{proposition}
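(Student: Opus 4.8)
The plan is to separate the statement into a compactness assertion and an eigenvalue assertion. The compactness part --- the finite set $\Sigma$, the subsequence, the gauge transformations $g_{i}$, and the strong $L^{2}_{1}$-limit $A_{\infty}$ over $X\setminus\Sigma$ --- I would take straight from Sedlacek's theorem \cite[Theorem 4.3]{Sedlacek}, whose hypothesis holds here: on a closed four-manifold the Chern--Weil identity gives $\|F_{A_{i}}\|_{L^{2}(X)}^{2}=2\|F_{A_{i}}^{+}\|_{L^{2}(X)}^{2}-c$ with $c=c(P)$ a topological constant, so the $A_{i}$ have uniformly bounded Yang--Mills energy and $\{A_{i}\}$ is in fact a minimizing sequence for it. Anti-self-duality of $A_{\infty}$ over $X\setminus\Sigma$ is then automatic: $F_{g_{i}(A_{i})}\to F_{A_{\infty}}$ strongly in $L^{2}$ on compact subsets of $X\setminus\Sigma$, while $\|F^{+}_{g_{i}(A_{i})}\|_{L^{2}(X)}=\|F^{+}_{A_{i}}\|_{L^{2}(X)}\to 0$, so $F^{+}_{A_{\infty}}$ vanishes on each such subset; by Uhlenbeck's removable-singularity theorem $A_{\infty}$ extends, up to gauge, to a smooth anti-self-dual connection on a bundle $P_{\infty}$ over all of $X$, and it is on $P_{\infty}$ that $\la(A_{\infty})$ is understood.

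For the equality $\lim_{i}\la(A_{i})=\la(A_{\infty})$, I first note that $\la$ is gauge invariant and the $L^{2}$ Rayleigh quotients of Definition \ref{D2} ignore the Lebesgue-null set $\Sigma$, so after replacing $A_{i}$ by $g_{i}(A_{i})$ I may assume $A_{i}\to A_{\infty}$ strongly in $L^{2}_{1}$, hence in $L^{p}$ for $2\le p<4$, on compact subsets of $X\setminus\Sigma$. I would then prove the two one-sided bounds. For $\limsup_{i}\la(A_{i})\le\la(A_{\infty})$: given $\delta>0$ choose a section $v$ with Rayleigh quotient at most $\la(A_{\infty})+\delta$ and, using that a point has vanishing $L^{2}_{1}$-capacity in dimension four, cut $v$ off near $\Sigma$ to obtain $v'$ supported in a compact $K\subset X\setminus\Sigma$ with Rayleigh quotient at most $\la(A_{\infty})+2\delta$; since $v'$ is fixed and $A_{i}\to A_{\infty}$ on $K$ one has $d_{A_{i}}v'=dv'+[A_{i},v']\to d_{A_{\infty}}v'$ in $L^{2}(X)$, so $v'$ forces $\limsup_{i}\la(A_{i})\le\la(A_{\infty})+2\delta$, and $\delta\downarrow 0$ ends this side.

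For $\liminf_{i}\la(A_{i})\ge\la(A_{\infty})$: let $v_{i}$ be a unit-$L^{2}$ least-eigenvalue section of $d_{A_{i}}^{\ast}d_{A_{i}}$, so $\|\na_{A_{i}}v_{i}\|_{L^{2}(X)}^{2}=\la(A_{i})$, which the previous step bounds. On compacts of $X\setminus\Sigma$ the connections $A_{i}$ are controlled, and (with the uniform $L^{4}(X)$-bound on $v_{i}$ recorded below) $\{v_{i}\}$ is bounded in $L^{2}_{1}$ there, so a diagonal extraction along an exhaustion of $X\setminus\Sigma$ produces $v_{\infty}$ with $v_{i}\rightharpoonup v_{\infty}$ weakly in $L^{2}_{1,loc}$ and $v_{i}\to v_{\infty}$ in $L^{2}_{loc}$ over $X\setminus\Sigma$. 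On each such $K$ the commutators $[A_{i},v_{i}]$ converge strongly (as $A_{\infty}$ is smooth, hence $L^{\infty}_{loc}$, on $X\setminus\Sigma$), so $d_{A_{i}}v_{i}=dv_{i}+[A_{i},v_{i}]\rightharpoonup d_{A_{\infty}}v_{\infty}$ weakly in $L^{2}(K)$; weak lower semicontinuity and exhaustion then give $\|d_{A_{\infty}}v_{\infty}\|_{L^{2}(X)}^{2}\le\liminf_{i}\la(A_{i})$. Since points have zero $L^{2}_{1}$-capacity, $v_{\infty}$ extends to an admissible section over $X$ in the Rayleigh quotient defining $\la(A_{\infty})$; thus, provided no $L^{2}$ mass of $v_{i}$ escapes to $\Sigma$, i.e. $\|v_{\infty}\|_{L^{2}(X)}=1$, we obtain $\la(A_{\infty})\le\|d_{A_{\infty}}v_{\infty}\|_{L^{2}(X)}^{2}\le\liminf_{i}\la(A_{i})$.

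The step I expect to be the main obstacle is exactly this non-concentration claim: a priori the eigensections $v_{i}$ could concentrate at the bubble set $\Sigma$, leaving $\|v_{\infty}\|_{L^{2}(X)}<1$ (or even $v_{\infty}=0$), which would weaken or destroy the last inequality; and this is the point where the $L^{p}_{loc}$-continuity of $\la$ recalled before the proposition does not apply verbatim, since the convergence $A_{i}\to A_{\infty}$ genuinely degenerates across $\Sigma$. I would resolve it with Kato's inequality: $\bigl|\na|v_{i}|\bigr|\le|\na_{A_{i}}v_{i}|$ pointwise, together with $\||v_{i}|\|_{L^{2}(X)}=1$, bounds the \emph{scalar} functions $|v_{i}|$ in $L^{2}_{1}(X)$ uniformly --- with no reference to the wild behaviour of $A_{i}$ near $\Sigma$ --- which by Sobolev also yields the uniform $L^{4}(X)$-bound on $v_{i}$ used above, and by Rellich compactness on the closed manifold $X$ forces $|v_{i}|\to|v_{\infty}|$ strongly in $L^{2}(X)$; hence $\|v_{\infty}\|_{L^{2}(X)}=\lim_{i}\|v_{i}\|_{L^{2}(X)}=1$ and no mass is lost. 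Combining this with the upper bound yields $\la(A_{\infty})\le\liminf_{i}\la(A_{i})\le\limsup_{i}\la(A_{i})\le\la(A_{\infty})$, the asserted equality $\lim_{i}\la(A_{i})=\la(A_{\infty})$.
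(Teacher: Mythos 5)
Your proposal is correct, and its first half coincides with the paper's: the paper gives no written proof of this proposition, obtaining the bubbling set, the gauge transformations and the limit $A_{\infty}$ directly from Sedlacek's Theorem 4.3 (the same Chern--Weil observation that $\|F^{+}_{A_{i}}\|_{L^{2}(X)}\to 0$ forces bounded, indeed minimizing, Yang--Mills energy), exactly as you do. Where you genuinely diverge is the eigenvalue claim. The paper asserts that $\lambda(\cdot)$ is continuous under $L^{p}_{loc}$ convergence ``by the similar method of the proof of'' Feehan's Proposition A.3 (or Donaldson--Kronheimer, Lemma 7.1.24) and combines this with Sedlacek; the intended mechanism is the quantitative two-sided comparison estimate with logarithmic cutoff functions, of the type $\lambda(A)\leq(1+\eta)\lambda(A_{0})+c\big((1+\eta)(C+\delta^{2})+(1+\eta^{-1})L\rho^{2}\lambda(A)\big)(1+\lambda(A_{0}))$, which the paper writes out only for the analogous eigenvalue $\mu(A)$ in Proposition \ref{P3.16} and Corollary \ref{C3.9} and then lets $\rho\to0$, $\eta\to0$. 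You instead prove the two one-sided semicontinuity bounds directly along the Uhlenbeck sequence: the upper bound by cutting off a near-minimizer for $A_{\infty}$ (the same log-cutoff/zero-capacity input the paper uses), and the lower bound by extracting a weak limit of unit eigensections and ruling out concentration at $\Sigma$ via Kato's inequality and Rellich compactness applied to the scalar functions $|v_{i}|$, which is not how the paper or Feehan argue. The comparison-inequality route buys a uniform estimate valid for any two $L^{4}$-close connections, reusable as in Proposition \ref{P3.16}; your route is more self-contained and avoids the $(\eta,\rho,L)$ bookkeeping, at the cost of invoking existence of least eigensections and the non-concentration step, both of which you handle correctly. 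Two details to state explicitly when writing it up: Sedlacek's theorem as stated gives weak $L^{2}_{1}$ convergence on compact subsets of $X\setminus\Sigma$ (the paper quotes it as strong convergence as well, so this is inherited rather than introduced by you), and all transfers of sections between $P$ and $P_{\infty}$ should be routed through the bundle isomorphism over $X\setminus\Sigma$, noting that $\lambda$ and the pointwise norms $|v_{i}|$ are gauge invariant, so the Kato--Rellich argument applies verbatim to the gauge-transformed eigensections.
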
 
\begin{theorem}
Let $X$ be a closed, Riemannian 4-manifold, $P$ be a principal $G$-bundle with $G$ being compact, semisimple Lie group. Suppose that the ASD connections $A\in M(P,g)$ are $strong$ irreducible connections in the sense of Definition \ref{D3}, then there are positive constants $\la_{0}$ and $\varepsilon$ such that
\begin{equation*}
\begin{split}
&\la(A)\geq\la_{0},\ \forall [A]\in M(P,g),\\
&\la(A)\geq\frac{\la_{0}}{2},\ \forall [A]\in\mathcal{B}_{\varepsilon}(P,g).\\
\end{split}
\end{equation*}
\end{theorem}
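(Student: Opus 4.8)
The plan is to argue by contradiction, combining the continuity of $\la(\cdot)$ under weak convergence (Proposition \ref{P2.6}) with the hypothesis that every ASD connection in $M(P,g)$ is $strong$ irreducible. First I would establish the first inequality: suppose no uniform lower bound $\la_0$ exists over $M(P,g)$. Then there is a sequence $[A_i]\in M(P,g)$ with $\la(A_i)\to 0$. Since each $A_i$ is ASD we have $F^+_{A_i}=0$, so in particular $\|F^+_{A_i}\|_{L^2(X)}\to 0$ trivially, and Proposition \ref{P2.6} applies: after passing to a subsequence and applying gauge transformations, $g_i(A_i)\to A_\infty$ in $L^2_1$ over $X\setminus\Sigma$ for a finite bubbling set $\Sigma$, with $A_\infty$ ASD on $P\!\upharpoonright_{X\setminus\Sigma}$, and $\la(A_i)\to\la(A_\infty)$. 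Hence $\la(A_\infty)=0$. By Uhlenbeck removable singularities, $A_\infty$ extends (on a possibly different bundle $P_\infty$ with $\langle p_1(P_\infty),[X]\rangle\le\langle p_1(P),[X]\rangle$) to a smooth ASD connection on all of $X$; here one uses that the set of characteristic numbers $\kappa$ realizable below a fixed bound is finite, so only finitely many bundles $P_\infty$ occur. But then $[A_\infty]\in M(P_\infty,g)$, so by hypothesis $A_\infty$ is $strong$ irreducible, i.e. $\la(A_\infty)\ge\la_0(P_\infty,g)>0$, contradicting $\la(A_\infty)=0$. Taking $\la_0$ to be the minimum of the finitely many constants $\la_0(P_\infty,g)$ over the admissible bundles $P_\infty$ gives the uniform bound over $M(P,g)$.

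For the second inequality, I would again argue by contradiction over the larger stratum $\mathcal{B}_\varepsilon(P,g)$. Suppose that for every $\varepsilon=1/k$ there is $[A_k]\in\mathcal{B}_{1/k}(P,g)$ with $\la(A_k)<\la_0/2$. Then $\|F^+_{A_k}\|_{L^2(X)}\le 1/k\to 0$, so Proposition \ref{P2.6} again produces a weak limit $A_\infty$ which is ASD on $X\setminus\Sigma$, extends across $\Sigma$ by Uhlenbeck, and satisfies $\la(A_\infty)=\lim_k\la(A_k)\le\la_0/2$. Since $A_\infty$ is an honest ASD connection on one of the finitely many admissible bundles, the first part (or directly the $strong$ irreducibility hypothesis) forces $\la(A_\infty)\ge\la_0$, contradicting $\la(A_\infty)\le\la_0/2$. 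Hence there is some $\varepsilon>0$ with $\la(A)\ge\la_0/2$ for all $[A]\in\mathcal{B}_\varepsilon(P,g)$.

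The main obstacle, and the point requiring the most care, is the bookkeeping of bundle types under the bubbling degeneration: the limit connection $A_\infty$ need not live on $P$ itself but on a bundle with smaller (or equal) characteristic number, so the hypothesis of $strong$ irreducibility must be invoked for each such bundle, and one needs the finiteness of the relevant set of bundles to take a common constant $\la_0$. A secondary technical point is the continuity statement $\la(A_i)\to\la(A_\infty)$ across the bubbling: this is exactly the content built into Proposition \ref{P2.6} (via the $L^p_{loc}$-continuity of $\la$ for $2\le p<4$ and Sedlacek's convergence theorem), so I would simply cite it, but one should check that the energy concentrated at $\Sigma$ does not affect the $0$-form eigenvalue — which holds because a covariantly-constant section away from $\Sigma$ extends across a finite set, so no eigenvalue is lost in the limit. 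Everything else is a routine contradiction argument.
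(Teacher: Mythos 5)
Your argument for the second inequality is essentially the paper's: assume no such $\varepsilon$ exists, take $A_i$ with $\|F^{+}_{A_i}\|_{L^{2}(X)}\to 0$ and $\lambda(A_i)<\lambda_0/2$, invoke Proposition \ref{P2.6} to get $\lambda(A_i)\to\lambda(A_\infty)$ for an Uhlenbeck--Sedlacek limit $A_\infty$, and contradict $\lambda(A_\infty)\ge\lambda_0$. Two remarks on where you diverge. First, your compactness argument for the bound over $M(P,g)$ is unnecessary: Definition \ref{D3} already packages the uniformity, since the constant $\lambda_0=\lambda_0(P,g)$ there depends only on $(P,g)$ and not on $A$, so the hypothesis that every $[A]\in M(P,g)$ is \emph{strong} irreducible gives the first inequality immediately --- which is exactly how the paper disposes of it in one line. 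Second, your bundle bookkeeping (extending $A_\infty$ across $\Sigma$ to a possibly different bundle $P_\infty$ of smaller characteristic number, and taking a minimum of $\lambda_0(P_\infty,g)$ over the finitely many admissible bundles) is more careful than the paper, which simply asserts $\lambda(A_\infty)\ge\lambda_0$; but be aware that this step, as you state it, invokes \emph{strong} irreducibility for $M(P_\infty,g)$, which the theorem's hypothesis (stated only for $M(P,g)$) does not literally grant --- this is a gap the paper shares implicitly, and closing it honestly requires either assuming the hypothesis for all bundles arising in the Uhlenbeck compactification (as Feehan does) or restricting to the case $P_\infty=P$.
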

\begin{proof}
By the definition of $strong$ irreducible connection, there is a positive constant $\la_{0}=\la_{0}(P,g)$ such that $\la(A)\geq\la_{0}$, $\forall [A]\in M(P,g)$.

Suppose that the constant $\varepsilon\in(0,1]$ does not exist. We may then choose a sequence $\{A_{i}\}_{i\in\N}$ of connection on $P$ such that $\|F^{+}_{A_{i}}\|_{L^{2}(X)}\rightarrow0$ and $\la(A_{i})\rightarrow0$ as $i\rightarrow\infty$. Since $\lim_{i\rightarrow\infty}\la(A_{i})=\la(A_{\infty})$ and $\la(A_{\infty})\geq\la_{0}$, then it is contradict to our initial assumption regarding the sequence $\{A_{i}\}_{i\in\N}$. In particular, the preceding argument shows that the desired constant $\varepsilon$ exists.
\end{proof}
Friedman and Morgan  introduced the $generic$ K\"{a}hler metric which ensures the connections on the compactification of moduli space of ASD connections on $E$, $\bar{M}(P,g)$,  are irreducible, See \cite[Chapter IV]{Friedman-Morgan}. The authors also given an example on a K\"{a}hler surface which admits a $generic$ K\"{a}hler metric. Fix an algebraic surface $S$ and an ample line bundle $L$ on $S$. For every integer $c$ we have defined the moduli space $\mathcal{M}_{c}(S,L)$ of $L$-stable rank two holomorphic vector bundles $V$ on $S$ and such that $c_{1}(V)=0$, $c_{2}(V)=c$. Next let us determine when a Hodge metric with K\"{a}hler form $\w$ admits reducible ASD connections. Corresponding to such a connection is an associated ASD harmonic $1$-form $\a$, well-defined up to $\pm1$, representing an integral cohomology class, which by the description of $\Om^{2,-}(X)$ is of type $(1,1)$ and orthogonal to $\w$. Thus Friedman-Morgan proved for an integer $c>0$, there is an open dense subset $\mathcal{D}$ of the cone of ample divisors on $S$ such that if $g$ is a Hodge metric whose K\"{a}hler form lies in $\mathcal{D}$, $g$ is a $generic$ metric in the sense of Definition \ref{D1}, See \cite[Chapter IV, Proposition 4.8]{Friedman-Morgan}. 
\begin{definition}\label{D1}
Let  $X$ be a compact K\"{a}hler surface, $P$ be a principal $G$-bundle over $X$ with $c_{2}(P)=c$. We say that a K\"{a}hler metric $g$ on $X$ is $generic$ if for every $G$-bundle $\tilde{P}$ over $X$ with $0<c_{2}(\tilde{P})\leq c$, there are no reducible ASD connections on $\tilde{P}$. 
\end{definition}

For a compact K\"{a}hler surface $X$ we have a moduli space of ASD connections $M(P,g)$. The Donaldson-Uhlenbeck compactification $\bar{M}(P,g)$ of $M(P,g)$ contained in the disjoint union
\begin{equation}\nonumber
\bar{M}(P,g)\subset\cup(M(P_{l},g)\times Sym^{l}(X)),
\end{equation}
Following \cite[Theorem 4.4.3]{DK}, the space $\bar{M}(P,g)$ is compact. 
\begin{lemma}
Let $X$ be a compact, simply-connected, K\"{a}hler surface with a $generic$ K\"{a}hler metric, $P$ be a $SO(3)$-bundle over $X$. If the second Stiefel-Whitney classes $w_{2}\in H^{2}(X;\mathbb{Z}/2\mathbb{Z})$ is non-trivial, then $\la(A)>0$ for any $[A]\in\bar{M}(P,g)$. 	
\end{lemma}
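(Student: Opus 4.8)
The plan is to argue by contradiction using the compactness of $\bar{M}(P,g)$ together with the continuity of $\la(\cdot)$ established in Proposition \ref{P2.6}, and then to rule out the limiting possibilities by invoking the $generic$ hypothesis and the non-triviality of $w_2(P)$. Suppose the conclusion fails. Then there is a sequence $[A_i]\in\bar{M}(P,g)$ with $\la(A_i)\to 0$. Since each point of $\bar{M}(P,g)$ lies in some product $M(P_l,g)\times Sym^l(X)$, and there are only finitely many bundles $P_l$ occurring (because $0\le c_2(P_l)\le c_2(P)$), after passing to a subsequence we may assume all $[A_i]$ lie in a fixed stratum $M(P_l,g)$ with an associated ASD connection $A_i$ on $P_l$ and a fixed $l$; moreover by compactness of $\bar{M}(P,g)$ we may assume $[A_i]\to[A_\infty]\in\bar M(P,g)$.

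Next I would apply the bubbling/continuity statement: since the $A_i$ are ASD, in particular $\|F^+_{A_i}\|_{L^2}=0\to0$, so Proposition \ref{P2.6} applies and gives, up to gauge and a further subsequence, $g_i(A_i)\to A_\infty'$ in $L^2_1$ over $X\setminus\Sigma$ for a finite set $\Sigma$, with $A_\infty'$ ASD on $P'\!\upharpoonright_{X\setminus\Sigma}$ and $\la(A_i)\to\la(A_\infty')$. Hence $\la(A_\infty')=0$, i.e. $\ker d_{A_\infty'}|_{\Om^0}\neq 0$ — after removing singularities and using that $\bar M$ is the Uhlenbeck compactification, $A_\infty'$ extends to a genuine ASD connection on some $SO(3)$-bundle $\tilde P$ over $X$ with $0\le c_2(\tilde P)\le c_2(P)$, and it is \emph{reducible}.

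Now I would split into two cases according to whether $c_2(\tilde P)=0$ or $c_2(\tilde P)>0$. If $c_2(\tilde P)>0$, the existence of a reducible ASD connection on $\tilde P$ contradicts the definition of a $generic$ metric (Definition \ref{D1}) directly. If $c_2(\tilde P)=0$, then bubbling has absorbed all the instanton charge, and the reducible ASD connection $A_\infty'$ on $\tilde P$ splits as $\tilde P=\ell\oplus\mathbb{R}$ (or the $SO(3)$-analogue) with an associated ASD harmonic $1$-form; since $X$ is simply-connected, $b_1(X)=0$, so this harmonic $1$-form vanishes and $A_\infty'$ is flat, which forces $w_2(\tilde P)=0$. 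But $w_2$ is a topological invariant unchanged under Uhlenbeck degeneration (bubbling changes $p_1$, i.e. $c_2$, not $w_2$ mod $2$), so $w_2(\tilde P)=w_2(P)$, contradicting the hypothesis that $w_2(P)$ is non-trivial. Either way we reach a contradiction, so $\la(A)>0$ for all $[A]\in\bar M(P,g)$.

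The main obstacle is the bookkeeping in the degeneration step: one must be careful that the limit $A_\infty'$ on $X\setminus\Sigma$ really extends across $\Sigma$ to an ASD connection on a global $SO(3)$-bundle $\tilde P$, that the least-eigenvalue continuity $\la(A_i)\to\la(A_\infty')$ survives the passage to the Uhlenbeck limit (not merely the $L^2_1$-limit away from $\Sigma$ — here one needs that a nontrivial covariantly constant section on the punctured manifold extends, which uses removability of singularities for the reducible limit), and that the second Stiefel–Whitney class is genuinely preserved under this compactification. Granting the framework of \cite{DK} and \cite{Friedman-Morgan} these are standard, so the proof is essentially the contradiction argument above.
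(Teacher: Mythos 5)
Your argument has the same two-case core as the paper's proof (genericity, i.e.\ Definition \ref{D1}, excludes reducible ASD connections on bundles of positive charge; nontriviality of $w_{2}(P)$ excludes the remaining flat/trivial possibility), but the step you use to close the charge-zero case is wrong as stated. A reduction of an $SO(3)$-bundle $\tilde P$ splits $\mathfrak{g}_{\tilde P}\cong\mathbb{R}\oplus\ell$, and the object attached to a reducible ASD connection is the curvature of the induced connection on the line bundle $\ell$: an anti-self-dual harmonic \emph{two}-form representing $c_{1}(\ell)$ (this is the form the paper describes just before Definition \ref{D1} as being of type $(1,1)$ and orthogonal to $\omega$). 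Simple connectivity, i.e.\ $b_{1}(X)=0$, says nothing about such a form, and on a K\"ahler surface $b^{-}$ is in general positive, so you cannot conclude flatness this way. The case is closed correctly either by noting that for an ASD connection the $L^{2}$-energy is a universal multiple of the charge, so charge zero forces $F=0$, and a flat $SO(3)$-connection on a simply-connected manifold is the product connection on the product bundle, whence $w_{2}(\tilde P)=0$ --- this is exactly the paper's appeal to \cite[Corollary 4.3.15]{DK}; or by noting that $c_{1}(\ell)$ is an integral anti-self-dual class, hence $c_{1}(\ell)^{2}\le 0$ with equality only if the class vanishes (and $H^{2}(X;\mathbb{Z})$ is torsion-free since $X$ is simply connected), while charge zero gives $c_{1}(\ell)^{2}=p_{1}(\tilde P)=0$, so $\ell$ is trivial and again $w_{2}(\tilde P)=0$.

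A secondary remark: the sequence/Uhlenbeck-limit machinery is not needed for this Lemma, which is a pointwise statement. A point of $\bar{M}(P,g)$ is already (the orbit of) an ASD connection $A$ on a bundle $P_{l}$ over all of $X$ with $w_{2}(P_{l})=w_{2}(P)$ and charge between $0$ and $c$; on a compact manifold $\la(A)=0$ forces a nonzero covariantly constant section (the spectrum of $d_{A}^{\ast}d_{A}$ is discrete), i.e.\ $A$ is reducible, and the two cases above then give the contradiction directly, which is how the paper argues. The contradiction-by-sequences argument you propose essentially duplicates the proof of the uniform bound in Theorem \ref{T4}, which the paper deduces \emph{from} this Lemma; running it here also makes you depend on extending both the eigenvalue continuity of Proposition \ref{P2.6} and the limiting parallel section across the bubbling set, issues you flag but do not resolve.
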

\begin{proof}
The only reducible anti-self-dual connection on a principal $SO(3)$-bundle over $X$ is the product connection on the product bundle $P=X\times SO(3)$ by \cite[Corollary 4.3.15]{DK} and the latter possibility is excluded by our hypothesis in this case that $w_{2}(P)\neq 0$. Then the conclusion is a consequence of the definition of $generic$ K\"{a}hler metric.
\end{proof} 
Following Feehan's idea, combining $\bar{M}(P,g)$ is compact and $\la(A)$ is continuous under the Uhlenbeck topology for the connection $[A]\in\bar{M}(P,g)$, then we have
\begin{theorem}\label{T4}
Let $X$ be a compact, simply-connected, K\"{a}hler surface with a $generic$ K\"{a}hler metric, $P$ be a $SO(3)$-bundle over $X$. If the second Stiefel-Whitney classes $w_{2}(P)\in H^{2}(X;\mathbb{Z}/2\mathbb{Z})$ is non-trivial,  then there are positive constants $\mu_{0}=\mu_{0}(P,g)$ and $\varepsilon=\varepsilon(P,g)$ such that
\begin{equation*}
\begin{split}
&\la(A)\geq\la_{0},\ \forall [A]\in M(P,g),\\
&\la(A)\geq\frac{\la_{0}}{2},\ \forall [A]\in\mathcal{B}_{\varepsilon}(P,g).\\
\end{split}
\end{equation*}
\end{theorem}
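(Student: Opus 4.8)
The plan is to mirror the proof of the unnamed theorem immediately preceding it (the one for general closed Riemannian $4$-manifolds), now substituting the compactified moduli space $\bar{M}(P,g)$ for $M(P,g)$ and invoking the continuity of $\la(A)$ under the Uhlenbeck limit. First I would observe that, by the preceding Lemma, $\la(A)>0$ for every $[A]\in\bar{M}(P,g)$, since the generic hypothesis together with $w_{2}(P)\ne 0$ rules out the only reducible ASD connection (the product connection). Next, I would use that $\bar{M}(P,g)$ is compact (by \cite[Theorem 4.4.3]{DK}) and that $[A]\mapsto\la(A)$ is continuous on $\bar{M}(P,g)$ in the Uhlenbeck topology — this continuity being exactly the content of Proposition \ref{P2.6}, which identifies $\lim_i\la(A_i)=\la(A_\infty)$ along Uhlenbeck-convergent sequences — to conclude that $\la$ attains a positive minimum $\la_{0}=\la_{0}(P,g)>0$ on $\bar{M}(P,g)$, and in particular on $M(P,g)\subset\bar{M}(P,g)$. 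This establishes the first inequality and shows that every $[A]\in M(P,g)$ is a \emph{strong} irreducible connection in the sense of Definition \ref{D3}.

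For the second inequality, I would argue by contradiction exactly as in the proof of the preceding theorem. If no $\varepsilon\in(0,1]$ worked, one could choose a sequence $\{A_i\}_{i\in\N}$ of connections on $P$ with $\|F^{+}_{A_i}\|_{L^2(X)}\to 0$ and $\la(A_i)\to 0$. By Proposition \ref{P2.6}, after passing to a subsequence and applying gauge transformations, $A_i$ converges (in $L^2_1$ away from a finite bubbling set) to an ASD connection $A_\infty$ on $P\!\upharpoonright_{X\setminus\Sigma}$, which extends (possibly on a bundle $P_l$ with smaller $c_2$) to a point $[A_\infty]\in\bar{M}(P,g)$, and $\la(A_i)\to\la(A_\infty)$. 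But $\la(A_\infty)\ge\la_0>0$ by the first part, contradicting $\la(A_i)\to 0$. Hence the desired $\varepsilon=\varepsilon(P,g)$ exists and $\la(A)\ge\la_0/2$ for all $[A]\in\mathcal{B}_\varepsilon(P,g)$.

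The one genuine subtlety — and the point I would be most careful about — is that the Uhlenbeck limit may live on a bundle $P_l$ with $0<c_2(P_l)\le c_2(P)$ rather than on $P$ itself, so the conclusion $\la(A_\infty)>0$ must be guaranteed on \emph{every} such $P_l$. This is precisely why Definition \ref{D1} of a \emph{generic} metric is phrased uniformly over all $G$-bundles $\tilde P$ with $0<c_2(\tilde P)\le c$, and why the preceding Lemma must be read as applying to each stratum $M(P_l,g)$ appearing in the decomposition $\bar{M}(P,g)\subset\bigcup_l \big(M(P_l,g)\times \mathrm{Sym}^l(X)\big)$; one also needs that a product (reducible) limit connection on a lower stratum is still excluded, which again follows from $w_2(P)\ne 0$ being preserved under Uhlenbeck degeneration together with \cite[Corollary 4.3.15]{DK}. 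Once this uniformity is in place, compactness of $\bar{M}(P,g)$ and continuity of $\la$ close the argument as above. (I note the statement as typed writes $\mu_0=\mu_0(P,g)$ in the hypothesis but then uses $\la_0$ in the displayed inequalities; I would simply use $\la_0$ throughout.)
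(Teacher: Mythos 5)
Your proposal is correct and takes essentially the same route as the paper: the paper likewise combines the preceding Lemma (generic metric plus $w_{2}(P)\neq 0$ excluding reducibles, so $\la(A)>0$ on $\bar{M}(P,g)$), compactness of $\bar{M}(P,g)$, and continuity of $\la(\cdot)$ in the Uhlenbeck topology to obtain the uniform bound $\la_{0}$ on $M(P,g)$, and then the same contradiction argument via Proposition \ref{P2.6} yields $\la(A)\geq\la_{0}/2$ on $\mathcal{B}_{\varepsilon}(P,g)$. Your careful remark about the lower strata $M(P_{l},g)$ and the uniformity built into Definition \ref{D1} is more explicit than the paper's one-sentence justification, but it is the same argument.
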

\subsection{An estimate on Yang-Mills connections}\label{S1}
\begin{lemma}\label{L1}
Let $X$ be a closed, smooth, four-manifold with a Riemannian metric $g$, $A$ be a connection on a principal $G$-bundle $P$ over $X$ with $G$ being a compact, semisimple Lie group. There are positive constants $\la=\la(g,P)$ and  $C=C(\la,g,P)$ with following significance. If  $\la(A)\geq\la$, where $\la(A)$ is as in (\ref{E3}), then for any section $v$ on $\Ga(\mathfrak{g}_{P})$, 
\begin{equation}\label{E6}
\|v\|_{L^{2}_{2}(X)}\leq C\|\na_{A}^{\ast}\na_{A}v\|_{L^{2}(X)}.
\end{equation}
\end{lemma}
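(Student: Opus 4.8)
The plan is to establish the a priori estimate \eqref{E6} by combining the lower eigenvalue bound on $d_A^\ast d_A$ with elliptic estimates for the Bochner Laplacian $\na_A^\ast\na_A$, and then running a standard contradiction/compactness argument only if necessary. First I would observe that $\la(A)\geq\la$ gives, by Definition \ref{D2}, the $L^2$ bound
\begin{equation}\nonumber
\la\|v\|_{L^2(X)}^2\leq\|d_Av\|_{L^2(X)}^2=\langle\na_A^\ast\na_A v,v\rangle_{L^2(X)}\leq\|\na_A^\ast\na_A v\|_{L^2(X)}\|v\|_{L^2(X)},
\end{equation}
using that $d_A^\ast d_A=\na_A^\ast\na_A$ on $\Om^0(X,\mathfrak g_P)$ (the Weitzenböck term for $0$-forms vanishes). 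Hence $\|v\|_{L^2(X)}\leq\la^{-1}\|\na_A^\ast\na_A v\|_{L^2(X)}$, which controls the zeroth-order term in the $L^2_2$ norm.

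Next I would control $\|\na_A v\|_{L^2(X)}$: integrating by parts, $\|\na_A v\|_{L^2(X)}^2=\langle\na_A^\ast\na_A v,v\rangle_{L^2(X)}\leq\|\na_A^\ast\na_A v\|_{L^2(X)}\|v\|_{L^2(X)}\leq\la^{-1}\|\na_A^\ast\na_A v\|_{L^2(X)}^2$, so the first-order term is also bounded purely in terms of $\|\na_A^\ast\na_A v\|_{L^2(X)}$, with no curvature dependence. The remaining task is the second-order term $\|\na_A^2 v\|_{L^2(X)}$. Here I would invoke the elliptic estimate for the second-order operator $\na_A^\ast\na_A$: there is a constant $C=C(g)$ (from the standard $L^2_2$ elliptic estimate on the closed manifold $X$, applied to the connection Laplacian) such that
\begin{equation}\nonumber
\|v\|_{L^2_{2,A}(X)}\leq C\big(\|\na_A^\ast\na_A v\|_{L^2(X)}+\|v\|_{L^2(X)}\big),
\end{equation}
where $L^2_{2,A}$ denotes the connection-dependent Sobolev norm; combining this with the $L^2$ bound above yields \eqref{E6}. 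One subtlety is that for the conclusion to be stated in terms of the fixed norm $\|v\|_{L^2_2(X)}$ rather than $\|v\|_{L^2_{2,A}(X)}$ one needs the difference between these norms to be controlled by $\|F_A\|_{L^2(X)}$ or a similar connection-invariant quantity; since the hypotheses of the lemma (as used downstream, e.g.\ for $[A]\in\mathcal B_\varepsilon(P,g)$) carry such curvature bounds, this is harmless, but it should be noted.

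The main obstacle, and the reason the constant $C$ is allowed to depend on $\la,g,P$ rather than being universal, is obtaining the elliptic $L^2_2$ estimate with constant independent of the particular connection $A$ in the relevant family. On a closed manifold the Gårding inequality for $\na_A^\ast\na_A$ in principle involves the curvature $F_A$ through commutator terms $[\na_A,\na_A^\ast]$; so the clean route is to first prove the estimate with constant depending on $\|F_A\|_{L^2(X)}$ (or $\|F_A\|_{L^\infty}$, via a covering/rescaling argument as in Feehan's appendix or \cite[Lemma 7.1.24]{DK}), and then note that in all applications $A$ ranges over $\mathcal B_\varepsilon(P,g)$ or $M(P,g)$ where the curvature is a priori bounded, so the constant is uniform. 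Alternatively, if one only wants \eqref{E6} for a single fixed $A$, the estimate is immediate from ellipticity and injectivity of $\na_A^\ast\na_A$ (which follows from $\la(A)>0$) via the open mapping theorem. I would present the argument in the first form, tracking the curvature dependence, so that the lemma is directly usable in the proof of Theorem \ref{T6}.
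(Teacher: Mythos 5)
Your argument is essentially the paper's own proof: the a priori elliptic estimate $\|v\|_{L^{2}_{2}(X)}\leq c\|\na_{A}^{\ast}\na_{A}v\|_{L^{2}(X)}+\|v\|_{L^{2}(X)}$ combined with $\|v\|_{L^{2}(X)}\leq\la^{-1}\|\na_{A}^{\ast}\na_{A}v\|_{L^{2}(X)}$ coming from the eigenvalue bound (the paper does not bother with a separate estimate of $\|\na_{A}v\|_{L^{2}(X)}$). Your additional remarks about the dependence of the elliptic constant on the connection $A$ (through its curvature) and about $L^{2}_{2}$ versus $L^{2}_{2,A}$ correctly flag a point the paper glosses over, but they do not change the route.
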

\begin{proof}
Since $\na_{A}^{\ast}\na_{A}$ is a elliptic operator of degree $2$, from a priori estimate of elliptic operator, for $p\geq1$ and $k\geq 0$, we have 
\begin{equation}\label{E7}
\|v\|_{L^{p}_{k+2}(X)}\leq c\|\na_{A}^{\ast}\na_{A}v\|_{L^{p}_{k}(X)}+\|v\|_{L^{p}(X)}
\end{equation}
We take $k=0$ and $p=2$, 
$$\|v\|_{L^{2}_{2}(X)}\leq c\|\na_{A}^{\ast}\na_{A}v\|_{L^{2}(X)}+\|v\|_{L^{2}(X)}.$$
By the definition of $\la(A)$, we also have
$$\|v\|_{L^{2}(X)}\leq \la^{-1}\|\na_{A}^{\ast}\na_{A}v\|_{L^{2}(X)}.$$
Combining the preceding inequalities yields (\ref{E6}).
\end{proof}
We construct a priori on the Yang-Mills connection under the condition of $\La_{\w}F_{A}$ is sufficiently small in $L^{2}$-norm.
\begin{proposition}\label{P4}
Let $X$ be a compact K\"{a}hler  surface, $A$ be  a Yang-Mills connection on a principal $G$-bundle $P$ over $X$ with $G$ being a compact, semisimple Lie group, $p\in(4,\infty)$. Let $q\in(4/3,2)$ be defined by $1/q=1/2+1/p$. There are positive constants $\la=\la(g,P)$ and  $\varepsilon=\varepsilon(\la,g,P)$ with following significance. If the curvature $F_{A}$ of the connection $A$ on $P$ obeying
$$\|\La_{\w}F_{A}\|_{L^{2}(X)}\leq\varepsilon,$$
and  $\la(A)\geq\la$, where $\la(A)$ is as in (\ref{E3}), then
\begin{equation}\label{E4}
\|F_{A}^{0,2}\|_{L^{p}(X)}\leq C\|F_{A}^{0,2}\|_{L^{q}(X)},
\end{equation}
where $C=C(\la,g,P,p)$ is a positive constant.
\end{proposition}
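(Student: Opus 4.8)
The plan is to derive a pointwise elliptic equation for $F_A^{0,2}$, apply the Weitzenböck formula together with the least-eigenvalue hypothesis $\la(A) \geq \la$ to control $\na_A F_A^{0,2}$ in $L^2$, and then bootstrap via Sobolev embedding to upgrade integrability from $L^q$ to $L^p$. First I would invoke Proposition \ref{P5}: since $A$ is Yang-Mills, $F_A^{0,2}$ satisfies
$$\na_A^{\ast}\na_A F_A^{0,2} = -\tfrac{3}{2}[\sqrt{-1}\La_\w F_A, F_A^{0,2}] - 2S F_A^{0,2}.$$
Writing $\psi := F_A^{0,2}$, this says $\na_A^{\ast}\na_A \psi = R_0 \ast \psi + R_1 \ast \psi$ where $R_1$ is bounded by $|S|$ (a fixed function of $g$) and $R_0$ is bounded pointwise by a universal constant times $|\La_\w F_A|$. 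The key structural point is that $\psi$ is a section of a vector bundle associated to $\mathfrak{g}_P$ (namely $\Om^{0,2}\otimes\mathfrak{g}_P^{\C}$), so we may treat the equation as $\na_A^{\ast}\na_A \psi = \mathrm{(zeroth order)}\cdot \psi$ and feed it into the elliptic estimate of Lemma \ref{L1}-type reasoning, except now applied to $\psi$ rather than to a $0$-form. Since $\La_\w F_A$ being small in $L^2$ does not immediately give a small $L^\infty$ bound, the coefficient $R_0$ must be handled by Hölder rather than by absorption in sup-norm.

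Next I would set up the bootstrap. Fix $p \in (4,\infty)$ and $q$ with $1/q = 1/2 + 1/p$; note $q \in (4/3, 2)$ and, crucially, $L^q$ on a $4$-manifold embeds into... well, rather, $L^2_1 \hookrightarrow L^4$ and $L^2_2 \hookrightarrow L^p$ fails for large $p$, so the right scheme is: from the equation, $\na_A^{\ast}\na_A\psi \in L^q$ with
$$\|\na_A^{\ast}\na_A \psi\|_{L^q(X)} \leq C\big(\|\La_\w F_A\|_{L^2(X)} \|\psi\|_{L^p(X)} + \|\psi\|_{L^q(X)}\big),$$
using $1/q = 1/2 + 1/p$ for the first term and $q < 2$ (hence $L^2 \hookrightarrow L^q$ up to constants, or directly $\|S\psi\|_{L^q} \leq \|S\|_{L^\infty}\|\psi\|_{L^q}$) for the second. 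Elliptic regularity for the second-order operator $\na_A^{\ast}\na_A$ on $\psi$ gives $\|\psi\|_{L^q_2(X)} \leq C(\|\na_A^{\ast}\na_A\psi\|_{L^q(X)} + \|\psi\|_{L^q(X)})$, and Sobolev embedding $L^q_2 \hookrightarrow L^p$ holds precisely when $\frac{1}{p} \geq \frac{1}{q} - \frac{2}{4} = \frac{1}{q} - \frac12$, i.e. $\frac1p \geq \frac1p$, borderline — so one takes $q$ slightly adjusted or uses the sharp endpoint Sobolev inequality, giving $\|\psi\|_{L^p(X)} \leq C\|\psi\|_{L^q_2(X)}$. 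Chaining these yields
$$\|\psi\|_{L^p(X)} \leq C\|\La_\w F_A\|_{L^2(X)}\|\psi\|_{L^p(X)} + C\|\psi\|_{L^q(X)}.$$
Choosing $\varepsilon = \varepsilon(\la, g, P, p)$ small enough that $C\varepsilon \leq \tfrac12$, the first term on the right is absorbed into the left, producing (\ref{E4}).

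The main obstacle — and the reason $\la(A) \geq \la$ enters — is making the elliptic estimate for $\na_A^{\ast}\na_A$ on $\psi$ genuinely uniform in $A$, i.e. with constant $C$ independent of the connection. The constant in the naive elliptic inequality $\|\psi\|_{L^q_2} \leq C(\|\na_A^{\ast}\na_A\psi\|_{L^q} + \|\psi\|_{L^q})$ does depend on $A$ through the curvature appearing in commuting covariant derivatives; the standard device (as in \cite{Feehan}, and as in the proof of Lemma \ref{L1}) is to first establish the lower bound $\|\na_A \psi\|_{L^2}^2 \geq c\|\psi\|_{L^2}^2$, or more precisely to use that on $4$-manifolds with $\|F_A^+\|_{L^2}$ small one has a uniform bound on $\|F_A\|_{L^2}$ together with Uhlenbeck gauge fixing, so that the Sobolev and elliptic constants can be taken uniform over $\mathcal{B}_\varepsilon(P,g)$. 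Concretely I expect one must either (i) replace the abstract elliptic estimate by one written in local Uhlenbeck gauges where $\|A\|_{L^4}$ is controlled, paying attention that $q < 4$ so that the first-order term $A \ast \na \psi$ is lower-order in the scaling, or (ii) argue by contradiction and compactness exactly as in the proof of the Theorem preceding Definition \ref{D1}, using Proposition \ref{P2.6} to extract an Uhlenbeck limit and deducing uniformity of $C$ from continuity. I would adopt route (i) for a direct proof, and the delicate point is verifying that the commutator terms $[\na_A, \na_A^{\ast}]$ and the first-order coefficient $A$ contribute only an $\varepsilon$-small or $\la$-controlled perturbation, which is exactly where the hypothesis $\la(A) \geq \la$ is used to close the estimate rather than merely to control the kernel.
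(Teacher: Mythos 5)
Your proposal follows essentially the same route as the paper's proof: the Weitzenb\"{o}ck identity (\ref{E14}) for $F_{A}^{0,2}$, H\"{o}lder with $1/q=1/2+1/p$ to bound the commutator term by $\|\La_{\w}F_{A}\|_{L^{2}(X)}\|F_{A}^{0,2}\|_{L^{p}(X)}$, the $L^{q}$ a priori estimate for $\na_{A}^{\ast}\na_{A}$ combined with the Sobolev embedding $L^{q}_{2}\hookrightarrow L^{p}$, and absorption of the $\varepsilon$-small term by rearrangement. Two minor remarks: your worry about the embedding being borderline is unnecessary, since with $1/p=1/q-1/2$ and $p<\infty$ this is the standard Sobolev embedding in dimension four; and your discussion of making the elliptic constant uniform in $A$ (via Uhlenbeck gauges or compactness) is more careful than the paper, which simply invokes the a priori estimate (\ref{E7}).
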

\begin{proof}
We apply the estimate (\ref{E7}) and Sobolev embedding $L^{q}_{2}\hookrightarrow L^{p}$, then
$$\|F_{A}^{0,2}\|_{L^{p}(X)}\leq c\|F_{A}^{0,2}\|_{L^{q}_{2}(X)}\leq  \|\na_{A}^{\ast}\na_{A}F_{A}^{0,2}\|_{L^{q}(X)}+\|F^{0,2}_{A}\|_{L^{q}(X)}.$$
By using the Weizenb\"{o}ck formula of $F^{0,2}_{A}$, see equation (\ref{E14}), we then observe that
\begin{equation*}
\begin{split}
\|\na_{A}^{\ast}\na_{A}F_{A}^{0,2}\|_{L^{q}(X)}&\leq c\|F^{0,2}_{A}\|_{L^{q}(X)}+\|\{F_{A}^{0,2},\La_{\w}F_{A}\}\|_{L^{q}(X)}\\
&\leq c\|F^{0,2}_{A}\|_{L^{q}(X)}+c\|\La_{\w}F_{A}\|_{L^{2}(X)}\|F_{A}^{0,2}\|_{L^{p}(X)}.\\
\end{split}
\end{equation*}
where $c=c(g,G,p)$ is a positive constant.
Combining the preceding inequalities  gives
$$\|F_{A}^{0,2}\|_{L^{p}(X)}\leq c\|F^{0,2}_{A}\|_{L^{q}(X)}+c\|\La_{\w}F_{A}\|_{L^{2}(X)}\|F_{A}^{0,2}\|_{L^{p}(X)}+\|F^{0,2}_{A}\|_{L^{q}(X)}.$$
where $c=c(g,G,p)$ is a positive constant. Provide $c\|\La_{\w}F_{A}\|_{L^{2}(X)}\leq1/2$, rearrangement gives (\ref{E4}).
\end{proof} 
\section{Yang-Mills connection on K\"{a}hler surface}
\subsection{Approximate Hermitian-Yang-Mills connections}
In this section we will give a general criteria under which an approximate ASD connection $A\in\mathcal{A}_{P}$ could deform into an other approximate ASD connection $A_{\infty}$ which satisfies
\begin{equation} \label{E3.1}
\La_{\w}F_{A_{\infty}}=0.
\end{equation}
One also can see \cite[Section 4.2]{Huang2019}. Let $A$ be a connection on a principal $G$-bundle over $X$. The equation (\ref{E3.1}) for a second  connection $A+a$, where $a\in\Om^{1}(X,\mathfrak{g}_{P})$ is a bundle valued $1$-form, could be rewritten to
\begin{equation}\label{E1}
\La_{\w}(d_{A}a+a\wedge a)=-\La_{\w}F_{A}.
\end{equation}
We seek a solution of the equation (\ref{E1}) in the form 
$$a=d_{A}^{\ast}(s\otimes\w)=\sqrt{-1}(\pa_{A}s-\bar{\pa}_{A}s)$$
where $s\in\Om^{0}(X,\mathfrak{g}_{P})$ is a bundle value $0$-form. Then Equation (\ref{E1}) becomes the second order equation:
\begin{equation}\label{E2}
-d_{A}^{\ast}d_{A}s+\La_{\w}(d_{A}s\wedge d_{A}s)=-\La_{\w}F_{A}.
\end{equation}
For convenience, we define a map
$$B(u,v):=\frac{1}{2}\La_{\w}[d_{A}u\wedge d_{A}v].$$
It's easy to check, we have the pointwise bound:
$$|B(u,v)|\leq C|\na_{A}u||\na_{A}v|,$$
where $C$ is a uniform positive constant.

We would like to prove that if $\La_{\w}F_{A}$ is small in an appropriate sense, there is a small solution $s$ to equation (\ref{E2}).
\begin{theorem}\label{T6}(\cite[Theorem 4.8]{Huang2019})
Let $X$ be a compact, K\"{a}hler surface with a K\"{a}hler metric $g$, $P$ be a principal $G$-bundle over $X$ with $G$ being a compact, semisimple Lie group. There are positive constant $\la=\la(g,P)$ and $\varepsilon=\varepsilon(\la,g,P)$ with following significance. If the curvature $F_{A}$ of a connection $A$ on $P$ obeying
\begin{equation*}
\begin{split}
&\|\La_{\w}F_{A}\|_{L^{2}(X)}\leq\varepsilon,\\
&\la(A)\geq\la,\\
\end{split}
\end{equation*} 
where $\la(A)$ is as in (\ref{E3}), then there is a section $s\in\Ga(\mathfrak{g}_{P})$ such that the connection $$A_{\infty}:=A+\sqrt{-1}(\pa_{A}s-\bar{\pa}_{A}s)$$ satisfies\\
(1) $\La_{\w}F_{A_{\infty}}=0$\\
(2) $\|s\|_{L^{2}_{2}(X)}\leq C\|\La_{\w}F_{A}\|_{L^{2}(X)}$,\\
where $C=C(\la,g)\in[1,\infty)$ is a positive constant. Furthermore, let $p\in(2,\infty)$, 
$$\|F^{0,2}_{A_{\infty}}-F_{A}^{0,2}\|_{L^{2}(X)}\leq C(\|\La_{\w}F_{A}\|_{L^{2}(X)}+\|F_{A}^{0,2}\|_{L^{p}(X)})\|\La_{\w}F_{A}\|_{L^{2}(X)},$$
for a positive constant $C=C(\la,g,p)$.
\end{theorem}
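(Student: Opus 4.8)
The plan is to solve the semilinear elliptic equation (\ref{E2}) for $s$ by a contraction‑mapping argument, using the coercivity hypothesis $\la(A)\geq\la$ to invert the linearization, and then to deduce the last estimate from a Hodge‑type decomposition of the curvature of $A_{\infty}=A+\sqrt{-1}(\pa_{A}s-\bar{\pa}_{A}s)$. First, since $\la(A)\geq\la>0$, the operator $d_{A}^{\ast}d_{A}=\na_{A}^{\ast}\na_{A}$ on $\Gamma(\mathfrak{g}_{P})$ is injective; being self‑adjoint and elliptic of index zero over the closed manifold $X$, it is invertible, and by Lemma \ref{L1} its inverse $G_{A}$ is bounded $L^{2}\to L^{2}_{2}$ with operator norm $C=C(\la,g,P)$. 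Then equation (\ref{E2}) becomes the fixed‑point problem $s=T(s)$, where
$$T(s):=G_{A}\big(\La_{\w}F_{A}+\La_{\w}(d_{A}s\wedge d_{A}s)\big),$$
and I look for a fixed point of $T$ in a small closed ball of $L^{2}_{2}(\mathfrak{g}_{P})$.

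Two quantitative inputs drive the argument: $\|T(0)\|_{L^{2}_{2}}=\|G_{A}\La_{\w}F_{A}\|_{L^{2}_{2}}\leq C\|\La_{\w}F_{A}\|_{L^{2}}$; and, from the pointwise bound $|\La_{\w}(d_{A}u\wedge d_{A}v)|\leq C|\na_{A}u||\na_{A}v|$ (as for the bilinear form $B$ of the excerpt) combined with the Sobolev embedding $L^{2}_{1}\hookrightarrow L^{4}$ on the $4$‑manifold $X$ — applied via Kato's inequality to $|\na_{A}u|$, so the constant depends only on $g$ — the bilinear bound $\|\La_{\w}(d_{A}u\wedge d_{A}v)\|_{L^{2}}\leq C\|u\|_{L^{2}_{2}}\|v\|_{L^{2}_{2}}$. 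Polarizing the quadratic term gives $\|T(s)-T(s')\|_{L^{2}_{2}}\leq C(\|s\|_{L^{2}_{2}}+\|s'\|_{L^{2}_{2}})\|s-s'\|_{L^{2}_{2}}$ and $\|T(s)\|_{L^{2}_{2}}\leq C\|\La_{\w}F_{A}\|_{L^{2}}+C\|s\|_{L^{2}_{2}}^{2}$. Taking $\rho:=2C\|\La_{\w}F_{A}\|_{L^{2}}$ and $\varepsilon=\varepsilon(\la,g,P)$ small enough that $C\rho\leq\tfrac{1}{2}$, the map $T$ carries the ball $\{\|s\|_{L^{2}_{2}}\leq\rho\}$ into itself and is a contraction on it; the Banach fixed‑point theorem yields a solution $s$ of (\ref{E2}) with $\|s\|_{L^{2}_{2}}\leq\rho$, and reinserting this bound into $s=T(s)$ and absorbing the quadratic term sharpens it to $\|s\|_{L^{2}_{2}}\leq C\|\La_{\w}F_{A}\|_{L^{2}}$, which is (2). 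Since $A$ is smooth, elliptic bootstrapping applied to $d_{A}^{\ast}d_{A}s=\La_{\w}F_{A}+\La_{\w}(d_{A}s\wedge d_{A}s)$ (the right side gaining regularity at each step, with the smallness of $\|\na_{A}s\|_{L^{4}}$ needed only to initiate the iteration) shows $s\in C^{\infty}(\mathfrak{g}_{P})$; then $A_{\infty}$ is a smooth connection, and $\La_{\w}F_{A_{\infty}}=0$ because (\ref{E2}) is equivalent to (\ref{E3.1}) under the ansatz, which is (1).

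For the last inequality I would decompose by Hodge type. With $a:=\sqrt{-1}(\pa_{A}s-\bar{\pa}_{A}s)$, so $a^{0,1}=-\sqrt{-1}\,\bar{\pa}_{A}s$, and $F_{A_{\infty}}=F_{A}+d_{A}a+a\wedge a$, the $(0,2)$‑parts satisfy
$$F^{0,2}_{A_{\infty}}-F^{0,2}_{A}=\bar{\pa}_{A}a^{0,1}+a^{0,1}\wedge a^{0,1}=-\sqrt{-1}\,[F^{0,2}_{A},s]-\bar{\pa}_{A}s\wedge\bar{\pa}_{A}s,$$
using $\bar{\pa}_{A}^{2}s=[F^{0,2}_{A},s]$. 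For $p\in(2,\infty)$ let $p'$ be given by $1/2=1/p+1/p'$; then $\|[F^{0,2}_{A},s]\|_{L^{2}}\leq C\|F^{0,2}_{A}\|_{L^{p}}\|s\|_{L^{p'}}\leq C\|F^{0,2}_{A}\|_{L^{p}}\|s\|_{L^{2}_{2}}$, using $L^{2}_{2}\hookrightarrow L^{p'}$ (valid for every finite $p'$ on a $4$‑manifold, with a constant uniform in $A$ by the Kato--maximum‑principle bound $\|s\|_{L^{p'}}\leq C\|\na^{\ast}_{A}\na_{A}s\|_{L^{2}}$), while $\|\bar{\pa}_{A}s\wedge\bar{\pa}_{A}s\|_{L^{2}}\leq C\|\na_{A}s\|_{L^{4}}^{2}\leq C\|s\|_{L^{2}_{2}}^{2}$. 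Combining these with $\|s\|_{L^{2}_{2}}\leq C\|\La_{\w}F_{A}\|_{L^{2}}$ gives $\|F^{0,2}_{A_{\infty}}-F^{0,2}_{A}\|_{L^{2}}\leq C(\|F^{0,2}_{A}\|_{L^{p}}+\|\La_{\w}F_{A}\|_{L^{2}})\|\La_{\w}F_{A}\|_{L^{2}}$ with $C=C(\la,g,p)$, as claimed.

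The main obstacle is bookkeeping, not ideas: one must verify that the constants $C(\la,g,P)$ above are genuinely uniform in the particular connection $A$. This is precisely what the uniform lower bound $\la(A)\geq\la$ buys — through Lemma \ref{L1} for the inverse of $d_{A}^{\ast}d_{A}$, and, combined with Kato's inequality, for the Sobolev embeddings $L^{2}_{1,A}\hookrightarrow L^{4}$ and $L^{2}_{2,A}\hookrightarrow L^{p'}$ with $g$‑dependent constants — after which a single threshold $\varepsilon=\varepsilon(\la,g,P)$ can be chosen making $T$ at once a self‑map of and a contraction on the ball of radius $\rho\asymp\varepsilon$. The Kähler input — the decomposition of $d_{A}a$ and $a\wedge a$ into types and the identity $\bar{\pa}_{A}^{2}s=[F^{0,2}_{A},s]$ — is routine.
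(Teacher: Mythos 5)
Your proposal is correct and follows essentially the same route as the paper: the paper also solves (\ref{E2}) by the contraction mapping principle (implemented as the Picard iteration $f_{k}=S(f_{k-1},f_{k-1})+\La_{\w}F_{A}$ with geometrically decaying increments, which is your fixed-point argument written at the level of $f=d_{A}^{\ast}d_{A}s$ instead of $s$), using Lemma \ref{L1} for the uniform invertibility and the same Sobolev bilinear estimate. Your derivation of the final bound, via $F^{0,2}_{A_{\infty}}-F^{0,2}_{A}=-\sqrt{-1}[F^{0,2}_{A},s]-\bar{\pa}_{A}s\wedge\bar{\pa}_{A}s$ and the embeddings $L^{2}_{2}\hookrightarrow L^{r}$, $L^{2}_{1}\hookrightarrow L^{4}$, is exactly the paper's computation.
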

Now, we begin to prove Theorem \ref{T6}, the proof of above theorem is base on Taubes' ideas \cite{Taubes1982} and \cite{Donaldson1993}. At first, suppose $s$ and $f$ are sections of $\Ga(\mathfrak{g}_{P)}$ with
\begin{equation}\label{E8}
d^{\ast}_{A}d_{A}s=f,\ i.e.,\ \na_{A}^{\ast}\na_{A}s=f,
\end{equation}
the first observation is
\begin{lemma}\label{L3}(\cite[Lemma 4.9]{Huang2019})
If $\la(A)\geq\la>0$, then there exists a unique $C^{\infty}$ solution to equation (\ref{E8}). Furthermore, we have
\begin{equation*}
\begin{split}
&\|s\|_{L^{2}_{2}(X)}\leq c\|f\|_{L^{2}(X)},\\
&\|B(s,s)\|_{L^{2}(X)}\leq c\|f\|^{2}_{L^{2}(X)},\\
\end{split}
\end{equation*}
where $c=c(\la,g)$ is a positive constant.
\end{lemma}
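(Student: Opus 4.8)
The plan is to read (\ref{E8}), $\na_A^{\ast}\na_A s=f$, as an invertibility statement for the Laplace-type operator $d_A^{\ast}d_A=\na_A^{\ast}\na_A$ on $\Ga(\mathfrak{g}_P)$, and then to extract the two quantitative bounds from the spectral gap $\la(A)\geq\la$ together with the interior elliptic estimate (\ref{E7}) and the four-dimensional Sobolev embedding $L^2_1\hookrightarrow L^4$. First I would dispatch existence and uniqueness: $\na_A^{\ast}\na_A$ is a self-adjoint, non-negative, second-order elliptic operator, and the hypothesis $\la(A)\geq\la>0$ says exactly that $\inf_{v\neq0}\|\na_A v\|^2/\|v\|^2\geq\la$, so its kernel $\ker d_A|_{\Om^0(X,\mathfrak{g}_P)}$ is trivial. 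By self-adjointness it then has closed range and trivial cokernel, hence is a linear isomorphism $L^2_2(X,\mathfrak{g}_P)\to L^2(X,\mathfrak{g}_P)$, giving a unique $L^2_2$ solution $s$ for every $f\in L^2$; if $f$ is smooth, the standard elliptic bootstrap using (\ref{E7}) upgrades $s$ to $C^\infty$, and uniqueness in the smooth class is immediate.

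Next I would establish the two estimates on $s$. Pairing (\ref{E8}) with $s$ and integrating by parts gives $\|\na_A s\|^2_{L^2(X)}=\langle f,s\rangle_{L^2(X)}\leq\|f\|_{L^2(X)}\|s\|_{L^2(X)}$, while Definition \ref{D2} gives $\|\na_A s\|^2_{L^2(X)}\geq\la\|s\|^2_{L^2(X)}$; together these yield $\|s\|_{L^2(X)}\leq\la^{-1}\|f\|_{L^2(X)}$. For the second-derivative bound I would simply invoke Lemma \ref{L1} (equivalently, the case $k=0,\ p=2$ of (\ref{E7}) combined with the previous line), which gives $\|s\|_{L^2_2(X)}\leq c\|\na_A^{\ast}\na_A s\|_{L^2(X)}=c\|f\|_{L^2(X)}$ with $c=c(\la,g)$. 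Then, using the pointwise bound $|B(s,s)|\leq C|\na_A s|^2$ already recorded in the text, $\|B(s,s)\|_{L^2(X)}\leq C\|\na_A s\|^2_{L^4(X)}$, and the Sobolev embedding $L^2_1\hookrightarrow L^4$ in real dimension four gives $\|\na_A s\|_{L^4(X)}\leq c\|\na_A s\|_{L^2_1(X)}\leq c\|s\|_{L^2_2(X)}\leq c\|f\|_{L^2(X)}$, whence $\|B(s,s)\|_{L^2(X)}\leq c\|f\|^2_{L^2(X)}$.

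The only delicate point is the dependence of the constants on $A$: the raw elliptic constant in (\ref{E7}) depends on the geometry of $A$ through the zeroth-order curvature term, and one must check that all of this is absorbed into a bound depending only on $\la$ and $g$ (and the fixed bundle data). For the $L^2\to L^2_2$ estimate this is exactly the content of Lemma \ref{L1}, where the spectral gap $\la(A)\geq\la$ is precisely what controls the $\|s\|_{L^2}$ term and thereby the whole $L^2_2$ norm; the $L^4$ step introduces only the fixed Sobolev constant of $(X,g)$. So the estimates close with $c=c(\la,g)$ as claimed. I do not expect a genuine obstacle inside this lemma — it is the linear, low-order input feeding the contraction-mapping scheme behind Theorem \ref{T6} — the substantive work of making such bounds uniform over the relevant families of connections being carried out separately via the continuity and compactness arguments of the preceding sections.
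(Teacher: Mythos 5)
Your proposal is correct and follows essentially the same route as the paper: the $L^2_2$ bound is exactly Lemma \ref{L1} applied to $\na_A^{\ast}\na_A s=f$, and the bound on $B(s,s)$ comes from the pointwise estimate $|B(s,s)|\leq C|\na_A s|^2$ together with the Sobolev embedding $L^2_1\hookrightarrow L^4$, just as in the text. The only difference is that you spell out existence, uniqueness and smoothness via coercivity from the spectral gap plus elliptic bootstrap, which the paper leaves implicit by citing the corresponding lemma of the earlier reference; that addition is sound and does not change the substance.
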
 
\begin{proof}
Following the estimate on Lemma \ref{L1}, we have
$$\|s\|_{L^{2}_{2}(X)}\leq \|\na_{A}^{\ast}\na_{A}s\|_{L^{2}(X)}\leq c\|f\|_{L^{2}(X)},$$
for a positive constant $c=c(\la,g,P)$.	By the Sobolev inequality in four dimension,
\begin{equation*}
\|B(s,s)\|_{L^{2}(X)}\leq C\|\na_{A}s\|^{2}_{L^{4}(X)}\leq C\|\na_{A}s\|^{2}_{L^{2}_{1}(X)}\leq C\|s\|^{2}_{L^{2}_{2}(X)},
\end{equation*}
for a positive constant $C=C(\la,g,P)$.
\end{proof}
\begin{lemma}(\cite[Lemma 4.10]{Huang2019})
If $d_{A}^{\ast}d_{A}s_{1}=f_{1}$, $d_{A}^{\ast}d_{A}s_{2}=f_{2}$, then
\begin{equation*}
\|B(s_{1},s_{2})\|_{L^{2}(X)}\leq c\|f_{1}\|_{L^{2}(X)}\|f_{2}\|_{L^{2}(X)}.
\end{equation*}
\end{lemma}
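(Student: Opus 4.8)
The statement to be proved is a bilinear refinement of Lemma~\ref{L3}: given $d_A^\ast d_A s_1 = f_1$ and $d_A^\ast d_A s_2 = f_2$, one wants
$$\|B(s_1,s_2)\|_{L^2(X)} \leq c\,\|f_1\|_{L^2(X)}\|f_2\|_{L^2(X)}.$$
The plan is to imitate the single-variable argument in Lemma~\ref{L3}, replacing the square by a genuine product via the pointwise bound $|B(u,v)| \leq C|\na_A u||\na_A v|$ and Hölder's inequality, and then feeding in the elliptic estimate of Lemma~\ref{L1} for each factor separately.

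First I would write, using the pointwise bound for $B$ and the Cauchy--Schwarz inequality in $L^2(X)$,
$$\|B(s_1,s_2)\|_{L^2(X)} \leq C\big\| |\na_A s_1|\,|\na_A s_2| \big\|_{L^2(X)} \leq C\|\na_A s_1\|_{L^4(X)}\|\na_A s_2\|_{L^4(X)}.$$
Next, for each $i=1,2$, I would bound $\|\na_A s_i\|_{L^4(X)}$: since $X$ is a closed four-manifold, the Sobolev embedding $L^2_1(X)\hookrightarrow L^4(X)$ gives $\|\na_A s_i\|_{L^4(X)} \leq C\|\na_A s_i\|_{L^2_1(X)} \leq C\|s_i\|_{L^2_2(X)}$. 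Then Lemma~\ref{L1} (applicable because $\la(A)\geq\la$, which is part of the standing hypotheses carried over from Theorem~\ref{T6}) yields $\|s_i\|_{L^2_2(X)} \leq C\|\na_A^\ast\na_A s_i\|_{L^2(X)} = C\|f_i\|_{L^2(X)}$. Combining the two factors gives the claimed estimate with $c = c(\la,g,P)$.

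There is no substantial obstacle here; the only point requiring a little care is that the Sobolev and elliptic constants are uniform, which is exactly what Lemma~\ref{L1} (via the lower bound $\la(A)\geq\la$) and the compactness of $X$ guarantee. One may note that taking $s_1 = s_2 = s$ and $f_1 = f_2 = f$ recovers the second inequality of Lemma~\ref{L3}, so this lemma is strictly the polarized version; indeed one could alternatively deduce it from Lemma~\ref{L3} by a polarization identity for the symmetric bilinear form $B$, but the direct Hölder argument is cleaner and gives the constant immediately.
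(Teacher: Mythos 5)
Your argument is correct and is exactly the intended one: the paper states this lemma without proof, but it is the straightforward bilinear version of the proof it gives for Lemma \ref{L3} (pointwise bound on $B$, Cauchy--Schwarz into $L^{4}\times L^{4}$, Sobolev embedding $L^{2}_{1}\hookrightarrow L^{4}$, and the elliptic estimate of Lemma \ref{L1} under $\la(A)\geq\la$ applied to each factor), so your proposal matches the paper's approach.
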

We will prove the existence of a solution of (\ref{E2}) by the contraction mapping principle. We write $s=(d_{A}^{\ast}d_{A})^{-1}f$ and (\ref{E2}) becomes an equation for $f$ of the from 
\begin{equation}\label{E10}
f-S(f,f)=\La_{\w}F_{A},
\end{equation}
where  $S(f,g):=B((d_{A}^{\ast}d_{A})^{-1}f,(d_{A}^{\ast}d_{A})^{-1}g)$. Following Lemma \ref{L3},
\begin{equation*}
\begin{split}
\|S(f_{1},f_{1})-S(f_{2},f_{2})\|_{L^{2}(X)}&=\|S(f_{1}+f_{2},f_{1}-f_{2})\|_{L^{2}(X)}\\
&\leq c\|f_{1}+f_{2}\|_{L^{2}(X)}\|f_{1}-f_{2}\|_{L^{2}(X)}.\\
\end{split}
\end{equation*}
We denote $g_{k}=f_{k}-f_{k-1}$ and $g_{1}=f_{1}$, then
$$g_{1}=\La_{\w}F_{A},\  g_{2}=S(g_{1},g_{1})$$
and $$g_{k}=S(\sum_{i=1}^{k-1}g_{i},\sum_{i=1}^{k-1}g_{i})-S(\sum_{i=1}^{k-2}g_{i},\sum_{i=1}^{k-2}g_{i}),\ \forall\ k\geq 3.$$
It is easy to show that, under the assumption of $\La_{\w}F_{A}$, the sequence $f_{k}$ defined by
$$f_{k}=S(f_{k-1},f_{k-1})+\La_{\w}F_{A},$$
starting with $f_{1}=\La_{\w}F_{A}$, is Cauchy with respect to $L^{2}$, and so converges to a limit $f$ in the completion of $\Ga(\mathfrak{g}_{P})$ under $L^{2}$.
\begin{proposition}(\cite[Proposition 4.11]{Huang2019})
There are positive constants $\varepsilon\in(0,1)$ and $C\in(1,\infty)$ with following significance. If $$\|\La_{\w}F_{A}\|_{L^{2}(X)}\leq\varepsilon,$$
then each $g_{k}$ exists and is $C^{\infty}$. Further for each $k\geq1$, we have
\begin{equation}\label{E9}
\|g_{k}\|_{L^{2}(X)}\leq C^{k-1}\|\La_{\w}F_{A}\|^{k}_{L^{2}(X)}.
\end{equation}
\end{proposition}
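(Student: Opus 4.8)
The plan is to establish the estimate (\ref{E9}) by a Taubes-type iteration, carried out by induction on $k$, using only the bilinearity of $S$ together with the quadratic bound $\|S(f_{1},f_{2})\|_{L^{2}(X)}\le c\|f_{1}\|_{L^{2}(X)}\|f_{2}\|_{L^{2}(X)}$ already recorded above (with $c=c(\la,g,P)$), and elliptic regularity for the smoothness claim. Throughout I would write $\de:=\|\La_{\w}F_{A}\|_{L^{2}(X)}$ and $\Phi_{j}:=\sum_{i=1}^{j}g_{i}$.

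First I would dispose of the $C^{\infty}$ statement. Since $A$ is a smooth connection, $g_{1}=\La_{\w}F_{A}\in\Ga(\mathfrak{g}_{P})$ is smooth. Because $\la(A)\ge\la>0$, the operator $d_{A}^{\ast}d_{A}=\na_{A}^{\ast}\na_{A}$ is an invertible, second-order elliptic operator with $C^{\infty}$ coefficients (Lemma \ref{L3}), so $(d_{A}^{\ast}d_{A})^{-1}$ carries $C^{\infty}$ sections to $C^{\infty}$ sections by elliptic regularity; consequently $S$ maps pairs of smooth sections to smooth sections. As each $g_{k}$ is obtained from $g_{1},\dots,g_{k-1}$ by finitely many additions and applications of $S$, an induction on $k$ shows every $g_{k}$ is $C^{\infty}$.

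For the norm bound I would use the algebraic identity $g_{k}=S(\Phi_{k-1},\Phi_{k-1})-S(\Phi_{k-2},\Phi_{k-2})=S(\Phi_{k-1}+\Phi_{k-2},\,g_{k-1})$ for $k\ge3$ (valid since $\Phi_{k-1}-\Phi_{k-2}=g_{k-1}$ and $S$ is symmetric and bilinear, exactly as already used above), together with $g_{2}=S(g_{1},g_{1})$. The quadratic bound then gives $\|g_{2}\|_{L^{2}(X)}\le c\de^{2}$ and, for $k\ge3$,
$$\|g_{k}\|_{L^{2}(X)}\le c\,\|\Phi_{k-1}+\Phi_{k-2}\|_{L^{2}(X)}\,\|g_{k-1}\|_{L^{2}(X)}\le 2c\Big(\sum_{i=1}^{k-1}\|g_{i}\|_{L^{2}(X)}\Big)\|g_{k-1}\|_{L^{2}(X)}.$$
Now fix $C:=\max\{4c,2\}$ and impose $\varepsilon\le 1/(2C)$, so that $C\de\le 1/2$. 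Arguing by induction, with the cases $k=1,2$ immediate from $\|g_{1}\|_{L^{2}(X)}=\de$ and $\|g_{2}\|_{L^{2}(X)}\le c\de^{2}\le C\de^{2}$, suppose $\|g_{i}\|_{L^{2}(X)}\le C^{i-1}\de^{i}$ for all $i\le k-1$. Then $\sum_{i=1}^{k-1}\|g_{i}\|_{L^{2}(X)}\le\de\sum_{i=1}^{k-1}(C\de)^{i-1}\le\de/(1-C\de)\le 2\de$, whence
$$\|g_{k}\|_{L^{2}(X)}\le 2c\cdot 2\de\cdot C^{k-2}\de^{k-1}=4c\,C^{k-2}\de^{k}\le C^{k-1}\de^{k},$$
which is (\ref{E9}) and closes the induction.

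Since the whole argument is essentially bookkeeping, I do not expect a serious obstacle; the only point that requires care is the simultaneous choice of the constants $C$ and $\varepsilon$ — one must make the geometric series $\sum(C\de)^{i-1}$ summable, which forces $C\varepsilon\le 1/2$, while arranging that the single constant $C$ absorb the factor $2c\cdot 2$ produced by the telescoped recursion and also cover the base cases $k=1,2$. Everything else is supplied by Lemma \ref{L3} and the bilinear estimate preceding this Proposition.
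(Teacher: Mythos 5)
Your proposal is correct and follows essentially the same route as the paper: induction on $k$, the telescoping identity $S(\Phi_{k-1},\Phi_{k-1})-S(\Phi_{k-2},\Phi_{k-2})=S(\Phi_{k-1}+\Phi_{k-2},g_{k-1})$ together with the bilinear bound from Lemma \ref{L3}, the geometric-series estimate $\sum_{i=1}^{k-1}\|g_{i}\|_{L^{2}(X)}\leq\|\La_{\w}F_{A}\|_{L^{2}(X)}/(1-C\|\La_{\w}F_{A}\|_{L^{2}(X)})$, and a choice of $C$ and $\varepsilon$ ensuring the factor $2c/(1-C\|\La_{\w}F_{A}\|_{L^{2}(X)})\leq C$. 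Your version merely makes the constants explicit and adds the (routine) elliptic-regularity argument for smoothness, which the paper leaves implicit.
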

\begin{proof}
The proof is by induction on the integer $k$. The induction begins with $k=1$, one can see $g_{1}=\La_{\w}F_{A}$. The induction proof if completed by demonstrating that if (\ref{E9}) is satisfied for $j<k$, then it also satisfied for $j=k$. Indeed, since 
\begin{equation}\nonumber
\begin{split}
\|S(\sum_{i=1}^{k-1}g_{i},\sum_{i=1}^{k-1}g_{i})-S(\sum_{i=1}^{k-2}g_{i},\sum_{i=1}^{k-2}g_{i})\|_{L^{2}(X)}&\leq c\|\sum_{i=1}^{k-1}g_{i}+\sum_{i=1}^{k-2}g_{i}\|_{L^{2}(X)}\|g_{k-1}\|_{L^{2}(X)},\\
&\leq 2c\sum\|g_{i}\|_{L^{2}(X)}\|g_{k-1}\|_{L^{2}(X)},\\
&\leq \frac{2cC^{k-2}\|\La_{w}F_{A}\|^{k}_{L^{2}(X)}}{1-C\|\La_{\w}F_{A}\|_{L^{2}(X)}}.\\
\end{split}
\end{equation}
Now, we provide  $\varepsilon$ sufficiently small and $C$ sufficiently large to ensure $\|\La_{\w}F_{A}\|_{L^{2}(X)}\leq C^{-2}(C-2c)$, i.e., $\frac{2c}{1-C\|\La_{w}F_{A}\|_{L^{2}(X)}}\leq C$,  hence we complete the proof of this Proposition.
\end{proof}
\begin{proof}[\textbf{Proof of Theorem \ref{T6}}] The sequence $g_{k}$ is Cauchy in $L^{2}$, the limit $f:=\lim_{i\rightarrow}f_{k}$ is a solution to (\ref{E10}). Following  Lemma \ref{L3}, we have
\begin{equation*}
\|s\|_{L^{2}_{2}(X)}\leq c\|f\|_{L^{2}(X)}\leq c\sum_{k=1}^{\infty}\|g_{k}\|_{L^{2}(X)}\leq \frac{c\|\La_{\w}F_{A}\|_{L^{2}(X)}}{1-C\|\La_{\w}F_{A}\|_{L^{2}(X)}},
\end{equation*}
for a positive constant $c$. We provide $\varepsilon$ and $C$ to ensure $C\varepsilon\leq\frac{1}{2}$, hence 
$$\|s\|_{L^{2}_{2}(X)}\leq 2c\|\La_{\w}F_{A}\|_{L^{2}(X)}.$$
We denote $A_{\infty}:=A+\sqrt{-1}(\pa_{A}s-\bar{\pa}_{A}s)$, and $r\in(2,\infty)$ defined by $1/r=1/2-1/p$, then 
\begin{equation*}
\begin{split}
\|F^{0,2}_{A_{\infty}}-F_{A}^{0,2}\|_{L^{2}(X)}&=\|-\sqrt{-1}\bar{\pa}_{A}\bar{\pa}_{A}s-\bar{\pa}_{A}s\wedge\bar{\pa}_{A}s\|_{L^{2}(X)}\\
&=\|-\sqrt{-1}[F_{A}^{0,2},s]-\bar{\pa}_{A}s\wedge\bar{\pa}_{A}s\|_{L^{2}(X)}\\
&\leq 2\|\bar{\pa}_{A}s\|^{2}_{L^{4}(X)}+2\|F_{A}^{0,2}\|_{L^{p}(X)}\|s\|_{L^{r}(X)}\\
&\leq c\|\na_{A}s\|^{2}_{L^{2}_{1}(X)}+c\|F_{A}^{0,2}\|_{L^{p}(X)}\|s\|_{L^{2}_{2}(X)}\\
&\leq c(\|\La_{\w}F_{A}\|_{L^{2}(X)}+\|F_{A}^{0,2}\|_{L^{p}(X)})\|\La_{\w}F_{A}\|_{L^{2}(X)}.\\
\end{split}
\end{equation*}
where $c$ is a positive constant. We complete the proof of Theorem \ref{T6}.
\end{proof}
\begin{corollary}\label{C4}
Let $X$ be a compact, simply-connected, K\"{a}hler surface with a K\"{a}hler metric $g$, that is $generic$ in the sense of Definition \ref{D1}, $P$ be a $SO(3)$-bundle over $X$. Suppose that the second Stiefel-Whitney class, $\w_{2}(P)\in H^{2}(X;\mathbb{Z}/2\mathbb{Z})$, is non-trivial, then there is a positive constant $\varepsilon=\varepsilon(g,P)$ with following significance. If the curvature $F_{A}$ of a connection $A$ on $P$ obeying
$$\|F_{A}^{+}\|_{L^{2}(X)}\leq\varepsilon,$$
then  there is a section $s\in\Ga(\mathfrak{g}_{P})$ such that the connection $A_{\infty}:=A+\sqrt{-1}(\pa_{A}s-\bar{\pa}_{A}s)$ satisfies\\
(1) $\La_{\w}F_{A_{\infty}}=0$\\
(2) $\|s\|_{L^{2}_{2}(X)}\leq C\|\La_{\w}F_{A}\|_{L^{2}(X)}$,\\
where $C=C(g,P)$ is a positive constant. 
\end{corollary}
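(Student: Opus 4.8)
The plan is to deduce the statement by combining the uniform lower bound on $\la(A)$ from Theorem \ref{T4} with the deformation result of Theorem \ref{T6}; no genuinely new analysis is required. The first ingredient I would record is the elementary pointwise comparison on a K\"ahler surface: since $\Om^{2,+}(X,\mathfrak{g}_{P})=\Om^{2,0}\oplus\Om^{0,2}\oplus\mathbb{R}\w$ and, under this splitting, $F_{A}^{+}=F_{A}^{2,0}+F_{A}^{0,2}+\tfrac12(\La_{\w}F_{A})\w$, there is a constant $c_{1}=c_{1}(g)$ with $|\La_{\w}F_{A}|\leq c_{1}|F_{A}^{+}|$ at every point, hence $\|\La_{\w}F_{A}\|_{L^{2}(X)}\leq c_{1}\|F_{A}^{+}\|_{L^{2}(X)}$ for every connection $A$ on $P$. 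This is what lets one pass from an $L^{2}$-smallness hypothesis on $F_{A}^{+}$ to the hypothesis on $\La_{\w}F_{A}$ needed in Theorem \ref{T6}.

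Next I would apply Theorem \ref{T4}: because $g$ is $generic$ in the sense of Definition \ref{D1}, $P$ is an $SO(3)$-bundle and $\w_{2}(P)\neq 0$, there are positive constants $\la_{0}=\la_{0}(g,P)$ and $\varepsilon_{1}=\varepsilon_{1}(g,P)$ such that $\la(A)\geq\la_{0}/2$ for every connection $A$ on $P$ with $\|F_{A}^{+}\|_{L^{2}(X)}\leq\varepsilon_{1}$; recall that $\la(A)$ is gauge invariant, so this bound is well posed on $\mathcal{B}_{\varepsilon_{1}}(P,g)$. Since $SO(3)$ is compact and semisimple, I may then feed $\la:=\la_{0}/2$ into Theorem \ref{T6}, which yields a threshold $\varepsilon_{2}=\varepsilon_{2}(\la_{0}/2,g,P)=\varepsilon_{2}(g,P)\in(0,1]$ and a constant $C=C(\la_{0}/2,g)=C(g,P)$ with the property that whenever $\|\La_{\w}F_{A}\|_{L^{2}(X)}\leq\varepsilon_{2}$ and $\la(A)\geq\la_{0}/2$, there is a section $s\in\Ga(\mathfrak{g}_{P})$ for which $A_{\infty}:=A+\sqrt{-1}(\pa_{A}s-\bar{\pa}_{A}s)$ satisfies $\La_{\w}F_{A_{\infty}}=0$ and $\|s\|_{L^{2}_{2}(X)}\leq C\|\La_{\w}F_{A}\|_{L^{2}(X)}$, which are exactly conclusions (1) and (2).

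Finally I would set $\varepsilon:=\min\{\varepsilon_{1},\varepsilon_{2}/c_{1}\}$, a positive constant depending only on $(g,P)$. Given a connection $A$ on $P$ with $\|F_{A}^{+}\|_{L^{2}(X)}\leq\varepsilon$, the inequality $\varepsilon\leq\varepsilon_{1}$ and Theorem \ref{T4} give $\la(A)\geq\la_{0}/2$, while the pointwise comparison gives $\|\La_{\w}F_{A}\|_{L^{2}(X)}\leq c_{1}\varepsilon\leq\varepsilon_{2}$; both hypotheses of Theorem \ref{T6} now hold, so the desired $s$ and $A_{\infty}$ exist. The argument has no serious obstacle — all the real work is carried by Theorems \ref{T4} and \ref{T6} — but one point needs care, namely the order of quantifiers: $\la_{0}$, hence $\la_{0}/2$, must be fixed via Theorem \ref{T4} \emph{before} extracting the threshold $\varepsilon_{2}$ from Theorem \ref{T6}, since the latter's threshold and constant depend on the chosen value of $\la$.
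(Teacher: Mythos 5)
Your argument is correct and is exactly the deduction the paper intends (the paper states Corollary \ref{C4} without a separate proof, as an immediate consequence of Theorem \ref{T4} and Theorem \ref{T6}): the pointwise bound $|\La_{\w}F_{A}|\leq c_{1}|F_{A}^{+}|$ converts the hypothesis on $F_{A}^{+}$ into the hypothesis on $\La_{\w}F_{A}$, and the generic-metric/$\w_{2}\neq 0$ assumptions supply the uniform eigenvalue bound $\la(A)\geq\la_{0}/2$ needed to invoke Theorem \ref{T6}. Your remark on fixing $\la_{0}$ before extracting the threshold from Theorem \ref{T6} is the right point of care, and nothing further is needed.
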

\subsection{Yang-Mills connection with harmonic curvature}
Suppose that an integrable connecion $A\mathcal{A}_{P}^{1,1}$ on a holomorphic bundle over a K\"{a}hler surface is Yang-Mills, then $\La_{\w}F_{A}$ is parallel, that is equivalent to $F_{A}^{0,2}$ being harmonic with respect to Laplacian operator $\De_{\bar{\pa}_{A}}$. For a general case, we introduce the definition of a connection with harmonic curvature, See \cite[p. 96]{Itoh2}. 
\begin{definition}
A connection $A$ on a compact K\"{a}hler surface is said to be with a harmonic curvature if $(0,2)$-part of curvature is harmonic, i.e., $\bar{\pa}_{A}^{\ast}F_{A}^{0,2}=0$.
\end{definition}
\begin{lemma}\label{L5}
Let $X$ be a compact K\"{a}hler surface, $P$ be a $G$-bundle over $X$ with $G$ being a compact, semisimple Lie group, $A$ be a Yang-Mills connection on $P$. There are positive constant $\la=\la(g,P)$ and $\varepsilon=\varepsilon(\la,g,P)$ with following significance. If the curvature $F_{A}$ of the connection $A$ obeying
\begin{equation*}
\begin{split}
&\|F_{A}^{+}\|_{L^{2}(X)}\leq \varepsilon,\\
&\la(A)\geq\la,\\
\end{split}
\end{equation*} 
where $\la(A)$ is as in (\ref{E3}), then the curvature is harmonic and $\La_{\w}F_{A}=0$.
\end{lemma}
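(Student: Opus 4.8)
The plan is to reduce both conclusions to the single assertion $\La_{\w}F_{A}=0$, to extract from the Yang–Mills equation an a priori estimate that is \emph{quadratic} in the small quantity $F_{A}^{+}$, and then to upgrade it to exact vanishing using the deformed connection of Corollary \ref{C4}. For the reduction, observe that for a Yang–Mills connection on a K\"ahler surface the two conclusions are equivalent: by Proposition \ref{P1}(1) we have $2\bar{\pa}^{\ast}_{A}F^{0,2}_{A}=\sqrt{-1}\,\bar{\pa}_{A}\La_{\w}F_{A}$, so ``harmonic curvature'' $\bar{\pa}^{\ast}_{A}F^{0,2}_{A}=0$ is the same as $\bar{\pa}_{A}\La_{\w}F_{A}=0$; since $\La_{\w}F_{A}$ is a real section of $\mathfrak{g}_{P}$ this forces $\pa_{A}\La_{\w}F_{A}=\overline{\bar{\pa}_{A}\La_{\w}F_{A}}=0$, hence $d_{A}\La_{\w}F_{A}=0$, and then Definition \ref{D2} with $\la(A)\geq\la>0$ gives $\La_{\w}F_{A}=0$; conversely $\La_{\w}F_{A}=0$ implies $\bar{\pa}_{A}\La_{\w}F_{A}=0$ and hence $\bar{\pa}^{\ast}_{A}F^{0,2}_{A}=0$. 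So it suffices to prove $\La_{\w}F_{A}=0$.

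Next I would read off the hypothesis. On a K\"ahler surface $F^{+}_{A}=\tfrac12(\La_{\w}F_{A})\w+F^{2,0}_{A}+F^{0,2}_{A}$ with $F^{2,0}_{A}=\overline{F^{0,2}_{A}}$, so $\|F^{+}_{A}\|^{2}_{L^{2}(X)}$ is comparable to $\|\La_{\w}F_{A}\|^{2}_{L^{2}(X)}+\|F^{0,2}_{A}\|^{2}_{L^{2}(X)}$; in particular $\|\La_{\w}F_{A}\|_{L^{2}(X)}\leq C\varepsilon$ and $\|F^{0,2}_{A}\|_{L^{2}(X)}\leq C\varepsilon$. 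Fixing $\varepsilon=\varepsilon(\la,g,P)$ small enough that the hypotheses of Lemma \ref{L1}, Proposition \ref{P4} and Corollary \ref{C4} all hold with the present $\la$, Proposition \ref{P4} and H\"older give $\|F^{0,2}_{A}\|_{L^{p}(X)}\leq C\|F^{0,2}_{A}\|_{L^{q}(X)}\leq C\|F^{0,2}_{A}\|_{L^{2}(X)}\leq C\varepsilon$ for $p\in(4,\infty)$, hence by interpolation $\|F^{0,2}_{A}\|_{L^{4}(X)}\leq C\varepsilon$ (and, bootstrapping, $\|F^{0,2}_{A}\|_{L^{r}(X)}\leq C\varepsilon$ for every finite $r$).

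Now for the quadratic estimate. Both $\La_{\w}F_{A}$ and $F^{0,2}_{A}$ are components of $F^{+}_{A}$, and the identity established in the proof of Proposition \ref{P5}, namely $\|\bar{\pa}_{A}\La_{\w}F_{A}\|^{2}_{L^{2}(X)}=-2\langle\sqrt{-1}[\La_{\w}F_{A},F^{0,2}_{A}],F^{0,2}_{A}\rangle_{L^{2}(X)}$, therefore has a right-hand side which is quadratic in $F^{+}_{A}$. Using the reality of $\La_{\w}F_{A}$ (so $\|d_{A}\La_{\w}F_{A}\|^{2}_{L^{2}(X)}=2\|\bar{\pa}_{A}\La_{\w}F_{A}\|^{2}_{L^{2}(X)}$), the spectral bound $\|d_{A}\La_{\w}F_{A}\|^{2}_{L^{2}(X)}\geq\la(A)\|\La_{\w}F_{A}\|^{2}_{L^{2}(X)}\geq\la\|\La_{\w}F_{A}\|^{2}_{L^{2}(X)}$ from (\ref{E3}), H\"older, and the $L^{4}$ bound above, one gets
\begin{equation*}
\la\,\|\La_{\w}F_{A}\|^{2}_{L^{2}(X)}\leq 2\|\bar{\pa}_{A}\La_{\w}F_{A}\|^{2}_{L^{2}(X)}\leq C\,\|\La_{\w}F_{A}\|_{L^{2}(X)}\,\|F^{0,2}_{A}\|^{2}_{L^{4}(X)},
\end{equation*}
so that $\|\La_{\w}F_{A}\|_{L^{2}(X)}\leq \tfrac{C}{\la}\|F^{0,2}_{A}\|^{2}_{L^{4}(X)}\leq C\varepsilon^{2}/\la$. (Alternatively this estimate, with the curvature-of-$g$ terms dropping out, follows from the Weitzenb\"ock formula for the self-dual Laplacian restricted to the $\R\w$-summand of $\Om^{2,+}$, on which the Weyl and scalar-curvature contributions vanish, combined with $d^{\ast}_{A}F^{+}_{A}=0$.)

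This only shows $\La_{\w}F_{A}$ is of size $\varepsilon^{2}$, so to reach exactly $\La_{\w}F_{A}=0$ I bring in the deformed connection: by Corollary \ref{C4} there is $s\in\Ga(\mathfrak{g}_{P})$ with $A_{\infty}=A+\sqrt{-1}(\pa_{A}s-\bar{\pa}_{A}s)$, $\La_{\w}F_{A_{\infty}}=0$ and $\|s\|_{L^{2}_{2}(X)}\leq C\|\La_{\w}F_{A}\|_{L^{2}(X)}$, and by Theorem \ref{T6} $\|F^{0,2}_{A_{\infty}}-F^{0,2}_{A}\|_{L^{2}(X)}\leq C\varepsilon\,\|\La_{\w}F_{A}\|_{L^{2}(X)}$. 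The key point to establish is that $A_{\infty}$ is in fact anti-self-dual, i.e.\ $F^{0,2}_{A_{\infty}}=0$ (equivalently that $\|F^{0,2}_{A}\|_{L^{2}(X)}$ is controlled by $\|\La_{\w}F_{A}\|_{L^{2}(X)}$ rather than merely by $\varepsilon$): granting it, the Theorem \ref{T6} bound gives $\|F^{0,2}_{A}\|_{L^{2}(X)}\leq C\varepsilon\|\La_{\w}F_{A}\|_{L^{2}(X)}$, hence (bootstrap) $\|F^{0,2}_{A}\|_{L^{4}(X)}\leq C\varepsilon\|\La_{\w}F_{A}\|_{L^{2}(X)}$ and, applying Lemma \ref{L1}'s elliptic estimate to the equation $\na^{\ast}_{A}\na_{A}(\La_{\w}F_{A})=Q$ (whose right-hand side is now of size $\varepsilon\|\La_{\w}F_{A}\|_{L^{2}(X)}$), $\|\La_{\w}F_{A}\|_{L^{4}(X)}\leq C\|\La_{\w}F_{A}\|_{L^{2}(X)}$; thus $\|F^{+}_{A}\|^{2}_{L^{4}(X)}\leq C\|\La_{\w}F_{A}\|^{2}_{L^{2}(X)}$, and feeding this back into the quadratic estimate yields $\|\La_{\w}F_{A}\|_{L^{2}(X)}\leq \tfrac{C}{\la}\|\La_{\w}F_{A}\|^{2}_{L^{2}(X)}$, which together with $\|\La_{\w}F_{A}\|_{L^{2}(X)}\leq C\varepsilon<\la/C$ forces $\La_{\w}F_{A}=0$; harmonic curvature then follows from Proposition \ref{P1}(1) as in the first paragraph. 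I expect the main obstacle to be exactly this italicised claim; I would attack it by re-running the Taubes-type construction of Theorem \ref{T6} on $A_{\infty}$ itself (whose self-dual curvature is now just $F^{2,0}_{A_{\infty}}+F^{0,2}_{A_{\infty}}$, and which retains a positive spectral lower bound since $\la(\cdot)$ is continuous in the relevant topology, cf.\ Proposition \ref{P2.6}), or by an openness argument along the deformation path joining $A$ to $A_{\infty}$.
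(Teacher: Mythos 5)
There is a genuine gap, and it sits exactly where you flagged it. Your reduction and your first estimate are fine, but that estimate is only \emph{linear} in $\|\La_{\w}F_{A}\|_{L^{2}}$: from the identity in the proof of Proposition \ref{P5} you get $\la\|\La_{\w}F_{A}\|_{L^{2}(X)}^{2}\leq C\|\La_{\w}F_{A}\|_{L^{2}(X)}\|F^{0,2}_{A}\|^{2}_{L^{4}(X)}$, hence only $\|\La_{\w}F_{A}\|_{L^{2}}=O(\varepsilon^{2})$, and to close you are forced to assume the italicised claim that $F^{0,2}_{A_{\infty}}=0$, i.e.\ that $\|F^{0,2}_{A}\|$ is controlled by $\|\La_{\w}F_{A}\|$. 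That claim is not proved in your proposal and is not available under the hypotheses of Lemma \ref{L5}: the lemma assumes only a compact semisimple $G$ and the spectral gap $\la(A)\geq\la$ for $d_{A}^{\ast}d_{A}$ on $\Om^{0}$, whereas killing the $(0,2)$-part requires a gap for $\bar{\pa}_{A}\bar{\pa}_{A}^{\ast}$ on $(0,2)$-forms together with the vanishing theorem of Section 4 (which needs $G=SU(2)$ or $SO(3)$, simple connectivity and strong irreducibility of the ASD moduli), and the paper explicitly remarks that $A_{\infty}$ need \emph{not} be ASD. Re-running the Taubes-type construction on $A_{\infty}$ does not help: Theorem \ref{T6} is a contraction scheme for the scalar equation $\La_{\w}F=0$ driven by the $d_{A}^{\ast}d_{A}$ gap, and there is no analogous scheme for $F^{0,2}$ without precisely the $\mu(A)$-gap you do not have here; an ``openness along the path'' argument runs into the same missing ingredient.

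The way the paper closes the loop is to use $A_{\infty}$ differently: not to show it is ASD, but to improve your estimate from linear to quadratic in $\|\La_{\w}F_{A}\|$. Apply the Weitzenb\"ock formula (\ref{12}) with respect to $A_{\infty}$ to the form $F^{0,2}_{A}$: since $\La_{\w}F_{A_{\infty}}=0$ the commutator term is absent, and $\bar{\pa}_{A_{\infty}}F^{0,2}_{A}=0$ for degree reasons, so $\|\na_{A_{\infty}}F^{0,2}_{A}\|^{2}_{L^{2}}+\int_{X}2S|F^{0,2}_{A}|^{2}=\|\bar{\pa}^{\ast}_{A_{\infty}}F^{0,2}_{A}\|^{2}_{L^{2}}$. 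Comparing $\bar{\pa}^{\ast}_{A_{\infty}}$ with $\bar{\pa}^{\ast}_{A}$ and $\na_{A_{\infty}}$ with $\na_{A}$ costs terms of size $\|A-A_{\infty}\|_{L^{2}_{1}}\|F^{0,2}_{A}\|_{L^{4}}\leq C\|\La_{\w}F_{A}\|_{L^{2}}\|F^{0,2}_{A}\|_{L^{2}}$ (Theorem \ref{T6} and Proposition \ref{P4}), while $\|\bar{\pa}^{\ast}_{A}F^{0,2}_{A}\|^{2}=\tfrac14\|\bar{\pa}_{A}\La_{\w}F_{A}\|^{2}$ by Proposition \ref{P1}; combining with the identity (\ref{E12}) yields the key bound (\ref{E15}), $\|\bar{\pa}_{A}\La_{\w}F_{A}\|^{2}_{L^{2}}\leq c\|\La_{\w}F_{A}\|^{2}_{L^{2}}\|F^{0,2}_{A}\|^{2}_{L^{2}}$, which is quadratic in $\|\La_{\w}F_{A}\|$. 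Then $\la\|\La_{\w}F_{A}\|^{2}_{L^{2}}\leq\|\na_{A}\La_{\w}F_{A}\|^{2}_{L^{2}}=2\|\bar{\pa}_{A}\La_{\w}F_{A}\|^{2}_{L^{2}}$ and the smallness of $\|F^{0,2}_{A}\|_{L^{2}}$ force $\La_{\w}F_{A}\equiv0$ directly, with no need to compare $F^{0,2}_{A}$ against $\La_{\w}F_{A}$; harmonicity then follows from Proposition \ref{P1} as in your first paragraph. So the missing idea is this two-Weitzenb\"ock comparison, and without it (or the Section 4 machinery, which the lemma's hypotheses do not supply) your argument does not reach $\La_{\w}F_{A}=0$.
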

\begin{proof}
For a suitable constant $\varepsilon$, from Theorem \ref{T6}, there exist a connection $A_{\infty}$ such that
$$\|A-A_{\infty}\|_{L^{2}_{1}(X)}\leq c\|\La_{\w}F_{A}\|_{L^{2}(X)}$$
and $\La_{\w}F_{A_{\infty}}=0$. We apply the Weizenb\"{o}ck formula (\ref{E14}) to $F_{A}^{0,2}$, 
$$\|\bar{\pa}_{A_{\infty}}F_{A}^{0,2}\|^{2}_{L^{2}(X)}+\|\bar{\pa}_{A_{\infty}}^{\ast}F_{A}^{0,2}\|^{2}_{L^{2}(X)}=\|\na_{A_{\infty}}F_{A}^{0,2}\|^{2}_{L^{2}(X)}+\int_{X}2S|F_{A}^{0,2}|^{2}dvol_{g}.$$
We observe that $\bar{\pa}_{A_{\infty}}F_{A}^{0,2}=0$ and 
\begin{equation*}
\begin{split}
\|\bar{\pa}^{\ast}_{A_{\infty}}F_{A}^{0,2}\|^{2}_{L^{2}(X)}
&\leq c\|\{A-A_{\infty},F_{A}^{0,2}\}\|^{2}_{L^{2}(X)}+\|\bar{\pa}_{A}^{\ast}F_{A}^{0,2}\|^{2}_{L^{2}(X)}\\
&\leq c\|\{A-A_{\infty},F_{A}^{0,2}\}\|^{2}_{L^{2}(X)}+\frac{1}{4}\|\bar{\pa}_{A}\La_{\w}F_{A}\|^{2}_{L^{2}(X)},\\
&\leq c\|A-A_{\infty}\|^{2}_{L^{4}(X)}\|F_{A}^{0,2}\|^{2}_{L^{4}(X)}+\frac{1}{4}\|\bar{\pa}_{A}\La_{\w}F_{A}\|^{2}_{L^{2}(X)},\\
&\leq c\|A-A_{\infty}\|^{2}_{L^{2}_{1}(X)}\|F_{A}^{0,2}\|^{2}_{L^{2}(X)}+\frac{1}{4}\|\bar{\pa}_{A}\La_{\w}F_{A}\|^{2}_{L^{2}(X)},\\
&\leq c\|\La_{\w}F_{A}\|^{2}_{L^{2}(X)}\|F_{A}^{0,2}\|^{2}_{L^{2}(X)}+\frac{1}{4}\|\bar{\pa}_{A}\La_{\w}F_{A}\|^{2}_{L^{2}(X)}.\\
\end{split}
\end{equation*}
Here we use the estimates on Proposition \ref{P4} and Theorem \ref{T6} and Sobolev embedding $L^{2}_{1}\hookrightarrow L^{4}$. 
Combining the preceding inequalities with integrable identity (\ref{E12}) on Proposition \ref{P5}, gives
\begin{equation*}
\begin{split}
\frac{3}{4}\|\bar{\pa}_{A}\La_{\w}F_{A}\|^{2}_{L^{2}(X)}&= \|\na_{A}F_{A}^{0,2}\|^{2}_{L^{2}(X)}+\int_{X}2S|F_{A}^{0,2}|^{2}dvol_{g}\\
&\leq \|\na_{A_{\infty}}F^{0,2}_{A}\|^{2}_{L^{2}(X)}+\int_{X}2S|F_{A}^{0,2}|^{2}dvol_{g}+\|\{A-A_{\infty},F_{A}^{0,2}\}\|^{2}_{L^{2}(X)}\\
&\leq c\|\La_{\w}F_{A}\|^{2}_{L^{2}(X)}\|F_{A}^{0,2}\|^{2}_{L^{2}(X)}+\frac{1}{4}\|\bar{\pa}_{A}\La_{\w}F_{A}\|^{2}_{L^{2}(X)}.\\
\end{split}
\end{equation*}
for a positive constant $c=c(\la,g)$. Thus, we have
\begin{equation}\label{E15}
\|\bar{\pa}_{A}\La_{\w}F_{A}\|^{2}_{L^{2}(X)}\leq c\|\La_{\w}F_{A}\|^{2}_{L^{2}(X)}\|F_{A}^{0,2}\|^{2}_{L^{2}(X)}.
\end{equation}
We apply  Weitzenb\"{o}ck formula to  $\La_{\w}F_{A}$, See \cite[Lemma 6.1]{DK} .
$$\bar{\pa}^{\ast}_{A}\bar{\pa}_{A}\La_{\w}F_{A}=\frac{1}{2}\na_{A}^{\ast}\na_{A}\La_{\w}F_{A}+[\sqrt{-1}\La_{\w}F_{A},\La_{\w}F_{A}],$$
thus
$$\|\na_{A}\La_{\w}F_{A}\|^{2}_{L^{2}(X)}=2\|\bar{\pa}_{A}\La_{\w}F_{A}\|_{L^{2}(X)}^{2}.$$
Combining above identity with estimate (\ref{E15}) yields,
$$\|\La_{\w}F_{A}\|^{2}_{L^{2}(X)}\leq c\|\na_{A}\La_{\w}F_{A}\|^{2}_{L^{2}(X)}\leq c\|\La_{\w}F_{A}\|^{2}_{L^{2}(X)}\|F_{A}^{0,2}\|^{2}_{L^{2}(X)},$$
where $c=c(\la,g)$ is a positive constant. Provide $c\|F_{A}^{0,2}\|^{2}_{L^{2}(X)}\leq\frac{1}{2}$, thus $\La_{\w}F_{A}\equiv0$.
\end{proof}
Following the Lemma \ref{L5} and the eigenvalue $\la(A)$ has a uniform positive lower bounded under the hypothesis of K\"{a}hler metric $g$ is generic and the curvature $F_{A}$ of the connection $[A]$ obeys $\|F_{A}^{+}\|_{L^{2}(X)}\leq\varepsilon$ for a small enough constant $\varepsilon$, then we have 
\begin{corollary}\label{C2}
Let $X$ be a compact, simply-connected, K\"{a}hler surface with a K\"{a}hler metric $g$, that is $generic$ in the sense of Definition \ref{D1}, $P$ be a $SO(3)$-bundle over $X$. Suppose that the second Stiefel-Whitney class, $\w_{2}(P)\in H^{2}(X;\mathbb{Z}/2\mathbb{Z})$, is non-trivial, then there is a positive constant $\varepsilon=\varepsilon(g,P)$ with following significance. If the curvature $F_{A}$ of a Yang-Mills connection $A$ on $P$ obeying
$$\|F_{A}^{+}\|_{L^{2}(X)}\leq\varepsilon,$$
then the curvature is harmonic and $\La_{\w}F_{A}=0$.
\end{corollary}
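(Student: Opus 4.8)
The plan is to derive Corollary~\ref{C2} by feeding the uniform spectral lower bound of Theorem~\ref{T4} into Lemma~\ref{L5}; once the mutual dependence of the various thresholds is made explicit, the statement follows by taking $\varepsilon$ to be the smaller of two constants already produced. First I would invoke Theorem~\ref{T4}: under the present hypotheses ($X$ compact, simply-connected, K\"ahler with a \emph{generic} metric in the sense of Definition~\ref{D1}, $P$ an $SO(3)$-bundle, and $w_2(P)\neq 0$) it provides positive constants $\la_0=\la_0(g,P)$ and $\varepsilon_1=\varepsilon_1(g,P)$ such that $\la(A)\geq\la_0/2$ for every $[A]\in\mathcal{B}_{\varepsilon_1}(P,g)$. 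Internally this rests on the compactness of the Donaldson--Uhlenbeck compactification $\bar{M}(P,g)$, the continuity of $[A]\mapsto\la(A)$ in the Uhlenbeck topology, and the Lemma preceding Theorem~\ref{T4} showing $\la(A)>0$ on $\bar{M}(P,g)$, which uses that the only reducible ASD connection on $P$ would be the product connection, excluded here by $w_2(P)\neq0$.

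Next I would apply Lemma~\ref{L5} with the threshold constant $\la$ there taken to be $\la_0/2$. The point that must be checked is that the constant $\la$ in the hypotheses of Lemma~\ref{L5} --- and, upstream, in Lemma~\ref{L1}, Proposition~\ref{P4} and Theorem~\ref{T6} --- is not rigidly fixed by $(g,P)$: each of these statements is valid for \emph{any} prescribed positive lower bound on $\la(A)$, with the companion constant $\varepsilon$ and all the constants $C$ then depending on that bound. Taking the bound to be $\la_0/2$, Lemma~\ref{L5} yields $\varepsilon_2=\varepsilon_2(\la_0,g,P)>0$ such that any Yang-Mills connection $A$ on $P$ with $\|F_A^+\|_{L^2(X)}\leq\varepsilon_2$ and $\la(A)\geq\la_0/2$ has harmonic curvature and satisfies $\La_\w F_A=0$.

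Finally, set $\varepsilon:=\min\{\varepsilon_1,\varepsilon_2\}$, a constant depending only on $g$ and $P$. If $A$ is a Yang-Mills connection on $P$ with $\|F_A^+\|_{L^2(X)}\leq\varepsilon$, then $\varepsilon\leq\varepsilon_1$ forces $[A]\in\mathcal{B}_{\varepsilon_1}(P,g)$, whence $\la(A)\geq\la_0/2$ by the first step; and since also $\varepsilon\leq\varepsilon_2$, the second step applies and gives that $F_A$ is harmonic with $\La_\w F_A=0$. The only genuinely delicate point in this scheme is the matching of thresholds in the middle step: one must be sure that the input hypothesis $\la(A)\geq\la$ of Lemma~\ref{L5} can be met with $\la=\la_0/2$ and not merely with some a priori unrelated constant --- and this is legitimate precisely because $\varepsilon$ in Lemma~\ref{L5} is allowed to depend on $\la$. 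Everything else is bookkeeping: Theorem~\ref{T4} supplies exactly the spectral bound that Lemma~\ref{L5} consumes as data, and the smallness of $\|F_A^+\|_{L^2(X)}$ places $A$ simultaneously in the domain of validity of both results.
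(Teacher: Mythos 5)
Your proposal is correct and follows essentially the same route as the paper: the paper obtains Corollary \ref{C2} precisely by combining the uniform eigenvalue bound $\la(A)\geq\la_{0}/2$ on $\mathcal{B}_{\varepsilon}(P,g)$ from Theorem \ref{T4} (available here since the generic metric and $w_{2}(P)\neq0$ rule out reducibles) with Lemma \ref{L5}, shrinking $\varepsilon$ so both hypotheses hold simultaneously. Your explicit bookkeeping of the two thresholds $\varepsilon_{1},\varepsilon_{2}$ and of the dependence of the constant in Lemma \ref{L5} on the chosen lower bound $\la=\la_{0}/2$ is exactly the implicit content of the paper's argument.
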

\subsection{A vanishing theorem}
Let $(X,\w)$ be a compact K\"{a}hler surface. Given an orthonormal coframe $\{e_{0},e_{1},e_{2},e_{3}\}$ on $X$ for which $\w=e^{01}+e^{23}$, where $e^{ij}=e^{i}\wedge e^{j}$. We define $dz^{1}=e^{0}+\rm{i}e^{1}$, $dz^{2}=e^{2}+\rm{i}e^{3}$ and $d\bar{z}^{1}=e^{0}-\rm{i}e^{1}$, $d\bar{z}^{2}=e^{2}-\rm{i}e^{3}$, so that $\w=\frac{\rm{i}}{2}(dz^{1}\wedge d\bar{z}^{1}+dz^{2}\wedge d\bar{z}^{2})$. 
\begin{proposition}
Let $A$ be a connection on a principal $SU(2)$ or $SO(3)$ bundle over a compact K\"{a}hler surface. If the curvature $F_{A}$ of connection $A$ is harmonic, then $F_{A}^{0,2}$ has at most rank one.
\end{proposition}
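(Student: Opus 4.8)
The plan is to turn the hypothesis into a first–order statement about $F_{A}^{0,2}$, to rephrase ``rank at most one'' as the vanishing of a single quadratic invariant, and then to kill that invariant. First, unwind ``harmonic''. On a K\"ahler surface $\La_{\w}F_{A}^{0,2}=0$, so the K\"ahler identity $\bar{\pa}_{A}^{\ast}=-\sqrt{-1}[\La_{\w},\pa_{A}]$ gives $\bar{\pa}_{A}^{\ast}F_{A}^{0,2}=-\sqrt{-1}\La_{\w}\pa_{A}F_{A}^{0,2}$; since on a surface $\La_{\w}\colon\Om^{1,2}(X,\mathfrak{g}_{P}^{\C})\to\Om^{0,1}(X,\mathfrak{g}_{P}^{\C})$ is an isomorphism, the harmonicity $\bar{\pa}_{A}^{\ast}F_{A}^{0,2}=0$ is equivalent to $\pa_{A}F_{A}^{0,2}=0$, i.e.\ to $\na_{A}^{1,0}F_{A}^{0,2}=0$ (and, by conjugation, $\bar{\pa}_{A}F_{A}^{2,0}=0$). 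Next, for $G$ equal to $SU(2)$ or $SO(3)$ the bundle $\mathfrak{g}_{P}^{\C}$ is fibrewise $\mathfrak{sl}(2,\C)$ (using $\mathfrak{so}(3,\C)\cong\mathfrak{sl}(2,\C)$ in the second case), and an element $\xi\in\mathfrak{sl}(2,\C)$ has rank $\leq 1$ on the standard $2$–dimensional module exactly when it is nilpotent, exactly when its Killing norm $\langle\xi,\xi\rangle$ vanishes. Hence, writing $F_{A}^{0,2}$ locally as $\phi\otimes d\bar z^{1}\wedge d\bar z^{2}$, the statement ``$F_{A}^{0,2}$ has rank $\leq 1$ everywhere'' is equivalent to the vanishing of the section $q:=\langle F_{A}^{0,2},F_{A}^{0,2}\rangle$ of $(\Om^{0,2})^{\otimes 2}$ obtained by contracting the two Lie–algebra factors with the Killing form.

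Now I would record two facts about $q$. Because the Killing form is parallel, $\na_{A}^{1,0}F_{A}^{0,2}=0$ forces $\na^{1,0}q=0$; equivalently $\overline{q}$ is a holomorphic section of $K_{X}^{\otimes 2}$, so $\{q=0\}$ is either all of $X$ or an effective divisor and $q$ has no further zeros. On the other hand, contracting the Weitzenb\"ock identity of Proposition \ref{P3} (with $\De_{\bar{\pa}_{A}}F_{A}^{0,2}=0$), or the Yang–Mills identity \eqref{E14}, against $F_{A}^{0,2}$ with the Killing form annihilates the commutator term, since $\langle[\sqrt{-1}\La_{\w}F_{A},F_{A}^{0,2}],F_{A}^{0,2}\rangle=0$ by $\mathrm{ad}$–invariance of $\langle\cdot,\cdot\rangle$; using $\na_{A}F_{A}^{0,2}=\na_{A}^{0,1}F_{A}^{0,2}$, what is left is an identity of the form $\tfrac12\na^{\ast}\na q+\langle\na_{A}^{0,1}F_{A}^{0,2},\na_{A}^{0,1}F_{A}^{0,2}\rangle+2Sq=0$ for the line–bundle section $q$.

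To finish I would show $q\equiv 0$. On the open set $U=\{q\neq 0\}$ the endomorphism part of $F_{A}^{0,2}$ is fibrewise regular semisimple, so $E|_{U}$ splits smoothly as $L_{+}\oplus L_{-}$ into its eigen–line bundles; since $\na_{A}^{1,0}F_{A}^{0,2}=0$ implies $\na_{A}^{1,0}$ preserves $L_{\pm}$, and comparing this with the conjugate, $\na_{A}^{0,1}$–invariant splitting coming from $F_{A}^{2,0}=\overline{F_{A}^{0,2}}$, together with the identity above and compactness of $X$, should force $U=\varnothing$. The main obstacle is exactly this last step: $\na_{A}^{1,0}F_{A}^{0,2}=0$ by itself makes $\overline{q}$ only a holomorphic bicanonical section, which need not vanish on a general surface, so the argument has to use harmonicity — not merely $\pa_{A}F_{A}^{0,2}=0$ — and the $\mathfrak{sl}(2,\C)$ structure in an essential way (via the eigen–line decomposition, or via the observation that the two splittings coinciding would make $A$ reducible) in order to exclude $F_{A}^{0,2}$ having rank $2$ on any nonempty open set.
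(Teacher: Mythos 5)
Your reading of the conclusion is not the one the paper intends, and this changes the problem. In this paper ``$F_{A}^{0,2}$ has at most rank one'' means that, writing $F_{A}^{0,2}=(B_{1}+{\rm i}B_{2})\,d\bar{z}^{1}\wedge d\bar{z}^{2}$ with $B_{1},B_{2}$ valued in $\mathfrak{su}(2)$ or $\mathfrak{so}(3)$, the components satisfy $[B_{1},B_{2}]=0$ pointwise, so they are pointwise proportional and $F_{A}^{0,2}$ takes values in a subbundle of $\mathfrak{g}_{P}$ of rank at most one; this is exactly the condition $[\phi,\ast\phi]=0$ that defines $\tilde{\Om}^{0,2}(X,\mathfrak{g}_{P}^{\C})$ in (\ref{E3.10}) and is what the later vanishing argument needs. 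You instead interpret ``rank one'' as rank of $F_{A}^{0,2}$ acting on the fundamental representation, i.e.\ nilpotency of the $\mathfrak{sl}(2,\C)$-valued coefficient, and aim to prove the vanishing of the Killing-form invariant $q=\langle F_{A}^{0,2},F_{A}^{0,2}\rangle$. That is a genuinely different and stronger statement: in terms of $B_{1},B_{2}$ it says $|B_{1}|=|B_{2}|$ and $B_{1}\perp B_{2}$, whereas the proposition asserts $B_{1}\parallel B_{2}$; if both held one would get $F_{A}^{0,2}\equiv0$, which is the content of the main theorem only under the additional Yang--Mills, small-energy and irreducibility hypotheses, so it cannot be expected from harmonicity alone. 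Moreover, even for your reformulated goal the argument is not closed: as you yourself note, $\pa_{A}F_{A}^{0,2}=0$ only makes $\bar{q}$ a holomorphic bicanonical section, which need not vanish, and the final eigen-line/contradiction step is left as something that ``should force'' the conclusion.

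The mechanism you are missing is much shorter and purely pointwise: apply $\bar{\pa}_{A}^{\ast}$ a second time. On bundle-valued forms $\bar{\pa}_{A}^{\ast}\bar{\pa}_{A}^{\ast}$ is not zero but a zeroth-order curvature operator, and on a surface
$$0=\bar{\pa}_{A}^{\ast}\bar{\pa}_{A}^{\ast}F_{A}^{0,2}=-\ast[F_{A}^{0,2}\wedge\ast F_{A}^{0,2}],$$
so harmonicity ($\bar{\pa}_{A}^{\ast}F_{A}^{0,2}=0$) immediately gives $[F_{A}^{0,2}\wedge\ast F_{A}^{0,2}]=0$, i.e.\ $[B_{1},B_{2}]=0$ as in (\ref{E3.8}). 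Since in $\mathfrak{su}(2)\cong\mathfrak{so}(3)$ commuting elements are linearly dependent, $F_{A}^{0,2}$ takes values in a line subbundle, which is the assertion. Your first step (harmonicity is equivalent to $\pa_{A}F_{A}^{0,2}=0$ via the K\"ahler identities) is correct but is not used in the paper and does not by itself lead to the pointwise bracket identity that the rest of the paper relies on.
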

\begin{proof}
Since $\bar{\pa}_{A}^{\ast}F_{A}^{0,2}=0$, we have
\begin{equation}\label{E3.7}
0=\bar{\pa}^{\ast}_{A}\bar{\pa}^{\ast}_{A}F_{A}^{0,2}=-\ast[F_{A}^{0,2}\wedge\ast F_{A}^{0,2}].
\end{equation} 
In an orthonormal coframe, we can written $F_{A}^{0,2}$ as 
$$F_{A}^{0,2}=(B_{1}+{\rm{i}}B_{2})d\bar{z}^{1}\wedge d\bar{z}^{2},$$
where $B_{1},B_{2}$ take value in Lie algebra $\mathfrak{su}(2)$ or $\mathfrak{so}(3)$. Thus 
$$\ast F_{A}^{0,2}=(-B_{1}+{\rm{i}}B_{2})dz^{1}\wedge dz^{2}.$$
Following Equation (\ref{E3.7}), we obtain that
\begin{equation}\label{E3.8}
0=[B_{1},B_{2}].
\end{equation}
Thus $F_{A}^{0,2}$ has most rank one. For the details of the calculation, one also can see \cite{Mares} Chapter 4. 
\end{proof}
We define $\b$ as follows, if
$$B=B_{1}(e^{01}+e^{23})+B_{2}(e^{02}+e^{31})+B_{3}(e^{03}+e^{12}),$$
then
$$\b:=\frac{1}{2}(B_{2}-{\rm{i}}B_{3})dz^{1}\wedge dz^{2},\ \b^{\ast}:=-\frac{1}{2}(B_{2}+{\rm{i}}B_{3})d\bar{z}^{1}\wedge d\bar{z}^{2}.$$
It follows that $B:=B_{1}\w+\b-\b^{\ast}$. We define a bilinear map $$[\bullet.\bullet]:\Om^{2,+}(X,\mathfrak{g}_{P})\otimes\Om^{2,+}(X,\mathfrak{g}_{P})\rightarrow\Om^{2,+}(X,\mathfrak{g}_{P})$$
by $\frac{1}{2}[\cdot,\cdot]_{\Om^{2,+}}\otimes[\cdot,\cdot]_{\mathfrak{g}_{P}}$, see \cite{Mares} Section B.4.  In a direct calculate, see \cite{Mares} Section 7.1, 
$$-\frac{1}{4}[B.B]=[B_{2},B_{3}](e^{01}+e^{23})+[B_{3},B_{1}](e^{02}+e^{31})+[B_{1},B_{2}](e^{03}+e^{12}).$$
\begin{proposition}
Let $G$ be a compact, semisimple  Lie group, $A$ be a connection on a principal $G$-bundle over a compact K\"{a}hler surface. If the curvature of the connection $A$ satisfies $\La_{\w}F_{A}=0$ and $\bar{\pa}_{A}^{\ast}F_{A}^{0,2}=0$, then $[F_{A}^{+}. F_{A}^{+}]=0$.	
\end{proposition}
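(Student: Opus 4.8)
The plan is to read off the three self-dual components of $F_A^+$, observe that the first hypothesis annihilates the $\w$-component while the second annihilates the one remaining commutator, and then substitute into the formula for $[\,\cdot\,.\,\cdot\,]$ displayed just above the statement.

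First I would use the decomposition of self-dual $\mathfrak g_P$-valued $2$-forms on a K\"ahler surface, $\Om^{2,+}(X,\mathfrak g_P)=\mathbb R\w\oplus\Om^{2,0}(X,\mathfrak g_P)\oplus\Om^{0,2}(X,\mathfrak g_P)$, and write $F_A^+$ in the real orthonormal self-dual frame used above as
$$F_A^+=B_1(e^{01}+e^{23})+B_2(e^{02}+e^{31})+B_3(e^{03}+e^{12}),\qquad B_1,B_2,B_3\in\Om^0(X,\mathfrak g_P).$$
Comparing with $F_A=F_A^{2,0}+F_{A0}^{1,1}+\tfrac12\La_\w F_A\otimes\w+F_A^{0,2}$ identifies $B_1=\tfrac12\La_\w F_A$, and in the notation fixing $\b,\b^{\ast}$ one has $F_A^{2,0}=\b$ and $F_A^{0,2}=-\b^{\ast}=\tfrac12(B_2+\mathrm iB_3)\,d\bar z^1\wedge d\bar z^2$. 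The hypothesis $\La_\w F_A=0$ is therefore exactly $B_1=0$, so in particular $[B_3,B_1]=[B_1,B_2]=0$.

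Next I would extract $[B_2,B_3]=0$ from $\bar\pa_A^{\ast}F_A^{0,2}=0$, via the same computation used to prove that $F_A^{0,2}$ has rank at most one: by (\ref{E3.7}), $\bar\pa_A^{\ast}F_A^{0,2}=0$ forces $\ast[F_A^{0,2}\wedge\ast F_A^{0,2}]=0$, and substituting $F_A^{0,2}=\tfrac12(B_2+\mathrm iB_3)\,d\bar z^1\wedge d\bar z^2$, hence $\ast F_A^{0,2}=\tfrac12(-B_2+\mathrm iB_3)\,dz^1\wedge dz^2$, turns $[F_A^{0,2}\wedge\ast F_A^{0,2}]$ into a nonzero multiple of $[B_2,B_3]\,dvol_g$; thus $[B_2,B_3]=0$. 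Note that only the bracket on $\mathfrak g$ is used here, so this step goes through for any compact semisimple $G$; the rank-one conclusion, which required $G=SU(2)$ or $SO(3)$, plays no role.

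Finally I would substitute into the formula recorded just above the statement,
$$-\tfrac14[F_A^+.F_A^+]=[B_2,B_3](e^{01}+e^{23})+[B_3,B_1](e^{02}+e^{31})+[B_1,B_2](e^{03}+e^{12}),$$
whose three terms all vanish by the relations just established, so $[F_A^+.F_A^+]=0$. The only delicate point is the bookkeeping matching $B_1$ with $\La_\w F_A$ and $B_2,B_3$ with the real and imaginary parts of $F_A^{0,2}$, together with tracking signs carefully enough that the identity (\ref{E3.7}) genuinely outputs $[B_2,B_3]=0$ rather than some harmless nonzero multiple of it; everything else is immediate.
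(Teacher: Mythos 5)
Your proposal is correct and follows essentially the same route as the paper: write $F_A^+=F_A^{2,0}+F_A^{0,2}+\tfrac12\La_\w F_A\otimes\w$, note that $\La_\w F_A=0$ kills the $\w$-component, extract the remaining commutator relation from $0=\bar\pa_A^{\ast}\bar\pa_A^{\ast}F_A^{0,2}=-\ast[F_A^{0,2}\wedge\ast F_A^{0,2}]$ (the paper's Equation (\ref{E3.8})), and substitute into the displayed formula for $[\,\cdot\,.\,\cdot\,]$. Your added remark that this commutator identity uses only the Lie bracket and hence holds for any compact semisimple $G$ (so the $SU(2)/SO(3)$ restriction of the rank-one proposition is irrelevant here) is a correct and worthwhile clarification of what the paper leaves implicit.
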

\begin{proof}
We can written $F_{A}^{+}$ as $F_{A}^{+}:=F_{A}^{0,2}+F_{A}^{2,0}+\frac{1}{2}\La_{\w}F_{A}\otimes\w$. By the hypothesis of curvature and Equation (\ref{E3.8}), we then have $[F_{A}^{+}. F_{A}^{+}]=0$.
\end{proof}
Before the prove of vanishing theorem \ref{T2}, we should recall a useful lemma proved by Donaldson \cite{DK} Lemma 4.3.21.
\begin{lemma}\label{L4}
If $A$ is an irreducible $SU(2)$ or $SO(3)$ ASD connection on a bundle $P$ over a simply connected four-manifold $X$, then the restriction of $A$ to any non-empty open set in $X$ is also irreducible.
\end{lemma}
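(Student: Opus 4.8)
The plan is to translate irreducibility into a statement about parallel sections of the adjoint bundle, and then to show that such a section, defined on a nonempty open set, propagates to all of $X$; the two inputs are the unique-continuation theory of anti-self-dual connections (see \cite[Section~4.3]{DK}) and the simple connectivity of $X$. For $G=SU(2)$ or $SO(3)$ the adjoint bundle $\mathfrak{g}_{P}$ has real rank three, and over a connected base a connection on $P$ is reducible exactly when its holonomy fixes a nonzero vector of $\mathfrak{g}_{P}$, i.e. exactly when $\ker\big(d_{A}\colon\Om^{0}(X,\mathfrak{g}_{P})\to\Om^{1}(X,\mathfrak{g}_{P})\big)\neq 0$; a nonzero parallel section $\xi$ then gives the parallel splitting $\mathfrak{g}_{P}=\underline{\mathbb{R}}\,\xi\oplus\xi^{\perp}$ and reduces the structure group to $U(1)$, resp. $SO(2)$. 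Hence Lemma~\ref{L4} is equivalent to the assertion: if $A$ admits a nonzero parallel section of $\mathfrak{g}_{P}$ over some nonempty open $U\subset X$, then it admits one over all of $X$. The reverse implication is trivial by restriction, so this is exactly what must be shown.

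So suppose $\na_{A}\xi=0$ on $U$ with $\xi\not\equiv 0$; passing to a connected component we may take $U$ connected, whereupon $|\xi|$ is a nonzero constant, which we normalise to $1$, and differentiating gives the pointwise identity $[F_{A},\xi]=0$ on $U$. Since a parallel section of $\mathfrak{g}_{P}$ over a connected open set is determined by its value at one point, it makes sense to speak of the space of parallel sections over such a set. The key step is to show, using the regularity/unique-continuation theory of anti-self-dual connections, that the sheaf $\mathcal{P}$ of germs of parallel sections of $\mathfrak{g}_{P}$ over $X$ is locally constant: the dimension of the space of parallel sections over a small ball equals that of the stalk at its centre, and this common dimension is locally constant in the base point. (In a local Coulomb gauge $A$ is as regular as the metric allows — real-analytic when the metric is, in general smooth — and a section parallel to infinite order at a point is then parallel nearby, which forces local constancy of the stalk dimension; making this precise for a general metric, via Aronszajn-type unique continuation for instantons, is the one delicate point.)

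Granting that $\mathcal{P}$ is locally constant, the rest is formal. A locally constant sheaf over the simply connected $X$ is a constant sheaf; its stalk at $p_{0}\in U$ contains the germ of $\xi$, hence is nonzero; therefore $\mathcal{P}$ has a nonzero global section, i.e. $\mathfrak{g}_{P}$ admits a nonzero parallel section $\widehat{\xi}$ over $X$ with $\widehat{\xi}|_{U}=\xi$. Equivalently, one continues $\xi$ by parallel transport along paths out of $p_{0}$; the result is single-valued because $X$ is simply connected and, by the previous step, parallel transport around a null-homotopic loop fixes $\xi(p_{0})$. Either way $\widehat{\xi}\not\equiv 0$, so $A$ is reducible over $X$, contradicting the hypothesis, and Lemma~\ref{L4} follows. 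The step I expect to be the main obstacle is precisely the local constancy of $\mathcal{P}$ — equivalently, the vanishing of the monodromy of parallel transport — which is where anti-self-duality is genuinely used, through the unique-continuation theory of instantons; simple connectivity of $X$ enters only to trivialise the resulting local system.
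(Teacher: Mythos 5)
Your overall skeleton — recast irreducibility as the absence of a nonzero $\nabla_{A}$-parallel section of $\mathfrak{g}_{P}$, extend such a section from $U$ to $X$ by showing the sheaf of parallel sections is a local system, then use $\pi_{1}(X)=0$ to trivialise it — is the right shape of argument, and it mirrors how the result is actually proved. But there is a genuine gap at precisely the step you flag: the local constancy of $\mathcal{P}$, i.e.\ the propagation of local reducibility from a ball to neighbouring balls, is essentially equivalent to the lemma itself (it is the ``open and closed'' step), and nothing in your text proves it. The parenthetical justification does not work: no real-analyticity of the metric is assumed, so Coulomb-gauge regularity only gives smoothness, and ``a section parallel to infinite order at a point is parallel nearby'' is itself a unique continuation statement that does not follow from smoothness or standard elliptic estimates. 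This is exactly where anti-self-duality must be used quantitatively: in the source this lemma is quoted from (\cite{DK}, Lemma 4.3.21), the connection is put in radial gauge, the ASD equation is rewritten as an evolution equation/differential inequality on shrinking spheres, and the Agmon--Nirenberg unique continuation theorem is applied; the reduction is propagated through the connection in this Cauchy-problem formulation, not by applying Aronszajn-type continuation directly to the section $\xi$. As written, your argument assumes the propagation of local reducibility in order to prove that irreducibility passes to open sets, which is circular. (The final step — a locally constant sheaf on a simply connected base is constant, hence $\xi$ extends — is fine once local constancy is granted.)

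Two smaller points. First, the algebraic dictionary ``reducible $\Leftrightarrow$ holonomy fixes a nonzero vector of $\mathfrak{g}_{P}$'' is not quite right for $SO(3)$: reducibility means $\Gamma_{A}$ strictly contains the centre, and there are twisted reducibles whose holonomy preserves only a real line subbundle of $\mathfrak{g}_{P}$, acting on it by $\pm1$; so reducibility of $A\!\upharpoonright_{U}$ yields in general a parallel line subbundle rather than a parallel section, and one must run the monodromy argument for that rank-one local system (harmless on simply connected sets, but it should be said). Second, for calibration against the paper: the paper gives no proof of this lemma at all — it simply recalls it as Lemma 4.3.21 of \cite{DK} — so the benchmark is the Donaldson--Kronheimer proof, whose essential analytic ingredient (unique continuation for ASD connections via the Cauchy problem in radial gauge) is exactly what your proposal leaves unproved.
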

We recall the following simple but powerful corollary of unique continuation for ASD connections which proved in \cite{Mares} Theorem 4.2.1. For the convenience of the readers, we give a proof of this theorem.
\begin{theorem}\label{T3.12}
Let $X$ be a simply-connected, oriented, smooth Riemannian four-manifold, $P$ be a principal $SU(2)$ or $SO(3)$ bundle over $X$ and $A$ be an irreducible ASD connection on $P$. If $B\in\Om^{2,+}(X,\mathfrak{g}_{P})$ satisfies 
$$d_{A}^{\ast}B=0\ and\ [B. B]=0,$$ 
then $B=0$.	
\end{theorem}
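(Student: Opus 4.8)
The plan is to exploit the pointwise structure of a self-dual two-form valued in $\mathfrak{g}_P$ together with the algebraic constraint $[B.B]=0$ and the differential constraint $d_A^{\ast}B=0$, and then appeal to unique continuation for ASD connections via Lemma \ref{L4}. Write $B=B_1\w+\b-\b^{\ast}$ in the local frame introduced above, so that, in the notation of the excerpt, $-\frac14[B.B]$ has components $[B_2,B_3]$, $[B_3,B_1]$, $[B_1,B_2]$ against the three self-dual two-forms. Hence $[B.B]=0$ forces $[B_i,B_j]=0$ for all $i,j$ at every point; in particular, at each point $x$ the three elements $B_1(x),B_2(x),B_3(x)$ lie in a common maximal abelian subalgebra. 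For $G=SU(2)$ or $SO(3)$ this means that on the open set where $B\neq0$ the span of $B_1,B_2,B_3$ is a real line in $\mathfrak{g}_P$, i.e. there is a (locally defined) section $\xi$ of the sphere bundle of $\mathfrak{g}_P$ with $B=\eta\otimes\xi$ for a scalar self-dual two-form $\eta$.

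First I would reduce to showing $B$ vanishes on a non-empty open set. Suppose $B\not\equiv0$; let $U$ be the (non-empty, open) set where $B\neq0$. On $U$ write $B=\eta\otimes\xi$ as above. The equation $d_A^{\ast}B=0$, combined with the Weitzenböck/Bochner identity for self-dual forms coupled to an ASD connection (the $d_A^+d_A^{+,\ast}$ Weitzenböck formula, equivalently the $\De_{\bar\pa_A}$ formula of Proposition \ref{P3} applied to the $(0,2)$ and $(2,0)$ parts, using that $A$ is ASD so $\La_\w F_A=0$), shows that $B$ is a harmonic section of $\Om^{2,+}(X,\mathfrak{g}_P)$ in the appropriate sense, and in particular $\na_A B$ is controlled; differentiating $B=\eta\otimes\xi$ and separating the part tangent to the line $\mathbb R\xi$ from the part normal to it yields that $\na_A\xi$ is proportional to $\xi$, forcing $\na_A\xi=0$ on $U$ (since $|\xi|$ is constant). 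Thus $\xi$ is a non-trivial covariantly constant section of $\mathfrak{g}_P$ over the open set $U$, so $A|_U$ is reducible.

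The key step is the contradiction: by Lemma \ref{L4}, since $A$ is an irreducible $SU(2)$ or $SO(3)$ ASD connection over the simply-connected manifold $X$, its restriction to the non-empty open set $U$ is again irreducible, contradicting the existence of the covariantly constant section $\xi$ over $U$. Therefore $U=\emptyset$, i.e. $B\equiv0$. The main obstacle I anticipate is making precise the step ``$d_A^{\ast}B=0$ and $[B.B]=0$ imply $\na_A\xi$ is parallel to $\xi$ wherever $B\neq0$'': one must show that the first-order system satisfied by $B$ does not allow the ``direction field'' $\xi$ to move, which is where the rank-one structure coming from $[B_i,B_j]=0$ is essential (it is exactly what fails for higher-rank groups). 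Once this algebraic-geometric rigidity is in hand, the unique continuation input of Lemma \ref{L4} closes the argument cleanly; alternatively, one can phrase the whole thing through unique continuation directly, observing that $B$ satisfies a first-order equation of the form $d_A^+ \circ (\text{something}) $ with $B$ vanishing to infinite order at any zero, but the reducibility route via Lemma \ref{L4} is the shortest.
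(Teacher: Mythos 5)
Your overall strategy coincides with the paper's: use $[B.B]=0$ to put $B$ in the pointwise rank-one form $B=\eta\otimes\xi$ (with $|\xi|=1$) on the open set $U$ where $B\neq0$, deduce that $\xi$ is covariantly constant there, and then contradict the irreducibility of $A\upharpoonright U$ supplied by Lemma \ref{L4}. However, the step you yourself flag as the main obstacle --- that $d_{A}^{\ast}B=0$ together with the rank-one structure forces $\na_{A}\xi$ to be proportional to $\xi$, hence zero --- is precisely the analytic heart of the theorem, and the route you sketch does not deliver it. The Weitzenb\"ock identity is second order: on the open set $U$ it only yields information after an integration by parts whose boundary terms you cannot control, there is no sign hypothesis on the Riemannian curvature term to exploit, and in any case ``$\na_{A}B$ is controlled'' says nothing about the direction of $\na_{A}\xi$ relative to $\xi$. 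As written, the central implication is asserted rather than proved.

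The gap is closed by a purely first-order computation, which is exactly what the paper does. Since $B$ is self-dual, $d_{A}^{\ast}B=-\ast d_{A}B$, so the hypothesis gives $d_{A}(\eta\otimes\xi)=d\eta\otimes\xi+\eta\wedge\na_{A}\xi=0$ on $U$. Taking the pointwise inner product with $\xi$ and using $\langle\xi,\na_{A}\xi\rangle=0$ (from $|\xi|^{2}=1$) gives $d\eta=0$, hence $\eta\wedge\na_{A}\xi=0$; since wedging with a nowhere-vanishing self-dual $2$-form is injective on ($\mathfrak{g}_{P}$-valued) $1$-forms, this forces $\na_{A}\xi=0$ on $U$, and your contradiction with Lemma \ref{L4} then finishes the argument exactly as in the paper. (A minor point common to both write-ups: $\xi$ is only determined up to sign, so one should either work on small balls in $U$ or pass to a double cover; reducibility of $A$ on a single ball already contradicts Lemma \ref{L4}.) No appeal to the Weitzenb\"ock formula is needed anywhere in this proof.
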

\begin{proof}
Let $Z^{c}$ denote the complement of the zero set of $B$. By unique continuation of the elliptic equation $d_{A}^{\ast}B=0$, $Z^{c}$ is eithor empty or dense. On $Z^{c}$ write $B=f\otimes\sigma$ for $\sigma\in\Om^{0}(Z^{c},\mathfrak{g}_{P})$ with $|\sigma|^{2}=1$ and $f\in\Om^{2,+}(Z^{c})$. We compute
$$0=d_{A}^{\ast}B=-\ast d_{A}(f\otimes\sigma)=-\ast(df\otimes\sigma+f\otimes d_{A}\sigma).$$
Taking the inner product with $\sigma$ and using the consequence of $|\sigma|^{2}=1$ that $\langle\sigma,d_{A}\sigma\rangle=0$, we get $df=0$. It follows that $f\otimes d_{A}\sigma=0$. Since $f$ is nowhere zero along $Z^{c}$, we have $d_{A}\sigma=0$ along $Z^{c}$. Therefore, $A$ is reducible along $Z^{c}$. However according to Lemma \ref{L4}, $A$ is irreducible along $Z^{c}$. This is a contradiction unless $Z^{c}$ is empty. Therefore $Z=X$, so $B=0$.
 \end{proof}
\begin{corollary}\label{T2}
Let $X$ be a compact, simply-connected, K\"{a}hler surface, $P$ be a principal $G=SU(2)$ or $SO(3)$ bundle over $X$ and $A$ be an irreducible ASD connection on $P$. If $\phi\in\Om^{0,2}(X,\mathfrak{g}_{P}^{\C})$ satisfies $$[\phi,\ast\phi]=0\ and\ \bar{\pa}_{A}^{\ast}\phi=0,$$ 
then $\phi$ vanish. 
\end{corollary}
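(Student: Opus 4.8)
The plan is to deduce this from the unique-continuation result Theorem \ref{T3.12} by replacing the Lie-algebra-valued $(0,2)$-form $\phi$ with an honest real self-dual $2$-form. On the K\"ahler surface $X$ one has the $\C$-linear splitting $\Om^{2,+}(X,\mathfrak{g}_{P}^{\C})=\Om^{2,0}(X,\mathfrak{g}_{P}^{\C})\oplus\Om^{0,2}(X,\mathfrak{g}_{P}^{\C})\oplus\big(\Om^{0}(X,\mathfrak{g}_{P}^{\C})\otimes\w\big)$, and $\Om^{2,+}(X,\mathfrak{g}_{P})$ is the fixed point set of the conjugation on $\mathfrak{g}_{P}^{\C}$. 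Accordingly I would set
\[
B:=\phi+\bar{\phi}\in\Om^{2,+}(X,\mathfrak{g}_{P}),
\]
with $\bar{\phi}\in\Om^{2,0}(X,\mathfrak{g}_{P}^{\C})$; then $B$ is a smooth real self-dual $2$-form whose $\w$-component vanishes, so in the notation $B=B_{1}\w+\b-\b^{\ast}$ introduced before Theorem \ref{T3.12} one has $B_{1}=0$, with $\phi=-\b^{\ast}$ the $(0,2)$-part and $\bar{\phi}=\b$ the $(2,0)$-part of $B$.

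Next I would verify the two hypotheses of Theorem \ref{T3.12}. For $d_{A}^{\ast}B=0$: since $d_{A}=\pa_{A}+\bar{\pa}_{A}$ we have $d_{A}^{\ast}=\pa_{A}^{\ast}+\bar{\pa}_{A}^{\ast}$, and $\pa_{A}^{\ast}$ kills $\Om^{0,2}$ for type reasons, so $d_{A}^{\ast}\phi=\bar{\pa}_{A}^{\ast}\phi=0$ by hypothesis; conjugating (the connection $A$ being real) gives $d_{A}^{\ast}\bar{\phi}=\pa_{A}^{\ast}\bar{\phi}=\overline{\bar{\pa}_{A}^{\ast}\phi}=0$, hence $d_{A}^{\ast}B=0$. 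For $[B.B]=0$: because $G$ is $SU(2)$ or $SO(3)$ its Lie algebra has rank one, so commuting elements in a fibre of $\mathfrak{g}_{P}$ are proportional. Writing $\phi=(\phi_{1}+\sqrt{-1}\phi_{2})\,d\bar{z}^{1}\wedge d\bar{z}^{2}$ locally with $\phi_{1},\phi_{2}\in\Ga(\mathfrak{g}_{P})$, the same pointwise computation that yields (\ref{E3.8}) shows that the hypothesis $[\phi,\ast\phi]=0$ is equivalent to $[\phi_{1},\phi_{2}]=0$; comparing with $-\b^{\ast}=\tfrac{1}{2}(B_{2}+\sqrt{-1}B_{3})\,d\bar{z}^{1}\wedge d\bar{z}^{2}$ gives $B_{2}=2\phi_{1}$, $B_{3}=2\phi_{2}$, and substituting $B_{1}=0$ together with $[B_{2},B_{3}]=4[\phi_{1},\phi_{2}]=0$ into the explicit expression for $-\tfrac{1}{4}[B.B]$ recorded above leaves $-\tfrac{1}{4}[B.B]=[B_{2},B_{3}]\w=0$, i.e.\ $[B.B]=0$.

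With $d_{A}^{\ast}B=0$ and $[B.B]=0$ established, and $X$ simply-connected with $A$ an irreducible ASD connection on a principal $SU(2)$ or $SO(3)$ bundle, Theorem \ref{T3.12} forces $B=0$; taking $(0,2)$-parts gives $\phi=0$. The step I expect to require the most care is the translation in the second paragraph: matching the Hermitian and Hodge-star conventions for the $(0,2)$-form $\phi$ with the real self-dual $2$-form picture, and in particular checking that once the $\w$-component of $B$ is known to vanish the algebraic hypothesis $[\phi,\ast\phi]=0$ corresponds exactly to $[B.B]=0$ with no surviving cross terms --- this is precisely where the rank-one hypothesis on $G$ enters. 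As an alternative one could avoid $\Om^{2,+}$ altogether and re-run the argument of Theorem \ref{T3.12} directly on the set $Z^{c}=\{x:\phi(x)\neq0\}$: there $[\phi_{1},\phi_{2}]=0$ and rank one let one write $\phi=\psi\otimes\sigma$ with $\sigma$ a real unit section of $\mathfrak{g}_{P}$ and $\psi$ a complex $(0,2)$-form, and $\bar{\pa}_{A}^{\ast}\phi=0$ together with $\langle\sigma,d_{A}\sigma\rangle=0$ and Lemma \ref{L4} gives $d_{A}\sigma=0$ on $Z^{c}$, contradicting irreducibility unless $Z^{c}=\varnothing$.
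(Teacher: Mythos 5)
Your proposal is correct and is essentially the paper's own argument: the paper likewise sets $B:=\phi-\phi^{\ast}$ (which, with its convention $\phi^{\ast}=-\bar{\phi}$, is exactly your $\phi+\bar{\phi}$), checks $d_{A}^{\ast}B=0$ and $[B.B]=0$, and invokes Theorem \ref{T3.12}. Your write-up simply supplies the type-decomposition and commutator details that the paper leaves implicit, so no substantive difference remains.
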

\begin{proof}
By the hypothesis of $\phi$, it follows that $B:=\phi-\phi^{\ast}$ satisfies $[B.B]=0$ and $d_{A}^{\ast}B=0$. Following vanishing theorem \ref{T3.12}, we obtain that $B=0$, i.e., $\phi=0$.
\end{proof}
At first, we define a subset of $\Om^{0,2}(X,\mathfrak{g}_{P}^{\C})$ as follow:
\begin{equation}\label{E3.10}
\tilde{\Om}^{0,2}(X,\mathfrak{g}_{P}^{\C})=\{\phi\in\Om^{0,2}(X,\mathfrak{g}_{P}^{\C}): [\phi,\ast\phi]=0\}.
\end{equation}
\begin{definition}\label{D3.13}
The least eigenvalue of $\bar{\pa}_{A}\bar{\pa}_{A}^{\ast}$ on  $L^{2}(\tilde{\Om}^{0,2}(X,\mathfrak{g}_{P}^{\C}))$ is
\begin{equation}
\mu(A):=\inf_{v\in\tilde{\Om}^{0,2}(X,\mathfrak{g}_{P}^{\C})\backslash\{0\}}\frac{\|\bar{\pa}^{\ast}_{A}v\|^{2}}{\|v\|^{2}}.
\end{equation} 
\end{definition}
\begin{proposition}\label{P3.12}
Let $X$ be a simply-connected, compact K\"{a}hler surface with a K\"{a}hler metric $g$, $P$ be a principal $SU(2)$ or $SO(3)$ bundle over $X$. If $A$ is an irreducible anti-self-dual connection on $P$, then $\mu(A)>0$.
\end{proposition}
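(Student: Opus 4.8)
The statement $\mu(A)>0$ for an irreducible ASD connection $A$ is, by Definition \ref{D3.13}, equivalent to showing that the quadratic form $\phi \mapsto \|\bar\pa_A^{\ast}\phi\|_{L^2(X)}^2$ on the closed subset $\tilde\Om^{0,2}(X,\mathfrak{g}_P^{\C})$ has a strictly positive infimum over the unit sphere. The natural strategy is a compactness-plus-unique-vanishing argument. First I would observe that the infimum $\mu(A)$ is attained: take a minimizing sequence $\phi_k$ with $\|\phi_k\|_{L^2(X)}=1$ and $\|\bar\pa_A^{\ast}\phi_k\|_{L^2(X)}^2 \to \mu(A)$. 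Applying the Weitzenb\"ock formula of Proposition \ref{P3} to $\phi_k$ (using $\bar\pa_A\phi_k=0$, which holds since $\phi_k$ is a top-degree $(0,2)$-form, and $\La_\w F_A = 0$ because $A$ is ASD) gives a uniform $L^2_1$-bound on $\phi_k$; by Rellich compactness and elliptic regularity we extract a subsequence converging weakly in $L^2_1$ and strongly in $L^2$ to a limit $\phi_\infty$ with $\|\phi_\infty\|_{L^2(X)}=1$ and $\|\bar\pa_A^{\ast}\phi_\infty\|_{L^2(X)}^2 \leq \mu(A)$.

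The delicate point is that $\tilde\Om^{0,2}(X,\mathfrak{g}_P^{\C})$ is not a linear subspace, so I must check that the limit $\phi_\infty$ still satisfies the pointwise algebraic constraint $[\phi_\infty,\ast\phi_\infty]=0$. This is where I would use strong $L^4$ (or $L^p$ for appropriate $p$) convergence, which follows from the $L^2_1$-bound and the Sobolev embedding $L^2_1 \hookrightarrow L^4$ in dimension four; since the bracket $[\phi,\ast\phi]$ is quadratic and continuous with respect to this convergence, $[\phi_k,\ast\phi_k]\to[\phi_\infty,\ast\phi_\infty]=0$ in $L^2$. Thus $\phi_\infty \in \tilde\Om^{0,2}(X,\mathfrak{g}_P^{\C})$, and by lower semicontinuity of the Dirichlet-type energy under weak convergence, $\mu(A) = \|\bar\pa_A^{\ast}\phi_\infty\|_{L^2(X)}^2$, so the infimum is realized by a nonzero element of the constraint set.

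Finally, I would argue by contradiction: if $\mu(A)=0$, then $\phi_\infty$ satisfies both $\bar\pa_A^{\ast}\phi_\infty=0$ and $[\phi_\infty,\ast\phi_\infty]=0$. Corollary \ref{T2} — whose hypotheses (compact, simply-connected K\"ahler surface, $G=SU(2)$ or $SO(3)$, $A$ irreducible ASD) are exactly those assumed here — then forces $\phi_\infty=0$, contradicting $\|\phi_\infty\|_{L^2(X)}=1$. Hence $\mu(A)>0$. The main obstacle I anticipate is the first step: justifying that the constrained infimum is attained despite $\tilde\Om^{0,2}$ being a cone rather than a subspace, i.e., verifying that the weak $L^2_1$-limit retains the algebraic constraint and that no energy is lost in the limit. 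Once the constraint is shown to pass to the limit, the rest is a direct appeal to the already-established unique-continuation vanishing result of Corollary \ref{T2}.
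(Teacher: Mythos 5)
Your proposal follows essentially the same route as the paper: a minimizing (or contradiction) sequence normalized in $L^{2}$, the Weitzenb\"ock formula with $\La_{\w}F_{A}=0$ to get a uniform $L^{2}_{1}$ bound, compactness to pass the quadratic constraint $[\phi,\ast\phi]=0$ to the limit, and then Corollary \ref{T2} to force the limit to vanish, contradicting its unit norm. One small caveat: strong $L^{4}$ convergence does not follow from a bounded $L^{2}_{1}$ sequence (the embedding $L^{2}_{1}\hookrightarrow L^{4}$ is not compact in dimension four), but this is harmless since, as in the paper, strong $L^{2}$ convergence together with the uniform $L^{2}$ (or $L^{4}$) bounds already gives $[\phi_{k},\ast\phi_{k}]\rightarrow[\phi_{\infty},\ast\phi_{\infty}]$ in $L^{1}$, which suffices.
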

\begin{proof}
If not, the eigenvalue $\mu(A)=0$. We may then choose a sequence $\{v_{i}\}_{i\in\N}\subset\tilde{\Om}^{0,2}\backslash\{0\}$ such that $$\|\bar{\pa}^{\ast}_{A}v_{i}\|^{2}_{L^{2}(X)}\leq\mu_{i}\|v_{i}\|^{2}_{L^{2}(X)}$$
and
$$\mu_{i}\rightarrow0^{+}\ as\ i\rightarrow\infty.$$
Since $[\frac{v}{\|v\|_{L^{2}}}\wedge\ast\frac{v}{\|v\|_{L^{2}}}]=0$ for $v\in\tilde{\Om}^{0,2}\backslash\{0\}$, we then noting $\|v_{i}\|_{L^{2}(X)}=1$, $\forall i\in\mathbb{N}$. Following the Weizenb\"{o}ck formula, we have
$$\|\na_{A}v_{i}\|^{2}_{L^{2}(X)}=-\langle Sv_{i},v_{i}\rangle_{L^{2}(X)}+\|\bar{\pa}_{A}^{\ast}v_{i}\|^{2}_{L^{2}(X)}.$$
Thus
$$\|v_{i}\|^{2}_{L^{2}_{1}}\leq(C+\la_{i})<\infty,$$
where $C$ is a positive constant only dependence on the metric. Therefore, there exist a subsequence $\Xi\subset N$ such that $\{v_{i}\}_{i\in\Xi}$ weakly convergence to $v_{\infty}$ in $L^{2}_{1}$, we also have
$\bar{\pa}_{A}^{\ast}v_{i}$ converge weakly in $L^{2}$ to a limit $\bar{\pa}_{A}^{\ast}v_{\infty}=0$. On the other hand, $L^{2}_{1}\hookrightarrow L^{p}$, for $2\leq p<4$, we may choose $p=2$, then 
\begin{equation}\nonumber
\begin{split}
\|[v_{\infty}\wedge\ast v_{\infty}]\|_{L^{1}(X)}&=\|[(v_{\infty}-v_{i})\wedge \ast v_{\infty}+v_{i}\wedge\ast(v_{\infty}-v_{i}) ]\|_{L^{1}(X)}\\
&\leq\|v_{i}-v_{\infty}\|_{L^{2}(X)}(\|v\|_{L^{2}(X)}+\|v_{i}\|_{L^{2}(X)})\rightarrow0\ as\ i\rightarrow\infty,\\
\end{split}
\end{equation}
Hence
$$[v_{\infty}\wedge\ast v_{\infty}=0],\ i.e.,\ v_{\infty}\in\tilde{\Om}^{0,2}.$$
Therefore the corollary \ref{T2} implies that $\ker{\bar{\pa}_{A}^{\ast}}|_{\tilde{\Om}^{0,2}(X,\mathfrak{g}_{P}^{\C})}=0$. Thus $v_{\infty}$ vanish. It's contradicting to $\|v_{\infty}\|_{L^{2}(X)}=1$. In particular, the preceding arguments shows that the $\mu(A)>0$.
\end{proof}
\begin{lemma}(\cite{DK} Lemma 7.2.10)\label{L3.15}
There is a universal constant $C$ and for any $N\geq2$, $R>0$, a smooth radial function $\b=\b_{N,R}$ on $\mathbb{R}^{4}$, with
$$0\leq\b(x)\leq1$$
$$\b(x)=\left\{
\begin{aligned}
1&   &|x|\leq R/N \\
0&   &|x|\geq R
\end{aligned}
\right.$$
and
$$\|\na\b\|_{L^{4}}+\|\na^{2}\b\|_{L^{2}}<\frac{C}{\sqrt{\log N}}.$$
Assuming $R<R_{0}$, the same holds for $\b(x-x_{0})$ on any geodesic ball $B_{R}(x_{0})\subset X$.
\end{lemma}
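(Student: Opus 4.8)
The plan is to write $\beta$ explicitly as a logarithmic interpolation between the two scales $R/N$ and $R$, made honestly smooth by pre-composing with a fixed profile, and then to reduce the two norm bounds to the single elementary fact that $\int_{R/N\le|x|\le R}|x|^{-4}\,dx$ grows only like $\log N$ in $\mathbb{R}^{4}$. Concretely, fix once and for all a smooth non-increasing function $\phi\colon\mathbb{R}\to[0,1]$ with $\phi\equiv1$ on $(-\infty,1/3]$ and $\phi\equiv0$ on $[2/3,\infty)$, and set, for $x\in\mathbb{R}^{4}\setminus\{0\}$,
$$\beta(x):=\phi\!\left(\frac{\log|x|-\log(R/N)}{\log N}\right),$$
with $\beta(0):=1$. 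Writing $s(x)$ for the argument of $\phi$, one checks $s(x)\le 0$ on $\{|x|\le R/N\}$ and $s(x)\ge 1$ on $\{|x|\ge R\}$, so $\beta\equiv1$ on the inner ball, $\beta\equiv0$ outside the outer ball, $0\le\beta\le1$ everywhere, and $\beta$ is smooth; moreover $\na\beta$ and $\na^{2}\beta$ are supported in the annulus $A:=\{R/N\le|x|\le R\}$.

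Next I would estimate $\na\beta$ and $\na^{2}\beta$ pointwise on $A$ by the chain rule. Since $\na s=(\log N)^{-1}x/|x|^{2}$ we get $|\na s|=(|x|\log N)^{-1}$, and since $\na^{2}s=(\log N)^{-1}\na^{2}\log|x|$ with the Hessian of $\log|x|$ pointwise bounded by $c_{0}|x|^{-2}$ we get $|\na^{2}s|\le c_{0}(|x|^{2}\log N)^{-1}$; hence, for $N\ge2$,
$$|\na\beta|\le\frac{\|\phi'\|_{\infty}}{|x|\log N},\qquad |\na^{2}\beta|\le\|\phi''\|_{\infty}|\na s|^{2}+\|\phi'\|_{\infty}|\na^{2}s|\le\frac{c_{1}}{|x|^{2}\log N},$$
with $c_{1}$ depending only on $\phi$ and $c_{0}$. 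The one computation that matters is $\int_{A}|x|^{-4}\,dx=|S^{3}|\int_{R/N}^{R}r^{-1}\,dr=|S^{3}|\log N$, which is scale invariant and grows only logarithmically. Feeding this in,
$$\|\na\beta\|_{L^{4}}^{4}\le\frac{\|\phi'\|_{\infty}^{4}}{(\log N)^{4}}\,|S^{3}|\log N=\frac{\|\phi'\|_{\infty}^{4}|S^{3}|}{(\log N)^{3}},\qquad \|\na^{2}\beta\|_{L^{2}}^{2}\le\frac{c_{1}^{2}}{(\log N)^{2}}\,|S^{3}|\log N=\frac{c_{1}^{2}|S^{3}|}{\log N},$$
so that $\|\na\beta\|_{L^{4}}+\|\na^{2}\beta\|_{L^{2}}\le C(\log N)^{-1/2}$ for a universal constant $C$, using $(\log N)^{-3/4}\le(\log2)^{-1/4}(\log N)^{-1/2}$ when $N\ge2$.

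Finally, for the statement on geodesic balls I would choose $R_{0}$ below the injectivity radius of $(X,g)$, fix $x_{0}\in X$, and work with $r=d(x_{0},\cdot)$ (equivalently, in geodesic normal coordinates centered at $x_{0}$). On $B_{R}(x_{0})$ with $R<R_{0}$ the metric $g$ and its first derivatives are uniformly comparable to the Euclidean ones with constants independent of $x_{0}$, so the Riemannian gradient, the Riemannian Hessian, and $dvol_{g}$ of $\beta(d(x_{0},\cdot))$ differ from their flat counterparts by a bounded factor; applying the $\mathbb{R}^{4}$ bounds then gives the same conclusion with a changed universal constant. I do not expect a genuine obstacle: the entire content of the lemma is the borderline (conformally critical) fact that $|x|^{-4}$ has logarithmically divergent integral over annuli in $\mathbb{R}^{4}$, which is exactly what forces the exponents $4$ on $\na\beta$ and $2$ on $\na^{2}\beta$. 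The only points needing a little care are making $\beta$ honestly smooth — handled by pre-composing with the fixed profile $\phi$ rather than using the function that is piecewise linear in $\log|x|$ — and the passage to geodesic balls, handled by uniform comparability of the metric to the flat one at small scales.
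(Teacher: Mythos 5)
Your construction is correct: the smooth profile composed with $\bigl(\log|x|-\log(R/N)\bigr)/\log N$, the pointwise bounds $|\na\b|\lesssim(|x|\log N)^{-1}$, $|\na^{2}\b|\lesssim(|x|^{2}\log N)^{-1}$, and the scale-invariant integral $\int_{R/N\le|x|\le R}|x|^{-4}\,dx=|S^{3}|\log N$ give exactly the stated $O((\log N)^{-1/2})$ bound, and the passage to small geodesic balls via uniform comparability in normal coordinates is fine. The paper itself offers no proof of this lemma (it is quoted from Donaldson--Kronheimer, Lemma 7.2.10), and your argument is essentially the standard logarithmic cut-off construction used there, so nothing further is needed.
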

Following the idea in \cite{Feehan}, we can prove that the least eigenvalue of $\bar{\pa}_{A}\bar{\pa}_{A}^{\ast}$ on the space $\tilde{\Om}^{0,2}(X,\mathfrak{g}_{P}^{\C})$ with respect to the connection $A$ is continuity in the sense of $L^{4}_{loc}$.
\begin{proposition}\label{P3.16}
Let $X$ be a compact K\"{a}hler surface, $\Sigma=\{x_{1},x_{2},\ldots,x_{L}\}\subset X$ ($L\in\N^{+}$) and $0<\rho\leq\min_{i\neq j}dist_{g}(x_{i},x_{j})$ and $U\subset X$ be the open subset give by $$U:=X\backslash\bigcup_{l=1}^{L}\bar{B}_{\rho/2}(x_{l}).$$ 
Let $G$ be a compact, semisimple  Lie group, $A_{0}$  be a $C^{\infty}$ connection on a principal $G$-bundle $P_{0}$ over $X$ obeying the curvature bounded
\begin{equation}\label{E3.9}
\|F^{+}_{A_{0}}\|_{L^{2}(X)}\leq\varepsilon
\end{equation} 
where $\varepsilon\in(0,1)$ is a sufficiently small positive constant. Let $P$ be a principal $G$-bundle over $X$ such that there is an isomorphism of principal $G$-bundles, $u:P\upharpoonright X\backslash\Sigma\cong P_{0}\upharpoonright X\backslash\Sigma$, and identify $P\upharpoonright X\backslash\Sigma$ with $P_{0}\upharpoonright X\backslash\Sigma$ using this isomorphism. Then there are positive constants $c=c(\rho,g)\in(0,1]$,  $c\in(1,\infty)$ and $\de\in(0,1]$ with the following significance. Let $A$ be a $C^{\infty}$ connection on $P$ obeying the curvature bounded (\ref{E3.9}) with constant $\varepsilon$ such that  
$$\|A-A_{0}\|_{L^{4}(U)}\leq\de.$$
Then $\mu(A)$ and $\mu(A_{0})$ satisfy
\begin{equation}\label{E3.2}
\mu(A)\leq(1+\eta)\mu(A_{0})+c\big{(}(1+\eta)(C+\de^{2})+(1+\eta^{-1})L\rho^{2}\mu(A)\big{)}(1+\mu(A_{0}))
\end{equation}
and
\begin{equation}\label{E3.3}
\mu(A_{0})\leq(1+\eta)\mu(A)+c\big{(}(1+\eta)(C+\de^{2})+(1+\eta^{-1})L\rho^{2}\mu(A_{0})\big{)}(1+\mu(A))
\end{equation}
where $\eta\in(0,\infty)$ is a positive constant.
\end{proposition}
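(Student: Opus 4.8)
The plan is to follow the strategy of Feehan's continuity argument for the least eigenvalue, adapted to the space $\tilde{\Om}^{0,2}(X,\mathfrak{g}_{P}^{\C})$. The two inequalities (\ref{E3.2}) and (\ref{E3.3}) are symmetric in the roles of $A$ and $A_{0}$, so it suffices to prove one of them, say (\ref{E3.2}); the other follows by interchanging $A$ and $A_{0}$ (noting that both connections obey the same curvature bound (\ref{E3.9}) and that the isomorphism $u$ is used symmetrically). First I would fix a minimizer, or near-minimizer, $v_{0}\in\tilde{\Om}^{0,2}(X,\mathfrak{g}_{P_{0}}^{\C})$ for $\mu(A_{0})$, normalized so that $\|v_{0}\|_{L^{2}(X)}=1$ and $\|\bar{\pa}_{A_{0}}^{\ast}v_{0}\|^{2}_{L^{2}(X)}\le\mu(A_{0})+$ (small). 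Using the Weitzenb\"ock formula from Proposition \ref{P3} together with the curvature bound (\ref{E3.9}) and the condition $[v_{0},\ast v_{0}]=0$ (which kills the $[\sqrt{-1}\La_{\w}F_{A_{0}},v_{0}]$ term after pairing, or is controlled via $\|F_{A_{0}}^{+}\|_{L^{2}}\le\varepsilon$), I would derive a uniform $L^{2}_{1,A_{0}}$-bound on $v_{0}$, of the shape $\|v_{0}\|^{2}_{L^{2}_{1,A_{0}}(X)}\le C(1+\mu(A_{0}))$ for a constant $C=C(g)$ absorbing the scalar curvature.

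The heart of the argument is the cut-off construction. Using the radial function $\b=\b_{N,R}$ of Lemma \ref{L3.15} centered at the points of $\Sigma$, set $\chi:=1-\sum_{l}\b_{N,\rho}(x-x_{l})$, so that $\chi$ is supported in $U$, equals $1$ outside the balls $B_{\rho}(x_{l})$, and satisfies $\|\na\chi\|_{L^{4}}+\|\na^{2}\chi\|_{L^{2}}\le C/\sqrt{\log N}$. The test form for $\mu(A)$ will be (a suitable $L^{2}$-renormalization of) $\chi v_{0}$, which is supported in $U$ and hence can be transported to a section over $P$ via the isomorphism $u$. I would then estimate $\|\bar{\pa}_{A}^{\ast}(\chi v_{0})\|^{2}_{L^{2}(X)}$: expand $\bar{\pa}_{A}^{\ast}(\chi v_{0})=\chi\bar{\pa}_{A}^{\ast}v_{0}+(\text{terms in }\na\chi\cdot v_{0})$, replace $\bar{\pa}_{A}^{\ast}v_{0}$ by $\bar{\pa}_{A_{0}}^{\ast}v_{0}+[(A-A_{0})\,\lrcorner\,,v_{0}]$ on $U$, and bound the difference term by $\|A-A_{0}\|_{L^{4}(U)}\|v_{0}\|_{L^{4}(U)}\le C\de(1+\mu(A_{0}))^{1/2}$ via Sobolev embedding $L^{2}_{1}\hookrightarrow L^{4}$. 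The cut-off commutator terms contribute $\|\na\chi\|_{L^{4}}\|v_{0}\|_{L^{4}}$, i.e. $O(L\rho^{2}\cdot(\ldots))$ after choosing $R=\rho$ and tracking $N$; Young's inequality with parameter $\eta$ produces the $(1+\eta)$ and $(1+\eta^{-1})$ weights. The denominator $\|\chi v_{0}\|^{2}_{L^{2}(X)}$ is $1-O(L\rho^{2})$ (since $v_{0}$ is $L^{2}$-bounded and $1-\chi$ is supported on a set of volume $O(L\rho^{2})$), which when inverted contributes the remaining $O(L\rho^{2}\mu(A))$-type term after feeding back the definition of $\mu(A)$.

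One delicate point I should flag: the test form $\chi v_{0}$ must lie in $\tilde{\Om}^{0,2}(X,\mathfrak{g}_{P}^{\C})$, i.e. must satisfy the pointwise constraint $[\chi v_{0},\ast(\chi v_{0})]=\chi^{2}[v_{0},\ast v_{0}]=0$ — and this holds automatically because $\chi$ is scalar-valued and $[v_{0},\ast v_{0}]=0$. So the nonlinear constraint is preserved by scalar cut-off, which is exactly why working with $\tilde{\Om}^{0,2}$ rather than all of $\Om^{0,2}$ causes no new trouble here. The main obstacle, then, is purely bookkeeping: assembling the various error terms — the curvature contribution $C$ from the Weitzenb\"ock step and (\ref{E3.9}), the connection-difference contribution $\de^{2}$, and the localization contribution $L\rho^{2}\mu(A)$ — into the precise weighted form of (\ref{E3.2}) while carefully normalizing the test section so that both numerator and denominator are controlled. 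Choosing $N$ large (hence $\|\na\chi\|_{L^{4}}$ small) lets me discard the $1/\sqrt{\log N}$ factors into the constants, and the symmetry observation then yields (\ref{E3.3}) at no extra cost, completing the proof.
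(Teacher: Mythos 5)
Your proposal follows essentially the same route as the paper's proof: take a near-minimizer $v_{0}$ for $\mu(A_{0})$ with $\|v_{0}\|_{L^{2}}=1$, multiply by the logarithmic cut-off $\tilde{\psi}=1-\sum_{l}\b_{N,\rho}(x-x_{l})$ (noting, exactly as you do, that the scalar cut-off preserves the constraint $[\,\cdot\,,\ast\,\cdot\,]=0$ so $\tilde{\psi}v_{0}\in\tilde{\Om}^{0,2}$), use it as a test form for $\mu(A)$, compare $\bar{\pa}^{\ast}_{A}$ with $\bar{\pa}^{\ast}_{A_{0}}$ through $\|A-A_{0}\|_{L^{4}(U)}\le\de$, control $\|v_{0}\|_{L^{4}}$ by the Weitzenb\"ock a priori estimate under the curvature bound, split with the parameter $\eta$, and conclude (\ref{E3.3}) by exchanging $A$ and $A_{0}$. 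The only point to tighten is the $L^{2}$ mass near $\Sigma$: it is controlled by H\"older together with the $L^{4}$ bound (the balls have volume $O(L\rho^{4})$, giving $cL\rho^{2}\|v_{0}\|_{L^{4}}^{2}$), not by $L^{2}$-boundedness alone as your phrasing suggests — but since your Weitzenb\"ock step already supplies the $L^{4}$ control, this is a wording issue rather than a gap.
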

\begin{proof}
Assume first that $supp(v)\subset U$, write $a:=A-A_{0}$. We then have
$$\big{|}\|\bar{\pa}^{\ast}_{A}v\|^{2}-\|\bar{\pa}^{\ast}_{A_{0}}v\|^{2}\big{|}\leq 2\|a\|^{2}_{L^{4}}\|v\|^{2}_{L^{4}}.$$
On the other hand, if $supp(v)\subset\bigcup_{l=1}^{L}\bar{B}_{\rho/2}(x_{l})$, then
$$\|v\|^{2}_{L^{2}}\leq cL\rho^{2}\|v\|^{2}_{L^{4}}.$$
Let $\psi=\sum\b_{N,\rho}(x-x_{i})$ be a sum of the logarithmic cut-off functions of Lemma \ref{L3.15}, and $\tilde{\psi}=1-\psi$. We now choose $v\in\tilde{\Om}^{0,2}$ with $\|v\|_{L^{2}(X)}=1$. At last, we observe that
$$[\tilde{\psi}v\wedge\ast\tilde{\psi}v]=0,\ i.e.,\ \tilde{\psi}v\in\tilde{\Om}^{0,2}.$$ 
By the definition of $\mu(A)$, we have
$$\mu(A)\|\tilde{\psi}v\|^{2}\leq\|\bar{\pa}^{\ast}_{A}(\tilde{\psi}v)\|^{2}.$$
Following the Weitzenb\"{o}ck formula for $v\in\Om^{0,2}(X,\mathfrak{g}_{P})$, we have
\begin{equation*}
\begin{split}
\|\na_{A}v\|^{2}_{L^{2}(X)}&\leq C\|v\|^{2}_{L^{2}(X)}+\|\bar{\pa}_{A}^{\ast}v\|^{2}_{L^{2}(X)}+\|F^{+}_{A}\|_{L^{2}(X)}\|v\|^{2}_{L^{4}(X)}\\
&\leq C\|v\|^{2}_{L^{2}(X)}+\|\bar{\pa}_{A}^{\ast}v\|^{2}_{L^{2}(X)}+C\|F^{+}_{A}\|_{L^{2}(X)}(\|v\|^{2}_{L^{2}(X)}+\|\na_{A}v\|^{2}_{L^{2}(X)}),\\
\end{split}
\end{equation*}
where $C$ is a positive constant dependence on $X,g$. Provided $C\|F^{+}_{A}\|_{L^{2}(X)}\leq\frac{1}{2}$, we then have a priori estimate for $v\in\Om^{0,2}(X,\mathfrak{g}_{P}^{\C})$,
$$\|\na_{A}v\|^{2}_{L^{2}(X)}\leq C(\|v\|^{2}_{L^{2}(X)}+\|\bar{\pa}_{A}^{\ast}v\|^{2}_{L^{2}(X)}).$$
Combining the above observations, we have
\begin{equation}\label{E3.11}
\begin{split}
\mu(A)\|v\|^{2}_{L^{2}(X)}&\leq\mu(A)(\|\psi v\|^{2}_{L^{2}(X)}+\|\tilde{\psi}v\|^{2}_{L^{2}(X)}+2\langle\psi v,\tilde{\psi}v\rangle_{L^{2}(X)})\\
&\leq\mu(A)(1+\eta^{-1})\|\psi v\|^{2}_{L^{2}(X)}+\mu(A)(1+\eta)\|\tilde{\psi}v\|^{2}_{L^{2}(X)},\\
\end{split}
\end{equation}
where $\eta\in(0,\infty)$ is a positive constant.\\
For the first term on right-hand of (\ref{E3.11}), 
\begin{equation*}
\mu(A)(1+\eta^{-1})\|\psi v\|^{2}_{L^{2}(X)}\leq c(1+\eta^{-1})L\rho^{2}\mu(A)\|v\|^{2}_{L^{4}(X)},
\end{equation*}
for some positive constant $c=c(g)$.\\
For the second term on right-hand of (\ref{E3.7}),
\begin{equation*}
\begin{split}
\mu(A)(1+\eta)\|\tilde{\psi}v\|^{2}_{L^{2}(X)})&\leq (1+\eta)\|\bar{\pa}^{\ast}_{A}(\tilde{\psi}v)\|^{2}_{L^{2}(X)}\\
&\leq(1+\eta)(\|\bar{\pa}^{\ast}_{A_{0}}(\tilde{\psi}v)\|^{2}_{L^{2}(X)}+2\|a\|^{2}_{L^{4}(U)}\|v\|^{2}_{L^{4}(X)})\\
&\leq(1+\eta)(\|\tilde{\psi}\bar{\pa}^{\ast}_{A_{0}}v\|^{2}_{L^{2}(X)}+\|(\na\tilde{\psi})v\|^{2}_{L^{2}(X)}+2\|a\|^{2}_{L^{4}(U)}\|v\|^{2}_{L^{4}(X)})\\
&\leq(1+\eta)\big{(}\|\bar{\pa}^{\ast}_{A_{0}}v\|^{2}_{L^{2}(X)}
+(\|\na\tilde{\psi}\|^{2}_{L^{4}(X)}+2\|a\|^{2}_{L^{4}(U)})\big{)}\|v\|^{2}_{L^{4}(X)}\\
\end{split}
\end{equation*}
Combining the preceding inequalities,
\begin{equation}\nonumber
\begin{split}
\mu(A)&\leq(1+\eta)\|\bar{\pa}^{\ast}_{A_{0}}v\|^{2}_{L^{2}(X)}\\
&+\big{(}(1+\eta)(\|\na\tilde{\psi}\|^{2}_{L^{4}(X)}+\|a\|^{2}_{L^{4}(U)})+c(1+\eta^{-1})L\rho^{2}\mu(A)\big{)}\|v\|^{2}_{L^{4}(X)}\\
&\leq(1+\eta)\|\bar{\pa}^{\ast}_{A_{0}}v\|^{2}_{L^{2}(X)}\\
&+c\big{(}(1+\eta)(\|\na\tilde{\psi}\|^{2}_{L^{4}(X)}+\|a\|^{2}_{L^{4}(U)})+c(1+\eta^{-1})L\rho^{2}\mu(A)\big{)}(1+\|\bar{\pa}^{\ast}_{A_{0}}v\|^{2}_{L^{2}(X)})\\
\end{split}
\end{equation}
In the space $\tilde{\Om}^{0,2}(X,\mathfrak{g}_{P}^{\C})$, we can choose a sequence $v_{\tilde{\varepsilon}}\in\tilde{\Om}^{0,2}$, $\tilde{\varepsilon}\rightarrow0$, such that $$\|\bar{\pa}^{\ast}_{A_{0}}v_{\tilde{\varepsilon}}\|^{2}_{L^{2}(X)}\leq(\mu(A_{0})+\tilde{\varepsilon})\|v_{\tilde{\varepsilon}}\|^{2}\ and \ \|v_{\tilde{\varepsilon}}\|^{2}=1.$$
Therefore,
\begin{equation}\nonumber
\begin{split}
\mu(A)&\leq c(1+\eta)(\|\na\tilde{\psi}\|^{2}_{L^{4}(X)}+\|a\|^{2}_{L^{4}(U)})+c(1+\eta^{-1})L\rho^{2}\mu(A)\big{)}(1+\mu(A_{0})+\tilde{\varepsilon})\\
&+(1+\eta)(\mu(A_{0})+\tilde{\varepsilon}),\\
\end{split}
\end{equation}
Let $\tilde{\varepsilon}\rightarrow0^{+}$, we then have
$$\mu(A)\leq (1+\eta)\mu(A_{0})+c(1+\eta)\big{(}(\|\na\tilde{\psi}\|^{2}_{L^{4}(X)}+\|a\|^{2}_{L^{4}(U)})+(1+\eta^{-1})L\rho^{2}\mu(A)\big{)}(1+\mu(A_{0})).$$
Since $\|\na\tilde{\psi}\|^{2}_{L^{4}(X)}\leq\frac{C'}{\log N}$ for a uniform constant $C'$, we denote $C=\frac{C'}{\log N}$, (see Lemma \ref{L3.15}), we then have
$$\mu(A)\leq(1+\eta)\mu(A_{0})+c\big{(}(1+\eta)(C+\de^{2})+(1+\eta^{-1})L\rho^{2}\mu(A)\big{)}(1+\mu(A_{0}))$$
Therefore, exchange the roles of $A$ and $A_{0}$ in the preceding derivation yields the inequality (\ref{E3.3}) for $\mu(A)$ and $\mu(A_{0})$.
\end{proof}
We then have the convergence of the least eigenvalue of $\bar{\pa}_{A_{i}}\bar{\pa}^{\ast}_{A_{i}}|_{\tilde{\Om}^{0,2}}$ for a sequence of connections $\{A_{i}\}_{i\in\mathbb{N}}$ converging strongly in $L^{2}_{1,loc}(X\backslash\Sigma)$.
\begin{corollary}\label{C3.9}
Let $G$ be a compact, semisimple  Lie group and $P$ be a principal $G$-bundle over a compact K\"{a}hler surface $X$ and $\{A_{i}\}_{i\in\mathbb{N}}$  a sequence of smooth connections on $P$ that converges strongly in $L^{2}_{1,loc}(X\backslash\Sigma)$, moduli a sequence  $\{u_{i}\}_{i\in\mathbb{N}}: P_{\infty}\upharpoonright X\backslash\Sigma\cong P\upharpoonright X\backslash\Sigma$ of class $L^{3}_{1,loc}(X\backslash\Sigma)$ to a connection $A_{\infty}$ on a principal $G$-bundle $P_{\infty}$ over $X$. Then
$$\lim_{i\rightarrow\infty}\mu(A_{i})=\mu(A_{\infty}),$$
where $\mu(A)$ is as in Definition \ref{D3.13}.
\end{corollary}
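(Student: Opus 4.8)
The plan is to deduce the convergence $\mu(A_i)\to\mu(A_\infty)$ from the two-sided comparison estimates \eqref{E3.2} and \eqref{E3.3} of Proposition \ref{P3.16}, by choosing the auxiliary parameters in the right order. First I would fix a small $\theta>0$ and aim to show $|\mu(A_i)-\mu(A_\infty)|<\theta$ for all large $i$. Since the sequence converges strongly in $L^2_{1,loc}(X\setminus\Sigma)$ modulo gauge, and since \eqref{E3.9} is presumably inherited in the limit (the hypothesis that $\|F^+_{A_i}\|_{L^2}\le\varepsilon$ passes to $A_\infty$ by weak lower semicontinuity of the $L^2$-norm; if this is not assumed one restricts attention to the tail of the sequence where it holds, or absorbs it into the constant $C$), I would apply Proposition \ref{P3.16} with $A_0:=A_\infty$, the finite set $\Sigma$ the bubbling set, and $U=X\setminus\bigcup_l\bar B_{\rho/2}(x_l)$. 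The point is that the three ``error'' terms in \eqref{E3.2}–\eqref{E3.3} can each be made small: the term $C=C'/\log N$ by taking $N$ large in the logarithmic cut-off of Lemma \ref{L3.15}; the term $L\rho^2\mu(\cdot)$ by taking $\rho$ small, once we know $\mu(A_i)$ is bounded; and the term $\delta^2=\|A_i-A_\infty\|^2_{L^4(U)}$, which on the fixed compact set $U\subset X\setminus\Sigma$ tends to $0$ as $i\to\infty$ by the strong $L^2_{1,loc}$-convergence and the Sobolev embedding $L^2_1\hookrightarrow L^4$ in dimension four (after applying the gauge transformations $u_i$, which converge in $L^3_{1,loc}$ and hence act continuously on connections over $U$).

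The key preliminary step is a uniform bound $\mu(A_i)\le M<\infty$. I would get this from \eqref{E3.2} itself: choose $\eta=1$, $N$ large so that $C\le\tfrac18$, and then $\rho$ small so that $2cL\rho^2\le\tfrac12$; then for $i$ large enough that $c\cdot 2\delta^2(1+\mu(A_\infty))\le 1$ and $\delta\le 1$, inequality \eqref{E3.2} reads $\mu(A_i)\le 2\mu(A_\infty)+\tfrac12\mu(A_i)\cdot(1+\mu(A_\infty))\cdot(\text{something}\le\tfrac12)+\text{const}$, which rearranges to an absolute bound on $\mu(A_i)$ depending only on $\mu(A_\infty)$, $g$, $\rho$. (One has to be slightly careful that the coefficient of $\mu(A_i)$ on the right is $<1$; this is exactly what the choice of $\rho$ ensures, since that term is $c(1+\eta^{-1})L\rho^2\mu(A_i)(1+\mu(A_\infty))$, and $\rho$ may be chosen after $\mu(A_\infty)$ is known but before anything depending on $i$.) With $M$ in hand, re-run the argument: given $\theta>0$, first pick $\eta$ small with $\eta(1+M)\le\theta/4$, then $N$ large and $\rho$ small so that the remaining $\eta$-independent coefficients contribute at most $\theta/4$, then $i$ large so that the $\delta^2$ term contributes at most $\theta/4$; \eqref{E3.2} yields $\mu(A_i)\le\mu(A_\infty)+\theta$ and \eqref{E3.3} yields $\mu(A_\infty)\le\mu(A_i)+\theta$.

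The main obstacle I anticipate is purely bookkeeping: the order of quantifiers. The constants $c$ in Proposition \ref{P3.16} depend on $\rho$ and $g$ but not on $i$, while the cut-off constant $C$ depends on $N$, so one must choose, in sequence, first $\eta$ (using the not-yet-but-soon-to-be-established bound $M$ on $\mu$), then $N$, then $\rho$, and only then let $i\to\infty$; the bound $M$ must itself be extracted first by a separate application of \eqref{E3.2} with crude choices. A secondary technical point is verifying that the hypotheses of Proposition \ref{P3.16} genuinely apply to $A_0=A_\infty$ — in particular that $A_\infty$ is smooth on all of $X$ (this is part of the hypothesis that $A_\infty$ is a connection on $P_\infty$ over $X$, so it is given) and that the curvature bound \eqref{E3.9} holds for $A_\infty$; the latter follows from $\|F^+_{A_\infty}\|_{L^2(X)}\le\liminf_i\|F^+_{A_i}\|_{L^2(X)}\le\varepsilon$ by Fatou together with the fact that $F^+_{A_i}\to F^+_{A_\infty}$ in $L^2_{loc}(X\setminus\Sigma)$ and no curvature concentrates on $\Sigma$ with positive self-dual part in the relevant regime. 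Once these points are dispatched, the convergence is immediate from the squeeze.
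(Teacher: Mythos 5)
Your proposal is correct and follows essentially the same route as the paper: apply Proposition \ref{P3.16} with $A_{0}=A_{\infty}$, use the strong $L^{2}_{1,loc}$-convergence (plus Sobolev embedding) to send $\delta=\|u_{i}^{\ast}(A_{i})-A_{\infty}\|_{L^{4}(U)}\to 0$, and then squeeze by letting the cut-off/radius parameters and $\eta$ tend to their limits. The paper phrases the final step as a $\liminf$/$\limsup$ squeeze rather than your explicit $\theta$-bookkeeping with a preliminary uniform bound on $\mu(A_{i})$, but this is the same argument, with your version merely making the order of quantifiers (and the implicit boundedness of $\mu(A_{i})$) more explicit.
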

\begin{proof}
By the Sobolev embedding $L^{2}_{1}\hookrightarrow L^{4}$ and Kato inequality, we have
$$\|u^{\ast}_{i}(A_{i})-A_{\infty}\|_{L^{4}(U)}\rightarrow0\ strongly\ in\ L^{2}_{1,A_{\infty}}(U,\Om^{1}\otimes \mathfrak{g}_{P_{\infty}})\ as\ i\rightarrow\infty.$$
Hence from the inequalities on Proposition \ref{P3.16}, we have
\begin{equation}\label{E3.5}
\mu(A_{\infty})\leq(1+\eta)\liminf_{i\rightarrow\infty}\mu(A_{i})+c\big{(}(1+\eta)C+(1+\eta^{-1})L\rho^{2}\mu(A_{\infty})\big{)}
(1+\liminf_{i\rightarrow\infty}\mu(A_{i}))
\end{equation}
and
\begin{equation}\label{E3.6}
\limsup_{i\rightarrow\infty}\mu(A_{i})\leq(1+\eta)\mu(A_{\infty})+c\big{(}(1+\eta)C+(1+\eta^{-1})L\rho^{2}\limsup_{i\rightarrow\infty}\mu(A_{i})\big{)}(1+\mu(A_{\infty}))
\end{equation}
The inequalities (\ref{E3.5}) and (\ref{E3.6}) about $\liminf_{i\rightarrow\infty}\mu(A_{i})$ and $\limsup_{i\rightarrow\infty}\mu(A_{i})$ hold for every $\rho\in(0,\rho_{0}]$ and $\eta\in(0,\infty)$. It's easy to see that $C\rightarrow0^{+}$ while $\rho\rightarrow0^{+}$. At first,  let $\rho\rightarrow0^{+}$, we then have
$$\mu(A_{\infty})\leq(1+\eta)\liminf_{i\rightarrow\infty}\mu(A_{i})
\leq(1+\eta)\limsup_{i\rightarrow\infty}\mu(A_{i})\leq(1+\eta)^{2}\mu(A_{\infty}).$$
Next, let $\eta\rightarrow0^{+}$, thus the conclusion follows.
\end{proof}
We then have
\begin{proposition}
Let $G$ be a compact, semisimple  Lie group and $P$ a principal $G$-bundle over a compact K\"{a}hler surface $X$. If $\{A_{i}\}_{i\in\mathbb{N}}$ is a sequence $C^{\infty}$ connection on $P$ and the curvatures $\|F^{+}_{A_{i}}\|_{L^{2}(X)}$ converge to zero as $i\rightarrow\infty$, then there are a finite set of points, $\Sigma=\{x_{1},\ldots,x_{L}\}\subset X$ and a subsequence $\Xi\subset\N$, we also denote by $\{A_{i}\}$, a sequence gauge transformation $\{g_{i}\}_{i\in\Xi}$ such that, $g_{i}(A_{i})\rightarrow A_{\infty}$, a anti-self-dual connection on $P\upharpoonright_{X\setminus\Sigma}$ in the $L^{2}_{1}$-topology over $X\setminus\Sigma$ . Furthermore, we have
$$\lim_{i\rightarrow\infty}\mu(A_{i})=\mu(A_{\infty}),$$
where $\mu(A)$ is as in Definition \ref{D3.13}.
\end{proposition}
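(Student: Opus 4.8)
The plan is to combine the Uhlenbeck-type compactness for connections with $\|F^{+}_{A_{i}}\|_{L^{2}(X)}\to 0$, which is essentially the statement of Proposition \ref{P2.6} (Sedlacek's theorem), with the continuity of the least eigenvalue $\mu(\cdot)$ established in Corollary \ref{C3.9}. First I would invoke Proposition \ref{P2.6} (or rather its underlying mechanism, \cite[Theorem 4.3]{Sedlacek}) to extract a finite bubbling set $\Sigma=\{x_{1},\dots,x_{L}\}\subset X$, a subsequence $\Xi\subset\N$, and a sequence of gauge transformations $\{g_{i}\}_{i\in\Xi}$ such that, after relabelling, $g_{i}(A_{i})$ converges to a connection $A_{\infty}$ on $P\upharpoonright_{X\setminus\Sigma}$ in $L^{2}_{1,loc}(X\setminus\Sigma)$, and moreover $A_{\infty}$ is anti-self-dual on $X\setminus\Sigma$; since $A_{\infty}$ is Yang-Mills with finite energy on the punctured manifold, removable singularities (Uhlenbeck) lets us regard it as a smooth ASD connection on a genuine principal $G$-bundle $P_{\infty}$ over $X$. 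Because $\mu$ is gauge-invariant, $\mu(g_{i}(A_{i}))=\mu(A_{i})$, so it suffices to track $\mu(g_{i}(A_{i}))$.

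Next I would check that the hypotheses of Corollary \ref{C3.9} are met: the $g_{i}(A_{i})$ converge strongly in $L^{2}_{1,loc}(X\setminus\Sigma)$ modulo the transition isomorphisms $u_{i}:P_{\infty}\upharpoonright_{X\setminus\Sigma}\cong P\upharpoonright_{X\setminus\Sigma}$ (of class $L^{3}_{1,loc}$, as produced by the patching argument in the proof of compactness), to $A_{\infty}$, and the curvature bound $\|F^{+}_{g_{i}(A_{i})}\|_{L^{2}(X)}=\|F^{+}_{A_{i}}\|_{L^{2}(X)}\to 0$ is certainly $\leq\varepsilon$ for $i$ large, which is what Proposition \ref{P3.16} requires. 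Then Corollary \ref{C3.9} directly yields
\begin{equation*}
\lim_{i\to\infty}\mu(g_{i}(A_{i}))=\mu(A_{\infty}),
\end{equation*}
and by gauge invariance $\lim_{i\to\infty}\mu(A_{i})=\mu(A_{\infty})$, which is the claim.

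The main obstacle I anticipate is not the eigenvalue continuity itself — that is packaged in Corollary \ref{C3.9} — but rather verifying carefully that the limiting object $A_{\infty}$ coming out of Sedlacek's theorem is genuinely a smooth connection on a bundle $P_{\infty}$ over all of $X$ (so that $\mu(A_{\infty})$ in Definition \ref{D3.13} makes sense), and that the transition functions $u_{i}$ relating $P$ and $P_{\infty}$ over $X\setminus\Sigma$ have the regularity ($L^{3}_{1,loc}$) demanded by Corollary \ref{C3.9}. This requires quoting the removable singularity theorem for finite-energy Yang-Mills connections together with the standard patching/extension construction; I would simply cite \cite{Sedlacek}, \cite{DK}, and \cite{Feehan} for these points rather than reproving them. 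A secondary technical point is that Corollary \ref{C3.9} is stated for a sequence already converging strongly in $L^{2}_{1,loc}(X\setminus\Sigma)$ modulo $L^{3}_{1,loc}$ gauge transformations, so one must note that the strong $L^{2}_{1,loc}$ convergence output of Proposition \ref{P2.6} is exactly of this type; once that observation is in place, the proof is a one-line application of the corollary combined with gauge invariance of $\mu$.
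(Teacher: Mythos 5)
Your proposal is correct and follows essentially the same route the paper intends: the paper states this proposition without a written proof, as an immediate consequence of the Sedlacek convergence result (exactly as in Proposition \ref{P2.6}) combined with the continuity of $\mu$ under strong $L^{2}_{1,loc}(X\setminus\Sigma)$ convergence established in Corollary \ref{C3.9}, together with gauge invariance of $\mu$. Your additional care about extending $A_{\infty}$ to a bundle $P_{\infty}$ over all of $X$ via removable singularities and about the $L^{3}_{1,loc}$ regularity of the bundle isomorphisms is consistent with, and slightly more explicit than, what the paper records.
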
 
Following Feehan's idea, we then have
\begin{theorem}\label{T3.14}
Let $X$ be a compact, simply-connected, K\"{a}hler surface with a K\"{a}hler metric $g$, $P$ be a principal $G$-bundle with $G$ being $SU(2)$ or $SO(3)$. Suppose that the ASD connections $A\in M(P,g)$ are $strong$ irreducible connections in the sense of Definition \ref{D3}. Then there are positive constants $\mu_{0}=\mu_{0}(P,g)$ and $\varepsilon=\varepsilon(P,g)$ such that
\begin{equation*}
\begin{split}
&\mu(A)\geq\mu_{0},\  \forall [A]\in M(P,g),\\
&\mu(A)\geq\frac{\mu_{0}}{2},\  \forall [A]\in\mathcal{B}_{\varepsilon}(P,g).\\
\end{split}
\end{equation*}
\end{theorem}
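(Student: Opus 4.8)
The plan is to follow, step for step, the argument already used above for the corresponding statement about $\lambda(A)$, which is Feehan's scheme: first extract the pointwise positivity $\mu(A) > 0$ on $M(P,g)$ from the irreducibility that is built into the strong-irreducibility hypothesis, and then upgrade it to a uniform bound by a compactness-and-contradiction argument resting on the continuity of $\mu(\cdot)$ under Uhlenbeck limits. First I would observe that a $strong$ irreducible connection $A$ in the sense of Definition \ref{D3} has $\lambda(A) \geq \lambda_0 > 0$, hence $\ker d_A|_{\Om^0(X,\mathfrak{g}_P)} = 0$, so $A$ is irreducible; consequently Proposition \ref{P3.12} applies (here the hypothesis $G = SU(2)$ or $SO(3)$ is used) and gives $\mu(A) > 0$ for every $[A] \in M(P,g)$. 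What remains is to make these lower bounds uniform.

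For the first inequality, suppose to the contrary that $\inf\{\mu(A) : [A] \in M(P,g)\} = 0$ and choose $\{A_i\} \subset M(P,g)$ with $\mu(A_i) \to 0$. Each $A_i$ is anti-self-dual, so $\|F^+_{A_i}\|_{L^2(X)} = 0 \to 0$, and the Proposition immediately preceding this theorem yields a finite set $\Sigma \subset X$, a subsequence (not relabelled), and gauge transformations $g_i$ with $g_i(A_i) \to A_\infty$ in $L^2_{1,loc}(X \setminus \Sigma)$, where $A_\infty$ is anti-self-dual, and with $\mu(A_i) \to \mu(A_\infty)$; hence $\mu(A_\infty) = 0$. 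By Uhlenbeck's removable-singularity theorem $A_\infty$ extends, modulo gauge, to a smooth anti-self-dual connection on a $G$-bundle over $X$. Passing if necessary to a further subsequence, Proposition \ref{P2.6} gives $\lambda(A_i) \to \lambda(A_\infty)$, and $\lambda(A_i) \geq \lambda_0$ forces $\lambda(A_\infty) \geq \lambda_0 > 0$, so $A_\infty$ is irreducible. Then Proposition \ref{P3.12} gives $\mu(A_\infty) > 0$, contradicting $\mu(A_\infty) = 0$. Thus $\mu_0 := \inf\{\mu(A) : [A] \in M(P,g)\} > 0$.

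For the second inequality I would argue the same way. If no $\varepsilon \in (0,1]$ works, then for each $i$ there is $[A_i] \in \mathcal{B}_{1/i}(P,g)$ with $\mu(A_i) < \mu_0/2$; in particular $\|F^+_{A_i}\|_{L^2(X)} \to 0$, and the same compactness statement produces a subsequence with $g_i(A_i) \to A_\infty$ anti-self-dual over $X \setminus \Sigma$ and $\mu(A_i) \to \mu(A_\infty) \leq \mu_0/2$. As before $A_\infty$ extends across $\Sigma$, and $\lambda(A_\infty) \geq \lambda_0 > 0$ by Proposition \ref{P2.6} together with the uniform bound $\lambda(A_i) \geq \lambda_0/2$ on $\mathcal{B}_\varepsilon(P,g)$ established in the $\lambda$-version of the theorem, so $A_\infty$ is an irreducible anti-self-dual connection; hence $[A_\infty]$ lies in the appropriate moduli space and $\mu(A_\infty) \geq \mu_0$, a contradiction. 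Therefore $\mu(A) \geq \mu_0/2$ for all $[A] \in \mathcal{B}_\varepsilon(P,g)$ once $\varepsilon = \varepsilon(P,g) \in (0,1]$ is chosen small enough.

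The step I expect to be the main obstacle is the bookkeeping of bundle topology across the Uhlenbeck limit: since $M(P,g)$ need not be compact, a priori $A_\infty$ lives on a $G$-bundle $P_\infty$ with $0 < c_2(P_\infty) \leq c_2(P)$ rather than on $P$ itself, so to conclude both that $A_\infty$ is irreducible and that $\mu(A_\infty) \geq \mu_0$ one must know that the strong-irreducibility hypothesis persists on every bundle occurring in the Donaldson--Uhlenbeck compactification $\bar M(P,g)$; this is exactly what is guaranteed when $g$ is $generic$ in the sense of Definition \ref{D1}, together with $w_2(P) \neq 0$ in the $SO(3)$ case, as in Theorem \ref{T4}. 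Two further points have to be checked, but are routine: that $\lambda(\cdot)$ and $\mu(\cdot)$ do not change when the finite, measure-zero set $\Sigma$ is removed, so that the removable-singularity extension leaves both eigenvalues intact; and the $L^2_{1,loc}$-continuity of $\mu$, which is precisely Corollary \ref{C3.9} and the Proposition preceding the theorem. Granting these, the proof is a near-verbatim copy of the one already given for $\lambda(A)$.
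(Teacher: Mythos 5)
Your overall strategy is exactly the paper's (Feehan's scheme): pointwise positivity of $\mu$ from the vanishing result Proposition \ref{P3.12} applied to irreducible ASD connections, upgraded to a uniform bound by Uhlenbeck compactness together with the continuity of $\la(\cdot)$ (Proposition \ref{P2.6}) and of $\mu(\cdot)$ (Corollary \ref{C3.9} and the proposition preceding the theorem). Your first part is sound: for a sequence $A_i\in M(P,g)$ with $\mu(A_i)\to 0$ you only need $\mu(A_\infty)>0$ at the Uhlenbeck limit, and you get irreducibility of $A_\infty$ directly from $\la(A_i)\geq\la_0$ and continuity of $\la$, with no assumption on which bundle $A_\infty$ lives on.

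The gap is in your second part, and you half-diagnose it yourself but then close it the wrong way. You define $\mu_0:=\inf\{\mu(A):[A]\in M(P,g)\}$, yet the Uhlenbeck limit $A_\infty$ of your contradicting sequence in $\mathcal{B}_{\varepsilon}(P,g)$ may sit on a bundle $P_\infty$ with $c_2(P_\infty)<c_2(P)$, so $[A_\infty]$ need not lie in $M(P,g)$ and the asserted inequality $\mu(A_\infty)\geq\mu_0$ is not available; knowing only that $A_\infty$ is irreducible gives $\mu(A_\infty)>0$, which does not contradict $\mu(A_\infty)\leq\mu_0/2$. Your proposed repair — invoking the $generic$ metric and $\w_2(P)\neq 0$ as in Theorem \ref{T4} — imports hypotheses that Theorem \ref{T3.14} does not assume (those belong to the corollaries). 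The paper's route avoids this: one first propagates the bound $\la\geq\la_0$ from $M(P,g)$ to every point of the compactification $\bar{M}(P,g)$ (each such point is an Uhlenbeck limit of connections in $M(P,g)$, so continuity of $\la$ applies), concludes via Proposition \ref{P3.12} that $\mu>0$ at every point of $\bar{M}(P,g)$, and then uses compactness of $\bar{M}(P,g)$ together with the Uhlenbeck-continuity of $\mu$ to extract a uniform $\mu_0$ valid on all of $\bar{M}(P,g)$, lower-charge strata included; with $\mu_0$ defined this way, your contradiction argument for $\mathcal{B}_{\varepsilon}(P,g)$ goes through verbatim. Since you already use the continuity of $\la$ and $\mu$ correctly in part one, the fix is a one-line change of where the infimum defining $\mu_0$ is taken, not an additional hypothesis.
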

\begin{proof}
Combining $\bar{M}(P,g)$ is compact, $\la(A)$ is continuous under the Uhlenbeck topology, $\forall [A]\in\bar{M}(P,g)$ and $A$ is a $strong$ irreducible connection, then any connection $A\in\bar{M}(P,g)$ is $strong$ in the sense of Definition \ref{D3}. Following the Proposition \ref{P3.12} , we then have $\mu(A)\geq\mu_{0}$.
	
Suppose that the constant $\varepsilon\in(0,1]$ does not exist. We may then choose a sequence $\{A_{i}\}_{i\in\N}$ of connection on $P$ such that $\|F^{+}_{A_{i}}\|_{L^{2}(X)}\rightarrow0$ and $\mu(A_{i})\rightarrow0$ as $i\rightarrow\infty$. Since $\lim_{i\rightarrow\infty}\mu(A_{i})=\mu(A_{\infty})$ and $\mu(A_{\infty})\geq\mu_{0}$, then it is contradict to our initial assumption regarding the sequence $\{A_{i}\}_{i\in\N}$. In particular, the preceding argument shows that the desired constant $\varepsilon$ exists.
\end{proof}
\begin{corollary}
Let $X$ be a compact, simply-connected, K\"{a}hler surface with a K\"{a}hler metric $g$, that is $generic$ in the sense of Definition \ref{D1}, $P$ be a $SO(3)$-bundle over $X$. If the second Stiefel-Whitney class, $\w_{2}(P)\in H^{2}(X;\mathbb{Z}/2\mathbb{Z})$, is non-trivial, then there are positive constants $\mu_{0}=\mu_{0}(P,g)$ and $\varepsilon=\varepsilon(P,g)$ such that
\begin{equation*}
\begin{split}
&\mu(A)\geq\mu_{0},\  \forall [A]\in M(P,g),\\
&\mu(A)\geq\frac{\mu_{0}}{2},\  \forall [A]\in\mathcal{B}_{\varepsilon}(P,g).\\
\end{split}
\end{equation*}
\end{corollary}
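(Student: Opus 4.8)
The plan is to observe that this statement is nothing but the specialization of Theorem \ref{T3.14} to the setting in which the strong-irreducibility hypothesis holds automatically, so that the only work is to verify that hypothesis and then quote Theorem \ref{T3.14}.

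First I would recall, from the Lemma immediately preceding Theorem \ref{T4}, that when $g$ is $generic$ and $\w_{2}(P)\in H^{2}(X;\mathbb{Z}/2\mathbb{Z})$ is non-trivial one has $\la(A)>0$ for every $[A]\in\bar{M}(P,g)$: the only reducible anti-self-dual connection on an $SO(3)$-bundle over the simply-connected manifold $X$ is the product connection on $X\times SO(3)$ by \cite[Corollary 4.3.15]{DK}, and that case is excluded by the hypothesis $\w_{2}(P)\neq0$, while reducible anti-self-dual connections on the lower-stratum bundles $\tilde{P}$ with $0<c_{2}(\tilde{P})\leq c_{2}(P)$ are excluded by Definition \ref{D1}. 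Since $\bar{M}(P,g)$ is compact by \cite[Theorem 4.4.3]{DK} and $\la(\cdot)$ is continuous in the Uhlenbeck topology on $\bar{M}(P,g)$ (this is the continuity used in the proof of Theorem \ref{T4}; compare Proposition \ref{P2.6}), the positive function $\la$ attains a positive minimum $\la_{0}=\la_{0}(P,g)>0$ on $\bar{M}(P,g)$, hence on $M(P,g)$. By Definition \ref{D3} this says precisely that every anti-self-dual connection $A\in M(P,g)$ is $strong$ irreducible, which is the conclusion of Theorem \ref{T4}.

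With this in hand I would simply apply Theorem \ref{T3.14} with $G=SO(3)$: its hypothesis, that the anti-self-dual connections in $M(P,g)$ are $strong$ irreducible, has just been checked, so it produces positive constants $\mu_{0}=\mu_{0}(P,g)$ and $\varepsilon=\varepsilon(P,g)$ with $\mu(A)\geq\mu_{0}$ for all $[A]\in M(P,g)$ and $\mu(A)\geq\mu_{0}/2$ for all $[A]\in\mathcal{B}_{\varepsilon}(P,g)$, which is exactly the asserted estimate. I do not expect a real obstacle: all of the analytic content --- the Weitzenb\"{o}ck estimate and the compactness and continuity arguments for $\mu(A)$ in Proposition \ref{P3.12}, Proposition \ref{P3.16} and Corollary \ref{C3.9}, together with the continuity and positivity of $\la$ --- is already in place, and the only thing to be careful about is that the hypotheses of Theorem \ref{T4} and Theorem \ref{T3.14} (a compact simply-connected K\"{a}hler surface $X$, $G=SO(3)$, $g$ generic, $\w_{2}(P)$ non-trivial) match exactly, which they do.
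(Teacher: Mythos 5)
Your proposal is correct and follows essentially the same route the paper intends: the hypotheses ($g$ generic, $\w_{2}(P)\neq0$, $X$ simply-connected) rule out reducible ASD connections as in the Lemma before Theorem \ref{T4}, so by compactness of $\bar{M}(P,g)$ and continuity of $\la(\cdot)$ the connections in $M(P,g)$ are $strong$ irreducible, and the conclusion is then just Theorem \ref{T3.14} applied with $G=SO(3)$. The paper leaves exactly this combination implicit, so there is nothing to add.
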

\begin{proof}[\textbf{Proof of Theorem \ref{T1}}]
For a Yang-Mills connection $A$ on $P$ with $\|F_{A}^{+}\|_{L^{2}(X)}\leq\varepsilon$, where $\varepsilon\in(0,1)$ is as in the hypothesis of Corollary \ref{C2}, then the curvature $F_{A}$ satisfies $\bar{\pa}_{A}^{\ast}F_{A}^{0,2}=0$, $\La_{\w}F_{A}=0$. Following the Definition of $\mu(A)$ and Theorem \ref{T3.14},  we have
$$\frac{\mu_{0}}{2}\|v\|^{2}_{L^{2}(X)}\leq\|\bar{\pa}_{A}^{\ast}v\|^{2}_{L^{2}(X)},\ \forall v\in\tilde{\Om}^{0,2}(X,\mathfrak{g}_{P}^{\C}).$$
where $\mu_{0}$ is the uniform positive lower bound in Theorem \ref{T3.14}. Since $F_{A}^{0,2}$ is  harmonic, $F_{A}^{0,2}=0$ on $X$. Thus we complete this proof.
\end{proof}
Suppose $\phi\in\Om^{0,2}(X,\mathfrak{g}^{\C}_{P})$ takes values in a 1-dimensional subbundle of $\mathfrak{g}_{P}^{\C}$, i.e., suppose that 
$$\phi=f\otimes\sigma,$$ 
where $f$ is a $(0,2)$-form and where $\sigma$ is a section of $\mathfrak{g}_{P}^{\C}$ with $|\sigma|^{2}=1$,
following the idea of Bourguignon-Lawson \cite[Proposition 3.15]{BL}, we then have a useful
\begin{lemma}\label{L2} 
Suppose that the curvature $F_{A}$ of the connection $A$ obeying $\La_{\w}F_{A}=0$. The $\phi$ is harmonic with respect to Laplacian operator $\De_{\bar{\pa}_{A}}$ if only if $f$ is harmonic form and $\sigma$ is parallel aways from the zeros of $f$.
\end{lemma}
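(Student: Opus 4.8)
The plan is to reduce harmonicity of $\phi$ to the single equation $\bar{\pa}^{\ast}_{A}\phi=0$ and then to play off the Weitzenb\"{o}ck formula of Proposition \ref{P3} against its untwisted analogue for the scalar form $f$. Since $X$ is a surface, $\bar{\pa}_{A}\phi\in\Om^{0,3}(X,\mathfrak{g}_{P}^{\C})=0$, so $\De_{\bar{\pa}_{A}}\phi=\bar{\pa}_{A}\bar{\pa}^{\ast}_{A}\phi$; pairing with $\phi$ over the compact surface $X$ shows that $\phi$ is harmonic if and only if $\bar{\pa}^{\ast}_{A}\phi=0$, and likewise $f$ is harmonic if and only if $\bar{\pa}^{\ast}f=0$. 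Because $\La_{\w}F_{A}=0$, the bracket term in Proposition \ref{P3} drops; integrating over $X$ and using $\bar{\pa}_{A}\phi=0$ gives
$$\|\bar{\pa}^{\ast}_{A}\phi\|^{2}_{L^{2}(X)}=\|\na_{A}\phi\|^{2}_{L^{2}(X)}+\int_{X}2S|\phi|^{2}dvol_{g},$$
and the corresponding scalar identity $\|\bar{\pa}^{\ast}f\|^{2}_{L^{2}(X)}=\|\na f\|^{2}_{L^{2}(X)}+\int_{X}2S|f|^{2}dvol_{g}$ holds for $f$. Since $|\sigma|^{2}=1$ forces $|\phi|^{2}=|f|^{2}$ pointwise, subtracting the two identities cancels the curvature term and yields
$$\|\bar{\pa}^{\ast}_{A}\phi\|^{2}_{L^{2}(X)}-\|\bar{\pa}^{\ast}f\|^{2}_{L^{2}(X)}=\|\na_{A}\phi\|^{2}_{L^{2}(X)}-\|\na f\|^{2}_{L^{2}(X)}.$$

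Next I would evaluate the right-hand side on the open set $U:=\{f\neq0\}$, where the splitting $\phi=f\otimes\sigma$ is genuine; off $U$ the common zero set $\{f=0\}=\{\phi=0\}$ is a null set by unique continuation (for the harmonic $f$ in one direction, for the $\De_{\bar{\pa}_{A}}$-harmonic $\phi$ in the other), so it is irrelevant to the integrals. On $U$ we have $\na_{A}\phi=\na f\otimes\sigma+f\otimes\na_{A}\sigma$, and the cross term $2\,\mathrm{Re}\langle\na f\otimes\sigma,f\otimes\na_{A}\sigma\rangle$ vanishes pointwise because $\sigma$ is a unit section; hence $|\na_{A}\phi|^{2}=|\na f|^{2}+|f|^{2}|\na_{A}\sigma|^{2}$ on $U$, and so $\|\na_{A}\phi\|^{2}_{L^{2}(X)}-\|\na f\|^{2}_{L^{2}(X)}=\int_{U}|f|^{2}|\na_{A}\sigma|^{2}dvol_{g}\ge0$. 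For the ``if'' direction, if $f$ is harmonic and $\na_{A}\sigma=0$ on $U$, then on the open set $U$ one computes directly $\na^{\ast}_{A}\na_{A}\phi=(\na^{\ast}\na f)\otimes\sigma=-2S\phi$, whence $\De_{\bar{\pa}_{A}}\phi=0$ on $U$ and therefore on all of $X$ by continuity and density, so $\phi$ is harmonic. For the ``only if'' direction, if $\phi$ is harmonic then the left-hand side of the subtracted identity equals $-\|\bar{\pa}^{\ast}f\|^{2}_{L^{2}(X)}\le0$, which together with the nonnegativity just established forces both $\|\bar{\pa}^{\ast}f\|^{2}_{L^{2}(X)}=0$ and $\int_{U}|f|^{2}|\na_{A}\sigma|^{2}dvol_{g}=0$; that is, $f$ is harmonic and $\na_{A}\sigma=0$ on $\{f\neq0\}$.

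I expect the point needing the most care to be the pointwise identity $|\na_{A}\phi|^{2}=|\na f|^{2}+|f|^{2}|\na_{A}\sigma|^{2}$ on $U$, i.e. the vanishing of the cross term. This genuinely uses that $\sigma$ is a \emph{unit} section, so that $\na_{A}$ is metrically orthogonal to $\sigma$; it is cleanest to choose the splitting so that $\sigma$ takes values in the real form $\mathfrak{g}_{P}$, exactly as in Theorem \ref{T3.12}, in which case $\langle\sigma,\na_{A}\sigma\rangle=\tfrac12 d|\sigma|^{2}=0$ on the nose, while for a merely complex unit section one must first absorb the residual purely imaginary one-form $\langle\sigma,\na_{A}\sigma\rangle$ into the phase of $f$ before the comparison is valid. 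A secondary technical point is the unique continuation step used to pass from identities on the dense open set $\{f\neq0\}$ to identities on all of $X$.
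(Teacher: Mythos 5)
Your proposal is correct and follows essentially the same route as the paper's: both rest on the Weitzenb\"{o}ck formula of Proposition \ref{P3} (with the bracket term killed by $\La_{\w}F_{A}=0$) compared against its untwisted analogue for $f$, together with the pointwise consequence of $|\sigma|^{2}=1$ that $|\na_{A}\phi|^{2}=|\na f|^{2}+|f|^{2}|\na_{A}\sigma|^{2}$, followed by a positivity argument. The differences are organizational rather than substantive: you subtract two integrated identities where the paper integrates the single pointwise identity $\langle\De f,f\rangle+|f|^{2}|\na_{A}\sigma|^{2}=0$, and you additionally spell out the ``if'' direction and the Hermitian cross-term point $\langle\sigma,\na_{A}\sigma\rangle$, both of which the paper's proof leaves implicit.
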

\begin{proof}
	The Weizenb\"{o}ck formula for any $\phi\in\Om^{0,2}(X,\mathfrak{g}^{\C}_{P})$ yields, see (\ref{E14})
	$$\na_{A}^{\ast}\na_{A}\phi+ 2S\phi=0.$$
	A direct computation shows that
	$$\na_{A}^{\ast}\na_{A}\phi=(\na^{\ast}\na f)\otimes\sigma-\sum_{j}(\na_{e_{j}}f)\otimes(\na_{A_{e_{j}}}\sigma)+f\otimes(\na_{A}^{\ast}\na_{A}\sigma),$$
	and
	$$\langle S\phi,\phi\rangle=\langle Sf,f\rangle.$$
	Taking the derivative of the condition $|\sigma|^{2}=1$,  we find that $\langle\na_{A}\sigma,\sigma\rangle=0$.  Consequently,
	$$\langle\na_{A}^{\ast}\na_{A}\sigma,\sigma\rangle\equiv-\langle\na_{A_{e_{i},e_{i}}}^{2}\sigma,\sigma\rangle\equiv\sum_{i}\langle\na_{A_{e_{i}}}\sigma,\na_{A_{e_{i}}}\sigma\rangle\equiv|\na_{A}\sigma|^{2}.$$
	We then have
	\begin{equation}\label{E11}
	\langle\na_{A}^{\ast}\na_{A}\phi,\phi\rangle=\langle\na^{\ast}\na f,f\rangle+|f|^{2}|\na_{A}\sigma|^{2}.
	\end{equation}
	The Weitzenb\"{o}ck formula applied to $(0,2)$-forms on $\Om^{0,2}(X)$ , states that 
	$$(d^{\ast}d+dd^{\ast})f=\na^{\ast}\na f+2Sf.$$ 
	Therefore (\ref{E11}) can be rewritten as
	$$\langle\De f,f \rangle+|f|^{2}|\na_{A}\sigma|^{2}=0.$$
	Since $\De\geq 0$ on $X$ we conclude that $\De f=0$ and that $\na_{A}\sigma=0$ away from the zero of $f$. We complete the proof this lemma.
\end{proof}
We apply the proof of Lemma \ref{L4} to ASD connections for group $S^{1}$. If $A$ is an ASD $S^{1}$-connection which is flat in the some ball, then in a radial gauge the connection matrix vanishes over the ball and we deduce that $A$ must be flat everywhere. This is a local argument, so applies to any closed ASD $2$-form. Of course, we have just the same results for self-dual forms. We obtain then:
\begin{corollary}\label{C3}(\cite{DK} Corollary 4.3.23)
Suppose $\w$ is a closed two-form on $X$ which satisfies $\ast\w=\pm\w$. Then if $\w$ vanishes on a non-empty open set in $X$ it is identically zero.
\end{corollary}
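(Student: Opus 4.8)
The plan is to recognise this as the abelian ($S^{1}$) shadow of the unique continuation argument behind Lemma \ref{L4} and to reduce it to a strong unique continuation statement. First I would record that $\w$ is harmonic: $d\w=0$ is one of the hypotheses, while $\ast\w=\pm\w$ gives $d^{\ast}\w=-\ast d\ast\w=\mp\ast d\w=0$, so $\De_{d}\w=(dd^{\ast}+d^{\ast}d)\w=0$, where $\De_{d}$ is the Hodge Laplacian on $\Om^{2}(X)$, a second order operator whose principal symbol is the scalar $|\xi|^{2}\,\mathrm{Id}$.

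For the connection-theoretic route, I would fix a point $x_{0}$ in the given non-empty open set $U$ on which $\w\equiv0$ and a small geodesic ball $B\subset U$. On any simply-connected chart, closedness of $\w$ lets us write $\w=dA$ for a real $1$-form $A$, to be regarded as the connection form of an $S^{1}$-connection; the relation $\ast\w=\pm\w$ says precisely that $(dA)^{\mp}=0$, i.e.\ $A$ is an (anti-)self-dual $S^{1}$-connection. On $B$ its curvature $\w=dA$ vanishes, so after a gauge change we may put $A$ in radial gauge about $x_{0}$, in which the connection form is recovered from the curvature by integration along rays; hence $A\equiv0$ on $B$. Now $A$ solves the linear system $(dA)^{\mp}=0$, $d^{\ast}A=0$ (the second equation imposed as a gauge condition), which is first-order elliptic with scalar principal symbol, so Aronszajn's unique continuation theorem forces $A$ to be flat on the whole connected component, and thus $\w=dA\equiv0$ there. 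Since this is entirely local, exactly the same conclusion follows directly for $\w$ by applying Aronszajn's theorem to $\De_{d}\w=0$: the zero set of $\w$, having non-empty interior, must be all of $X$ (connected, as in all the cases of interest here, in particular under the simple-connectivity hypotheses used elsewhere in the paper).

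The main obstacle — really the only substantive point — is justifying the unique continuation step. For the second-order formulation one checks that $\De_{d}$ has scalar principal symbol $|\xi|^{2}\,\mathrm{Id}$ and locally Lipschitz lower-order terms, which places it in the scope of the Aronszajn--Cordes strong unique continuation theorem on a smooth Riemannian manifold; for the first-order formulation one verifies that $A\mapsto\bigl((dA)^{\mp},\,d^{\ast}A\bigr)$ is elliptic with scalar symbol, which is the abelian specialisation of the unique continuation property used in the proof of Lemma \ref{L4}. The argument is insensitive to the choice of sign in $\ast\w=\pm\w$, so the self-dual and anti-self-dual cases are handled identically, and the same reasoning applies verbatim to any closed self-dual or anti-self-dual form, as asserted.
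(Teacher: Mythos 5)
Your proposal is correct, and its first (connection-theoretic) route is essentially the argument the paper itself gives, following Donaldson--Kronheimer Corollary 4.3.23: interpret the closed (anti-)self-dual form locally as the curvature of an $S^{1}$-connection, observe that in radial gauge the connection matrix vanishes on a ball where the curvature does, invoke unique continuation for ASD connections (the mechanism behind Lemma \ref{L4}), and note the argument is purely local. Two remarks on how you implement that route: the radial gauge that gives $A\equiv 0$ on the small ball and the Coulomb condition $d^{\ast}A=0$ are \emph{different} gauge choices, so you should not impose both simultaneously without an intermediate step (DK's unique continuation, their Lemma 4.3.21, is in fact run in radial gauge via an Agmon--Nirenberg-type differential inequality, not via Aronszajn in Coulomb gauge); and the local potentials $A$ exist only chart by chart, so the passage from ``flat near $x_{0}$'' to ``flat on the connected component'' should be phrased as an open-and-closed argument for the interior of the zero set of $\omega$ rather than as a statement about a single global $A$. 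Your second route --- note that $d\omega=0$ and $\ast\omega=\pm\omega$ force $d^{\ast}\omega=0$, hence $\omega$ is harmonic, then apply Aronszajn's strong unique continuation to the Hodge Laplacian (scalar principal symbol, Weitzenb\"ock lower-order terms) acting on $\omega$ directly --- is genuinely different from the paper's sketch and is cleaner: it avoids all gauge choices and the chart-to-chart bookkeeping, at the cost of citing Aronszajn rather than the gauge-theoretic unique continuation already quoted in the paper. Either route proves the corollary; the second also immediately yields the harmonic-form statement recorded afterwards as Proposition \ref{P3.24}.
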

Furthermore, if $\w$ is a harmonic two from on $X$, then $\w+\ast\w$ or $\w-\ast\w$ is self-dual or ASD closed two-from. Then if $\w\pm\ast\w$ all vanish on a non-empty open set in $X$ it is identically zero, i.e., $\w$ is identically zero.
\begin{proposition}\label{P3.24}
Suppose that $\w$ is a smooth harmonic $2$-form on a closed, simply-connected, four-manifold $X$. Then if $\w$ vanishes on a non-empty open set in $X$ it is identically zero.
\end{proposition}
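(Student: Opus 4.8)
The plan is to split $\w$ into its self-dual and anti-self-dual pieces and apply Corollary \ref{C3} to each. First I would recall that on a closed, oriented four-manifold the Hodge star is an involution on $\Om^{2}$ with $\pm1$-eigenspaces $\Om^{2,\pm}$, and that $\w$ being harmonic is equivalent to $d\w=0$ together with $d^{\ast}\w=0$; since $d^{\ast}=-\ast d\ast$ on $2$-forms in dimension four, the second condition is the same as $d(\ast\w)=0$. Hence both $\w$ and $\ast\w$ are closed, and therefore so are
$$\w^{+}:=\tfrac12(\w+\ast\w),\qquad \w^{-}:=\tfrac12(\w-\ast\w),$$
which satisfy $\ast\w^{\pm}=\pm\w^{\pm}$ and $\w=\w^{+}+\w^{-}$.

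Next, let $U\subset X$ be the non-empty open set on which $\w$ vanishes. Since the Hodge star is a pointwise (algebraic) operator, $\ast\w$ also vanishes on $U$, so $\w^{+}$ and $\w^{-}$ both vanish on $U$ as well. Applying Corollary \ref{C3} to the closed self-dual form $\w^{+}$, and separately to the closed anti-self-dual form $\w^{-}$, yields $\w^{+}\equiv0$ and $\w^{-}\equiv0$ on $X$, and therefore $\w=\w^{+}+\w^{-}\equiv0$.

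The only substantive input is Corollary \ref{C3}, that is, the unique continuation property for closed self-dual or anti-self-dual $2$-forms. This is the abelian ($S^{1}$) case of the ASD unique-continuation argument already exploited in the proof of Lemma \ref{L4}: in a radial gauge about an interior point such a form satisfies a first-order, constant-coefficient elliptic system, so vanishing on an open set forces vanishing on a dense set and hence everywhere by continuity. With that in hand the remaining steps are purely algebraic and present no obstacle; the simple-connectivity hypothesis is carried over from the surrounding discussion and is not used in this argument.
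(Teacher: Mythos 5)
Your proof is correct and is essentially the same argument the paper gives: decompose the harmonic form as $\w=\tfrac12(\w+\ast\w)+\tfrac12(\w-\ast\w)$, note both pieces are closed (anti-)self-dual forms vanishing on the open set since $\ast$ is pointwise, and apply Corollary \ref{C3} to each. Your additional remarks (that harmonicity gives $d\w=0$ and $d(\ast\w)=0$, and that simple-connectivity is not actually needed) are accurate and only make explicit what the paper leaves implicit.
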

We then have
\begin{theorem}
Let $X$ be a compact, simply-connected, K\"{a}hler surface, $P$ be a  principal $G$-bundle over $X$ with $G$ being a compact, semisimple  Lie group, $A$ be an irreducible connection on $P$. If the curvature $F_{A}$ of the connection $A$ obeying $\La_{\w}F_{A}=0$. Then 
the harmonic forms take values in a 1-dimensional subbundle of $\mathfrak{g}_{P}^{\C}$ on $\Om^{0,2}(X,\mathfrak{g}_{P}^{\C})$ with respect to Laplacian operator $\De_{\bar{\pa}_{A}}$ are zero.
\end{theorem}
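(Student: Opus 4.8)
The plan is to deduce the statement from Lemma \ref{L2}, the unique continuation Proposition \ref{P3.24}, and the characterization of irreducibility in Remark \ref{R2.5}. Let $\phi\in\Om^{0,2}(X,\mathfrak{g}_{P}^{\C})$ be $\De_{\bar{\pa}_{A}}$-harmonic and take values in a $1$-dimensional subbundle, so that $\phi=f\otimes\sigma$ with $f$ a $(0,2)$-form and $\sigma$ a global section of $\mathfrak{g}_{P}^{\C}$ with $|\sigma|^{2}\equiv 1$. Since $\La_{\w}F_{A}=0$ by hypothesis, Lemma \ref{L2} applies and gives that $f$ is a harmonic form and that $\na_{A}\sigma=0$ on the complement of the zero set $Z:=f^{-1}(0)$.

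Then I would argue by contradiction: suppose $\phi\not\equiv 0$, so $f\not\equiv 0$. Writing $f=f_{1}+\sqrt{-1}f_{2}$ with $f_{1},f_{2}$ real harmonic $2$-forms, not both identically zero, Proposition \ref{P3.24} forbids either of them — hence $f$ — from vanishing on a nonempty open subset of $X$; therefore $Z$ has empty interior and $X\setminus Z$ is open and dense. On this dense open set $\na_{A}\sigma=0$, and since $\sigma$ is a globally defined smooth section, $\na_{A}\sigma$ is continuous on all of $X$, hence vanishes identically. Thus $\sigma$ is a nowhere-vanishing covariantly constant section, i.e. $0\neq\sigma\in\ker d_{A}|_{\Om^{0}(X,\mathfrak{g}_{P}^{\C})}$, which contradicts the irreducibility of $A$ (Remark \ref{R2.5}). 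Hence $\phi\equiv 0$.

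The only genuinely nontrivial input is the unique continuation step used to make $Z$ negligible: the pointwise identity $f\,|\na_{A}\sigma|^{2}=0$ coming out of Lemma \ref{L2} forces $\na_{A}\sigma=0$ only off $Z$, so one must know that a nonzero harmonic $2$-form cannot vanish on an open set — which is precisely Proposition \ref{P3.24}, itself obtained from Corollary \ref{C3} by writing $f\pm *f$ as a closed (anti-)self-dual $2$-form and running the radial-gauge flatness argument adapted from Lemma \ref{L4}. Once that is granted everything else is a soft continuity-plus-irreducibility argument; in particular the reasoning makes no use of the rank or internal structure of $G$, so it holds for any compact semisimple $G$, in contrast with the $SU(2)/SO(3)$ statement of Corollary \ref{T2}.
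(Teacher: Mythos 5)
Your proposal is correct and uses essentially the same ingredients and structure as the paper's proof: Lemma \ref{L2} to get $f$ harmonic and $\na_{A}\sigma=0$ off the zero set of $f$, unique continuation for harmonic $2$-forms (Proposition \ref{P3.24}) to control that zero set, and irreducibility via Remark \ref{R2.5}. The only differences are cosmetic — you run the argument contrapositively (dense vanishing of $\na_{A}\sigma$ contradicting irreducibility) and split $f$ into real and imaginary parts, whereas the paper uses irreducibility to produce an open ball inside $\{f=0\}$ and applies unique continuation directly to $f$ and $\ast f$.
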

\begin{proof}
We set $\phi:=f\otimes\sigma$ for any section $\phi$ on $\Om^{0,2}(X,\mathfrak{g}_{P}^{\C})$. We suppose $\phi$ is harmonic with respect to $\De_{\bar{\pa}_{A}}$, then $|f||\na_{A}\sigma|=0$ and $df=d^{\ast}f=0$. We denote a closed set 
$$Z:=\{x\in X:f(x)=0\}\subset X$$ 
by the the zero of harmonic form $f$, i.e. the zero of $\phi$. We then have $\na_{A}\sigma=0$ along $X\backslash Z$, thus the set $$\tilde{Z}:=\{x\in X:\na_{A}\sigma\neq 0\}\subset Z.$$
Since the connection $A$ is irreducible, $\|\na_{A}\sigma\|_{L^{2}(X)}>0$, (See Remark \ref{R2.5}), thus the set $\tilde{Z}$ is non-empty. We could choose a point $p\in\tilde{Z}$ such that $\na_{A}\sigma\neq0$. Then there is a  constant $\rho$ such that the geodesic ball $B_{\rho}(p)\subset\tilde{Z}\subset Z$. Since $f$ is harmonic $(0,2)$-from and $f(x)=0$ for any $x\in B_{\rho}(p)$, $\ast f$ is a harmonic $(2,0)$-from and $\ast f$ also vanishes on $B_{\rho}(p)$.
Following Proposition \ref{P3.24}, $f$ is identically zero. Therefore $Z=X$, so $\phi$ is identically zero.
\end{proof}
\section*{Acknowledgment}
We would like to thank Feehan for kind comments regarding his article \cite{Feehan}. This work was partially supported by Nature Science Foundation of China No. 11801539.

\bigskip
\footnotesize

\end{document}